\DeclareSymbolFont{cyrletters}{OT2}{wncyr}{m}{n}
\DeclareMathSymbol{\Sha}{\mathalpha}{cyrletters}{"58}
\newcommand{\ba}{\begin{align*}}
\newcommand{\ea}{\end{align*}}
\newcommand{\C}{\ensuremath{{\mathbb{C}}}}
\newcommand{\Z}{\ensuremath{{\mathbb{Z}}}\xspace}
\renewcommand{\P}{\ensuremath{{\mathbb{P}}}}
\newcommand{\Q}{\ensuremath{{\mathbb{Q}}}}
\newcommand{\R}{\ensuremath{{\mathbb{R}}}}
\newcommand{\F}{\ensuremath{{\mathbb{F}}}}
\newcommand{\G}{\ensuremath{{\mathbb{G}}}}
\newcommand{\E}{\ensuremath{{\mathbb{E}}}}
\newcommand{\ra}{\rightarrow}
\newcommand\Hom{\operatorname{Hom}}
\newcommand\Aut{\operatorname{Aut}}
\newcommand\im{\operatorname{im}}
\newcommand\Gal{\operatorname{Gal}}
\newcommand\Nm{\operatorname{Nm}}
\newcommand\Sur{\operatorname{Sur}}
\newcommand\Ind{\operatorname{Ind}}
\newcommand\tensor{\otimes}
\newcommand\isom{\simeq}
\newcommand\sub{\subset}
\newcommand\tesnor{\otimes}
\newcommand\Disc{\operatorname{Disc}}
\newcommand\tr{\operatorname{tr}}
\renewcommand\O{\mathcal{O}}
\newcommand\bq{\begin{equation}}
\newcommand\eq{\end{equation}}
\numberwithin{equation}{section}
\newtheorem{proposition}[equation]{Proposition}
\newtheorem{theorem}[equation]{Theorem}
\newtheorem{corollary}[equation]{Corollary}
\newtheorem{example}[equation]{Example}
\newtheorem{lemma}[equation]{Lemma}
\newtheorem{conjecture}[equation]{Conjecture}
\theoremstyle{remark}
\newtheorem{remark}[equation]{Remark}
\newenvironment{definition}{\vspace{2 ex}{\noindent{\bf Definition. }}}{\vspace{2 ex}}
\newtheorem{nts}{Note to self}
\newcommand{\melanie}[1]{{\color{blue} \sf $\clubsuit\clubsuit\clubsuit$ Melanie: [#1]}}
\newcommand\cut[1]{}
\newcommand\further[1]{}
\newcommand\Cl{\operatorname{Cl}}
\newcommand\fO{\mathfrak{O}}
\newcommand\fo{\mathfrak{o}}
\newcommand\so{{\scriptstyle\mathcal{O}}}
\newcommand\fp{\mathfrak{p}}
\newcommand\fP{\mathfrak{P}}
\newcommand\uu{\underline{u}}
\newcommand\uv{\underline{v}}
\newcommand\us{\underline{s}}
\newcommand\ui{\underline{\infty}}
\title{Moments and interpretations of the {C}ohen-{L}enstra-{M}artinet heuristics}
\author{Weitong Wang}
\address{Department of Mathematics\\
University of California, Berkeley\\
970 Evans Hall \# 3840\\
Berkeley, CA 94720-3840 USA}  
\email{
tom\_wang@berkeley.edu}
\author{Melanie Matchett Wood}
\address{Department of Mathematics\\
University of California, Berkeley\\
970 Evans Hall \# 3840\\
Berkeley, CA 94720-3840 USA}  
\email{mmwood@berkeley.edu}
\begin{document}
\begin{abstract}
The goal of this paper is to prove theorems that elucidate the Cohen-Lenstra-Martinet conjectures for the distributions of class groups of number fields, and  further the understanding of their implications.
We start by giving a  simpler statement of the  conjectures.  We show that the probabilities that arise are inversely proportional  to the number of automorphisms of structures slightly larger than the class groups.  We find the moments of the Cohen-Lenstra-Martinet distributions and prove that the distributions are determined by their moments.  In order to apply these conjectures to class groups of non-Galois fields, we prove a new theorem on the capitulation kernel (of ideal classes that become trivial in a larger field) to relate the class groups of non-Galois fields to the class groups of Galois fields.  We then construct an integral model of the Hecke algebra of a finite group, show that it acts naturally on class groups of non-Galois fields, and prove that the Cohen-Lenstra-Martinet conjectures predict a distribution  for class groups of non-Galois fields that involves the inverse of the number of automorphisms of the class group as a Hecke-module.  
\end{abstract}

\maketitle

\section{Introduction}

In this paper we prove several results to help elucidate the Cohen-Lenstra-Martinet conjectures \cite{CL84, CM90} for the distributions of class groups of number fields, and to further the understanding of their implications.
In Section~\ref{S:C1}, we explain the statement of the conjectures in the framework of probability theory. In Section~\ref{S:u}, we prove a result about  the terms appearing in the Cohen-Lenstra-Martinet probabilities, so that we have the following statement. (See Conjecture~\ref{C:Galois} and Theorem~\ref{thm:u in Cohen-Martinet} for precise statements.)

\begin{theorem}\label{T:CM}
For every finite group $\Gamma$ and subgroup $\Gamma_\infty$, among Galois number fields $K$ with isomorphism $\Gal(K/\Q)\isom \Gamma$ (i.e. \emph{$\Gamma$-fields}) and decomposition group $\Gamma_\infty$ at $\infty$, the Cohen-Lenstra-Martinet conjectures predict that 
$\Cl_K\tensor_{\Z} \Z[|\Gamma|^{-1}] $ is isomorphic to
$$
H \text{ with probability  } \frac{c}{|H^{\Gamma_\infty}||\Aut_\Gamma(H)|},
$$
where $\Cl_K$ is the class group of $K$,  and $c$ is a constant, and $H$ is any finite $\Z[|\Gamma|^{-1},\Gamma]$-module with $H^\Gamma=1$.
\end{theorem}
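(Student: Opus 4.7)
The plan is to reduce Theorem~\ref{T:CM} to an explicit evaluation of the infinity factor that appears in the original Cohen-Martinet statement. As set up in Section~\ref{S:C1} and formalized in Conjecture~\ref{C:Galois}, the prediction has the shape
\[
\Prob\bigl(\Cl_K\tensor_\Z\Z[|\Gamma|^{-1}]\isom H\bigr)\;=\;\frac{c\cdot u(H)}{|\Aut_\Gamma(H)|},
\]
where $u(H)$ is an auxiliary weight encoding the local data at the archimedean places and depends only on $H$ as a $\Z[|\Gamma|^{-1},\Gamma]$-module. Theorem~\ref{T:CM} is then equivalent to the identification $u(H)=c'/|H^{\Gamma_\infty}|$ for some constant $c'$ depending only on $(\Gamma,\Gamma_\infty)$, which is stated separately as Theorem~\ref{thm:u in Cohen-Martinet}; the $H$-independent constants $c$ and $c'$ can then be absorbed into a single normalization.

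First I would reduce prime by prime. The finite module $H$ factors as $\bigoplus_{p\nmid|\Gamma|}H_p$, and both the invariants $|H^{\Gamma_\infty}|$ and the group $|\Aut_\Gamma(H)|$ factor multiplicatively along this decomposition; the Cohen-Martinet weight $u$ is likewise a product of local factors $u_p(H_p)$. For $p\nmid|\Gamma|$ the group ring $\Z_p[\Gamma]$ is a maximal order in the semisimple algebra $\Q_p[\Gamma]$, so the category of finite $\Z_p[\Gamma]$-modules splits cleanly into isotypic blocks indexed by the irreducible $\Q_p[\Gamma]$-representations. This semisimplicity is what makes the following manipulations work uniformly across primes.

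Next I would unpack the definition of $u_p$ given in \cite{CM90}. It is built from a canonical ``initial module'' $M_\infty$ coming from archimedean unit considerations, together with a density of surjective $\Z_p[\Gamma]$-homomorphisms $M_\infty\twoheadrightarrow H_p$. The key representation-theoretic step is to identify $M_\infty$ as the induced permutation module $\Ind_{\Gamma_\infty}^{\Gamma}\Z_p$, which reflects the fact that the archimedean completions of a $\Gamma$-field $K$ form a torsor for the coset space $\Gamma/\Gamma_\infty$. Once this identification is in place, Frobenius reciprocity gives
\[
\Hom_{\Z_p[\Gamma]}\bigl(\Ind_{\Gamma_\infty}^{\Gamma}\Z_p,\,H_p\bigr)\;\cong\;\Hom_{\Z_p[\Gamma_\infty]}(\Z_p,H_p)\;=\;H_p^{\Gamma_\infty},
\]
and a short isotypic count converts the surjection density defining $u_p(H_p)$ into this homomorphism count, up to factors independent of $H_p$.

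The main obstacle will be the bookkeeping of normalizations and the passage from local to global. The Cohen-Martinet weight is defined as a limit of densities involving infinite products over primes, and one must verify that all $H$-independent contributions collected across the primes $p\nmid|\Gamma|$ converge and can be absorbed into a single global $c$. In particular, translating between surjection counts and homomorphism counts produces correction factors indexed by the isotypic decomposition of $H_p$, and checking that these cancel cleanly against the corresponding pieces of $|\Aut_\Gamma(H)|$, leaving only the desired factor $|H^{\Gamma_\infty}|$, is the most delicate step.
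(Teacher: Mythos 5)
Your overall plan is right: the theorem is indeed Conjecture~\ref{C:Galois} combined with Theorem~\ref{thm:u in Cohen-Martinet}, and the content is the identification $|H|^{\uu}=|H^{\Gamma_\infty}|$ (over base field $\Q$). You have also correctly located the relevant permutation module: since $\chi_K=-1+\Ind_{\Gamma_\infty}^{\Gamma}1_{\Gamma_\infty}=a_{\Gamma/\Gamma_\infty}$ over $\Q$, Frobenius reciprocity is exactly what links the rank data to $\Gamma_\infty$-invariants.

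However, there are two problems. First, you misstate what $\uu$ is: by Definition~\eqref{E:defrank}, $u_i=\frac{1}{h_i}\langle\chi_K,\varphi_i\rangle$ is a tuple of rational numbers defined by a character pairing, not ``a density of surjective homomorphisms from an initial module.'' That interpretation is a consequence (via the moments formula $\E|\Sur_\fO(X,H)|=|H|^{-\uu}$ of Theorem~\ref{thm: formula for moments}), and invoking it here would make your argument circular relative to the way the paper is organized. Second, and more seriously, the bridge you call ``a short isotypic count'' is the entire content of the theorem and you have not supplied it. Frobenius reciprocity gives $|\Hom_{\Z_p[\Gamma]}(\Ind_{\Gamma_\infty}^\Gamma\Z_p,H_p)|=|H_p^{\Gamma_\infty}|$, but the quantity you need is $|H|^{\uu}=\prod_i|e_iH|^{u_i}$, and there is no a priori reason these should agree. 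The paper closes this gap with \cite[Theorem 7.3]{CM90}, which says that for $H$ lying in the $e_i$-block there is an abelian group $G_v$ with $e_iH\cong G_v^{\langle\chi_i,a_\Gamma\rangle}$ and $(e_iH)^{\Gamma_v}\cong G_v^{\langle\chi_i,a_{\Gamma/\Gamma_v}\rangle}$ as abelian groups, giving $|H^{\Gamma_v}|=|H|^{\langle\chi_i,a_{\Gamma/\Gamma_v}\rangle/\langle\chi_i,a_\Gamma\rangle}$; one then combines this with $\chi_K=-1+\sum_v(a_{\Gamma/\Gamma_v}+1)$ and a Schur-index computation. Without citing or reproving that structural fact about $\Z_S[\Gamma]$-modules, your sketch does not establish the identity, and the ``bookkeeping of normalizations'' you flag as the main obstacle is not where the actual difficulty lies.
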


The original philosophy of the Cohen-Lenstra-Martinet conjectures, going back to Cohen and Lenstra \cite{CL84}, is that objects should appear with frequency inversely proportional to their number of automorphisms.  So we naturally ask why there is an
$|H^{\Gamma_\infty}|$ term in the above predictions. In Section~\ref{S:getaut}, we slightly enlargen the class group to the Galois group over $\Q$ of the Hilbert class field of $K$, with the data of a decomposition group at $\infty$.  We then see that for this slightly larger structure, which we call a \emph{class triple} and is determined by the class group and decomposition group, the number of automorphisms of this structure is exactly $|H^{\Gamma_\infty}||\Aut_\Gamma(H)|$, explaining the probabilities above.  Bartel and Lenstra \cite{Bartel2018} have given a different approach to this question by giving conjectures about the distribution of Arakelov class groups based on those groups appearing with frequency inversely proportional to their number of automorphisms (which takes some work to make precise, see \cite{Bartel2015}).  Their predicted distribution on Arakelov class groups then pushes forward to the Cohen-Lenstra-Martinet distribution, over any base number field.

In Section~\ref{S:moments}, we determine the moments, which are important averages of the Cohen-Lenstra-Martinet distributions on finite abelian $\Gamma$-modules. 

\begin{theorem}[Moments]\label{T:Mom} For every finite group $\Gamma$ and subgroup $\Gamma_\infty$, if $X$ is a random $\Z[|\Gamma|^{-1},\Gamma]$-module with the Cohen-Lenstra-Martinet distribution for $\Gamma$-fields with decomposition group $\Gamma_\infty$ at $\infty$, then 
for every finite $\Z[|\Gamma|^{-1},\Gamma]$-module $H$ with $H^{\Gamma}=1$,
we have the \emph{$H$-moment} of $X$ is
$$
\E(|\Sur_\Gamma(X,H)|)=|H^{\Gamma_\infty}|^{-1}.
$$
\end{theorem}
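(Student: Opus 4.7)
By Theorem~\ref{T:CM}, $\Prob(X\isom G) = c/(|G^{\Gamma_\infty}||\Aut_\Gamma(G)|)$ with normalizing constant $c^{-1} = \sum_{[G]} (|G^{\Gamma_\infty}||\Aut_\Gamma(G)|)^{-1}$ (summed over iso classes of finite $\Z[|\Gamma|^{-1},\Gamma]$-modules with $G^\Gamma=1$), so the desired formula is equivalent to
\[
\sum_{[G]} \frac{|\Sur_\Gamma(G,H)|}{|G^{\Gamma_\infty}||\Aut_\Gamma(G)|} = \frac{1}{|H^{\Gamma_\infty}|}\sum_{[G]} \frac{1}{|G^{\Gamma_\infty}||\Aut_\Gamma(G)|}.
\]
I plan to prove this by reorganizing the left-hand sum according to the kernel $K=\ker\phi$ of each surjection $\phi\colon G\twoheadrightarrow H$.

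A standard orbit-stabilizer argument (with $\Aut_\Gamma(G)$ acting on $\Sur_\Gamma(G,H)$ by precomposition) gives
\[
\sum_{[G]}\frac{|\Sur_\Gamma(G,H)|}{|\Aut_\Gamma(G)|} = \sum_{[(G,\phi)]} \frac{1}{|\Aut(G,\phi)|},
\]
where $\Aut(G,\phi)=\{\alpha\in\Aut_\Gamma(G): \phi\alpha=\phi\}$, summed over iso classes of pairs. Each pair gives a short exact sequence $0\to K\to G\to H\to 0$, and with $H$ fixed these are classified by the iso class $[K]$ together with an $\Aut_\Gamma(K)$-orbit on $\operatorname{Ext}^1_\Gamma(H,K)$. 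Since $|\Gamma|$ is a unit on every finite $\Z[|\Gamma|^{-1},\Gamma]$-module, $H^i(\Gamma,-)=0=H^i(\Gamma_\infty,-)$ for $i\ge 1$; the long exact sequence of invariants then yields $G^\Gamma=1 \iff K^\Gamma=1$ and $|G^{\Gamma_\infty}|=|K^{\Gamma_\infty}||H^{\Gamma_\infty}|$.

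The size of $\Aut(G,\phi)$ can be read off from the short exact sequence $1\to \Hom_\Gamma(H,K)\to \Aut(G,\phi)\to \Stab_{\Aut_\Gamma(K)}(\eta)\to 1$, where $\eta\in\operatorname{Ext}^1_\Gamma(H,K)$ is the extension class; summing $1/|\Stab(\eta)|$ over $\Aut_\Gamma(K)$-orbits on $\operatorname{Ext}^1_\Gamma(H,K)$ yields $|\operatorname{Ext}^1_\Gamma(H,K)|/|\Aut_\Gamma(K)|$ by orbit-stabilizer. Combining these ingredients,
\[
\sum_{[G]}\frac{|\Sur_\Gamma(G,H)|}{|G^{\Gamma_\infty}||\Aut_\Gamma(G)|} = \frac{1}{|H^{\Gamma_\infty}|}\sum_{[K]\colon K^\Gamma=1} \frac{|\operatorname{Ext}^1_\Gamma(H,K)|}{|\Hom_\Gamma(H,K)|\cdot|K^{\Gamma_\infty}||\Aut_\Gamma(K)|}.
\]

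The main obstacle is to prove the equality $|\operatorname{Ext}^1_\Gamma(H,K)|=|\Hom_\Gamma(H,K)|$ for all finite $\Z[|\Gamma|^{-1},\Gamma]$-modules $H$ and $K$; with it in hand, the last displayed sum collapses to $|H^{\Gamma_\infty}|^{-1}c^{-1}$ and the theorem follows. To establish this Hom--Ext identity I would decompose $\Z[|\Gamma|^{-1}][\Gamma]$ into its Wedderburn blocks (matrix algebras over local Dedekind orders $\O$, valid at each prime $p\nmid|\Gamma|$ by Maschke), use Morita equivalence to reduce to finite modules over a DVR, and check the cyclic case $|\Hom_\O(\O/\pi^a,\O/\pi^b)|=q^{\min(a,b)}=|\operatorname{Ext}^1_\O(\O/\pi^a,\O/\pi^b)|$ (immediate from the resolution $0\to\O\xrightarrow{\pi^a}\O\to\O/\pi^a\to 0$); biadditivity of $\Hom$ and $\operatorname{Ext}^1$ then extends the equality to all finite modules.
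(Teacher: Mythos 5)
Your argument is correct, and it takes a genuinely different route from the paper's. The paper deduces the moment formula by first writing $|\Sur_\fO(G,H)|$ in terms of counting sub-$\fO$-modules with given quotient, then invoking the identity
$\sum_{G/\sim}|\Aut_\fO(G)|^{-1}\#\{H'\subseteq G: H'\cong G_1,\ G/H'\cong G_2\}=|\Aut_\fO(G_1)|^{-1}|\Aut_\fO(G_2)|^{-1}$
from Cohen--Martinet [CM90, Prop.\ 3.3], and finally combining with Theorem~\ref{thm:u in Cohen-Martinet} to replace $|H|^{\uu}$ by $|H^{\Gamma_\infty}|$. Your approach instead decomposes by the kernel $K=\ker\phi$, classifies pairs $(G,\phi)$ with $\ker\phi\cong K$ by $\Aut_\Gamma(K)$-orbits on $\operatorname{Ext}^1_\Gamma(H,K)$, computes $|\Aut(G,\phi)|$ via the exact sequence with kernel $\Hom_\Gamma(H,K)$ and quotient the stabilizer of the extension class, and reduces everything to the equality $|\operatorname{Ext}^1_\Gamma(H,K)|=|\Hom_\Gamma(H,K)|$. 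That last equality, checked by Morita reduction to finite modules over the maximal order in a local division algebra (for $p\nmid|\Gamma|$ these are DVRs, the Schur index being trivial), together with biadditivity, does the work that [CM90, Prop.\ 3.3] does for the paper. The upshot is that your proof is more self-contained and makes the underlying mechanism---a finite Hom--Ext duality for modules over a maximal order---explicit, whereas the paper's is shorter once one grants the Cohen--Martinet counting identity. The uses of cohomological triviality to get $G^\Gamma=1\iff K^\Gamma=1$ and $|G^{\Gamma_\infty}|=|K^{\Gamma_\infty}||H^{\Gamma_\infty}|$ are correct, as is the orbit-stabilizer bookkeeping. (One small caveat you should flag: when $S$ is infinite, as it is for $\Z[|\Gamma|^{-1}]$, the normalizing constant $c^{-1}$ is finite only when all $u_i>0$; the paper notes this as part of the definition of the random module, and your computation implicitly assumes it.)
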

Here $\Sur_\Gamma(X,H)$ denotes the surjective $\Gamma$-module homomorphisms from $X$ to $H$.  See 
Theorem~\ref{thm:u in Cohen-Martinet} and Theorem~\ref{thm: formula for moments} for precise statements.
These moments are the most important averages of the Cohen-Lenstra-Martinet distributions. (See \cite[Section 3.3]{Clancy2015} on why they are called moments.)  The only non-trivial predicted averages of the Cohen-Lenstra-Martinet conjectures that have been proven are
  the $\Z/3\Z$-moment of the class groups of quadratic fields due to Davenport and Heilbronn  
 \cite{Davenport1971} 
(and Datskovsky and Wright \cite{Datskovsky1988} for quadratic extensions of general global fields) and the $\Z/2\Z$-moment of the class groups of cubic fields due to Bhargava \cite{Bhargava2005}.
(There is also more known on the $2$-Sylow subgroup of the class groups of quadratic fields; see  
\cite{Fouvry2006a,Smith2017}.)  When working over $\F_q(t)$ instead of $\Q$, there are also results on the $H$-moments of class groups, including of Ellenberg, Venkatesh, and Westerland \cite{Ellenberg2016} and the second author \cite{Wood2018} for quadratic extensions, and of Liu, the second author, and Zureick-Brown \cite{LWZB} for $\Gamma$-extensions, showing that as $q\ra\infty$ the moments match those in Theorem~\ref{T:Mom}.  The paper
\cite{Pierce2019} of Pierce, Turnage-Butterbaugh, and the second author explains how the Cohen-Lenstra-Martinet conjectures for the moments of class groups are related to other important conjectures in number theory, including the $\ell$-torsion conjecture for class groups, the discriminant multiplicity conjecture, generalized Malle's conjecture, and the count of elliptic curves with fixed conductor.  
 So given the relative accessibility and the centrality of these moments, Theorem~\ref{T:Mom} is useful because it tells us what moments the Cohen-Lenstra-Martinet conjectures predict.

 Moreover, we show that moments determine the Cohen-Lenstra-Martinet distributions uniquely, which is
particularly of interest because the moments are the statistics of class groups about which we seem most likely to be able to prove something. 
 
\begin{theorem}[Moments determine distribution]\label{T:InttoMomDet}
For every finite group $\Gamma$ and subgroup $\Gamma_\infty$, if $X$ is a random $\Z[|\Gamma|^{-1},\Gamma]$-module 
such that for every finite $\Z[|\Gamma|^{-1},\Gamma]$-module $H$ with $H^{\Gamma}=1$,
we have
$$
\E(|\Sur_\Gamma(X,H)|)=|H^{\Gamma_\infty}|^{-1}.
$$
then $X$ has the Cohen-Lenstra-Martinet distribution for $\Gamma$-fields with decomposition group $\Gamma_\infty$ at $\infty$.
\end{theorem}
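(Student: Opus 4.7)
The plan is to reduce the theorem to a general uniqueness statement: on the countable set of isomorphism classes of finite $\Z[|\Gamma|^{-1},\Gamma]$-modules $H$ with $H^{\Gamma}=1$, the assignment $H\mapsto |H^{\Gamma_\infty}|^{-1}$ arises as the $\Sur_\Gamma$-moments of at most one probability distribution. Granting this, the theorem follows immediately: the Cohen-Lenstra-Martinet distribution itself satisfies the moment identity by Theorem~\ref{T:Mom}, so any other distribution realizing the same moments must coincide with it.

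To establish uniqueness, I would work prime by prime. Since $|\Gamma|$ is inverted, the category of finite $\Z[|\Gamma|^{-1},\Gamma]$-modules splits as a product over primes $p\nmid|\Gamma|$, and a test module of $p$-power order detects only the $p$-Sylow part $X_p$. Applying the moment identity to test modules $H=\bigoplus_p H_p$ supported on several primes yields
$$
\E\Bigl(\prod_p |\Sur_\Gamma(X_p,H_p)|\Bigr) = \prod_p |H_p^{\Gamma_\infty}|^{-1} = \prod_p \E(|\Sur_\Gamma(X_p,H_p)|),
$$
and in conjunction with the single-prime uniqueness statement this factorization forces the $X_p$ to be jointly independent. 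Thus it suffices to show that for each prime $p\nmid|\Gamma|$, the distribution of $X_p$ is uniquely determined by its moments.

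For the single-prime case, I would invert the linear map $\mu\mapsto\bigl(H\mapsto\sum_G \mu(G)|\Sur_\Gamma(G,H)|\bigr)$ by Möbius inversion on the poset of finite $\Z_p[\Gamma]$-modules. Using the standard decomposition
$$
|\Hom_\Gamma(A,H)| = \sum_{H'\subset H}|\Sur_\Gamma(A,H')|
$$
and inverting, one obtains an identity of the shape
$$
\mathbf{1}_{A\isom G} = \frac{1}{|\Aut_\Gamma(G)|}\sum_{H} c(G,H)\,|\Sur_\Gamma(A,H)|
$$
for coefficients $c(G,H)$ supported on finite $\Z_p[\Gamma]$-modules $H$ admitting $G$ as a quotient. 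Taking expectations and substituting the hypothesis then gives a closed-form expression for $\Pr(X_p\isom G)$ in terms of the prescribed moments alone.

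The main obstacle will be proving the absolute convergence of this inversion series. The key ingredients I would deploy are: (a) the uniform bound $\E(|\Sur_\Gamma(X_p,H)|)=|H^{\Gamma_\infty}|^{-1}\leq 1$; (b) the Wedderburn decomposition $\F_p[\Gamma]\isom\prod_i M_{n_i}(\F_{q_i})$, valid since $p\nmid|\Gamma|$, which by Morita equivalence reduces counts of $\F_p[\Gamma]$-modules of bounded rank to counts of $\F_{q_i}$-vector spaces; and (c) classical $q$-series (Hall-type) identities of the form used in the original Cohen-Lenstra setting, to bound the resulting Möbius coefficients. Together these give geometric decay in the $\F_p[\Gamma]$-rank of $H$, so the Möbius sum converges absolutely and expresses $\Pr(X_p\isom G)$ purely in terms of the moments. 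Applying this formula to both $X$ and to the Cohen-Lenstra-Martinet random variable $Y$, and invoking Theorem~\ref{T:Mom}, yields $\Pr(X\isom G)=\Pr(Y\isom G)$ for every admissible $G$, completing the proof.
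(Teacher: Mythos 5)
Your plan is in the right spirit and shares some ingredients with the paper (prime-by-prime decomposition after completing at primes of the maximal order, reduction to modules over matrix algebras via Morita, $q$-series bounds on submodule counts), but the centerpiece — that a M\"obius inversion on the poset of finite $\Z_p[\Gamma]$-modules yields an \emph{absolutely convergent} series in terms of the surjection moments — is precisely the step you call ``the main obstacle'' and then do not establish, and it is not how the paper argues. The paper does not invert a M\"obius matrix directly; Theorems~\ref{thm: limit moments}--\ref{T:infmom} reduce to \cite[Theorem 8.2]{MW17}, whose proof proceeds by a truncation-by-rank argument (restricting attention to partitions with bounded first column length and showing that the moment growth bound forces the contribution of large rank to vanish), not by verifying absolute convergence of a formal inverse. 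The genuinely new content in this part of the paper is Lemmas~\ref{lemma: number of hom}, \ref{L:oftype} and \ref{lemma: estimation of Hom-moments}, which verify that finite modules over a maximal order $\so$ in a $p$-adic division algebra carry exactly the same partition parameterization and $\Hom$/submodule-counting estimates as finite abelian $p$-groups, with the residue cardinality $q$ in place of $p$; this is what allows \cite[Theorem 8.2]{MW17} to be applied verbatim with ``real numbers $q_i$'' in place of primes.

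Concretely, there are two gaps. First, the M\"obius inversion converges absolutely in the elementary abelian case (Heath-Brown \cite{Heath-Brown1994}), because the rank bound makes everything finite, but for general finite $\so$-modules (partitions with parts $>1$) the coefficients $c(G,H)$ of the formal inverse of the surjection matrix grow in a way that is not obviously dominated by the decay of $|H^{\Gamma_\infty}|^{-1}\le 1$. Whether $\sum_H |c(G,H)|\cdot|H^{\Gamma_\infty}|^{-1}<\infty$ is exactly the delicate question that motivated the refined arguments in \cite{Ellenberg2016} and \cite{MW17}; asserting it is not the same as proving it, and you would be re-deriving a nontrivial theorem. Second, the step ``single-prime uniqueness plus factorization of mixed moments forces joint independence'' is too quick: you need a multi-prime version of moments-determine-distribution to conclude the joint law is the product (which is what the multi-partition setting of \cite[Theorem 8.2]{MW17} delivers and what the paper uses), and for the $|S|=\infty$ case one must additionally take a limit over finite sets of primes as in the paper's Theorem~\ref{T:infmom}. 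Also a smaller point: reducing to $\F_{q_i}$-vector spaces via the Wedderburn decomposition of $\F_p[\Gamma]$ is too coarse — one must work with the full partition structure of modules over the $\Z_p$-maximal orders $\so_i$ (not just residue fields) to obtain the submodule-counting bounds that drive the argument.
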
 

See Theorems~\ref{thm: moments determine distribution finite case} and \ref{T:infmom} for precise statements.   When we restrict to  groups whose orders are only divisible by a finite set of primes, we also prove that a sequence of random variables with these moments in the limit must have the Cohen-Lenstra-Martinet distribution as its limit distribution.  Theorem~\ref{T:InttoMomDet} is part of a long line of work showing results in the same spirit for other categories of groups, including work of Heath-Brown \cite[Lemma 17]{Heath-Brown1994}
for elementary abelian $p$-groups, 
Ellenberg, Venkatesh, and Westerland \cite[Section 8]{Ellenberg2016}
for finite abelian $p$-groups, 
the second author for finite abelian groups \cite[Section 8]{MW17}, and Boston and the second author \cite[Theorem 1.4]{Boston2017} for pro-$p$ groups with a $\Z/2\Z$ action.
See  \cite{Delaunay2014a,Fouvry2006,Fulman2019,Wood2018} for other examples.

Next, we consider the implications of the Cohen-Martinet conjecture for class groups of non-Galois fields.  While these conjectures do not directly make claims about class groups of non-Galois fields, when the class groups of non-Galois fields can be given as a function of the class groups of Galois fields, then the Cohen-Martinet conjectures make a prediction for their average.  For example, let $\Gamma$ be a finite group and $\Gamma'$ a subgroup of $\Gamma$.
When $L$ is a $\Gamma$-field and $K$ is the fixed field $L^{\Gamma'}$, then, localizing away from primes dividing $|\Gamma|$, we have 
$\Cl_K \tesnor_\Z \Z[|\Gamma|^{-1}]=(\Cl_L^{\Gamma'})\tesnor_\Z \Z[|\Gamma|^{-1}]$ (where the $\Gamma'$ exponent denotes taking the fixed part).  However, there is also the possibility of using the Cohen-Martinet conjectures, for some primes $p\mid |\Gamma|$,
to predict distributions of  $p$-Sylow subgroups $\Cl_{K,p}$ of $\Cl_K$.
In order to realize this possibility, we prove a new result relating class groups of non-Galois fields to class groups of Galois fields, in particular at primes dividing the order of the Galois group.

\begin{theorem}[Determination of class groups of non-Galois fields from Galois]\label{T:getCl}
Let $L/K$ be an extension of number fields such that  $L/\Q$ is Galois with Galois group $\Gamma$ and let $\Gamma'=\Gal(L/K)$.
Let $e_{\Gamma/\Gamma'}$ be the central idempotent of $\Q[\Gamma]$ for the augmentation character for $\Gamma$ acting on $\Gamma'$ cosets,
and $p$ a prime not dividing the denominator of $e_{\Gamma/\Gamma'}$ and such that $e_{\Gamma/\Gamma'}\Z_{(p)}[\Gamma]$ is a maximal order. 
Then we have an isomorphism
 \[\Cl_{K,p}\stackrel{\sim}{\longrightarrow}\big(e_{\Gamma/\Gamma'}\Cl_{L,p}\big)^{\Gamma'},\]
 where the subscript $p$ denotes taking the Sylow $p$-subgroup.
\end{theorem}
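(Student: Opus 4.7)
The plan is to define $\psi\colon \Cl_{K,p} \to (e_{\Gamma/\Gamma'}\Cl_{L,p})^{\Gamma'}$ by $[\mathfrak{a}]\mapsto e_{\Gamma/\Gamma'}\cdot [\mathfrak{a}\O_L]$, and to show it is an isomorphism. Well-definedness is immediate: since $p$ does not divide the denominator of $e_{\Gamma/\Gamma'}$, this central idempotent lies in $\Z_{(p)}[\Gamma]$, and being central it commutes with the $\Gamma'$-action, so $\psi$ lands in $(e_{\Gamma/\Gamma'}\Cl_{L,p})^{\Gamma'}$.

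For injectivity, I would combine the new capitulation kernel theorem proved earlier in the paper with the maximal order hypothesis. The rational shadow of the statement is that the natural isomorphism $\Cl_K \tensor \Q \isom (\Cl_L \tensor \Q)^{\Gamma'}$ already sits inside the $e_{\Gamma/\Gamma'}$-isotypic part of $\Cl_L \tensor \Q$: its putative complement $(\Cl_L \tensor \Q)^{\Gamma}$ vanishes because $\Cl_\Q = 0$ forces $\Cl_L^\Gamma$ to be $|\Gamma|$-torsion. The maximal order hypothesis upgrades this rational inclusion to the integral statement that the image of $\iota_p$ already lies in $e_{\Gamma/\Gamma'}\Cl_{L,p}$, and the capitulation kernel theorem then provides the vanishing of $\ker\iota_p$ at $p$.

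Surjectivity will be the main obstacle. Given $y \in (e_{\Gamma/\Gamma'}\Cl_{L,p})^{\Gamma'}$, I would first consider $x_0 := N_{L/K}(y) \in \Cl_{K,p}$. Combining the identity $\iota \circ N_{L/K} = N_{\Gamma'}$ on $\Cl_L$ (where $N_{\Gamma'} := \sum_{g \in \Gamma'} g$) with $N_{\Gamma'}y = |\Gamma'|y$ for $\Gamma'$-fixed $y$, one obtains $\psi(x_0) = |\Gamma'|y$. This closes the argument when $p \nmid |\Gamma'|$. The remaining case $p \mid |\Gamma'|$ is precisely where the maximal order hypothesis becomes essential: the ring $R := e_{\Gamma/\Gamma'}\Z_{(p)}[\Gamma]$ then decomposes as a product of maximal orders in matrix algebras over division algebras, and I would exploit this Wedderburn-type structure to produce a $\Z_{(p)}$-linear section of $N_{\Gamma'}$ on the isotypic component $e_{\Gamma/\Gamma'}\Cl_{L,p}$, effectively inverting $|\Gamma'|$ after projection onto $(e_{\Gamma/\Gamma'}\Cl_{L,p})^{\Gamma'}$ and producing the inverse of $\psi$.
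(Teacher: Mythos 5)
Your proposal has a genuine logical gap in the injectivity step, and the surjectivity step gestures at the right idea but misses the key lemma.

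\textbf{The circularity.} You invoke ``the new capitulation kernel theorem proved earlier in the paper'' to conclude that $\ker(\iota_*)$ has no $p$-part. But the paper's statement that the capitulation kernel $\ker(\Cl_K\to\Cl_L)$ has order prime to $p$ (for $p$ good for $e_{\Gamma/\Gamma'}$) is Corollary~\ref{C:cap}, which the paper \emph{derives from} Theorem~\ref{T:getCl}, not the other way around. All prior capitulation results the paper cites (Hilbert~94, Artin--F\"urtwangler, Suzuki, Gruenberg--Weiss) say the capitulation kernel is \emph{large}; the vanishing at good $p$ is new and is precisely the content of the theorem you are trying to prove. Relying on it makes the argument circular. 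Separately, the ``rational shadow'' paragraph does not parse as written: $\Cl_K\otimes\Q$ and $\Cl_L\otimes\Q$ are zero because class groups are finite, so the asserted rational isomorphism and the vanishing of $(\Cl_L\otimes\Q)^\Gamma$ are vacuous and give no information about the integral structure.

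\textbf{The surjectivity step.} Your computation $\psi(N_{L/K}(y))=N_{\Gamma'}y=|\Gamma'|y$ is correct, and your instinct that the maximal order hypothesis must be used to get past the case $p\mid|\Gamma'|$ is also correct. But ``exploit the Wedderburn structure to produce a $\Z_{(p)}$-linear section of $N_{\Gamma'}$'' is exactly the statement that $\hat H^0(\Gamma', e_{\Gamma/\Gamma'}\Cl_{L,p})=0$, and the Wedderburn decomposition of the algebra by itself does not produce this. What does produce it is the fact that maximal orders are hereditary, hence every finitely generated module over $e_{\Gamma/\Gamma'}\Z_{(p)}[\Gamma]$ is a quotient of one projective by another, each a direct summand of a free $\Z_{(p)}[\Gamma]$-module, hence cohomologically trivial as a $\Gamma$-module. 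This is the paper's Lemma~\ref{lemma: cohomological triviality}, and it is the engine of the whole proof.

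\textbf{How the paper actually does it.} The paper's argument is quite different in structure from yours. It first separates off the $\Cl_{K_0}$-part (Lemma~\ref{lemma: good primes for augmentation character} and part (i) of Theorem~\ref{thm: Gamma'-invariant}), then works at the level of groups of ideals and principal ideals: it establishes a short exact sequence $1\to P_{K,p}\cap e_{\Gamma/\Gamma'}P_{L,p}\to I_{K,p}\cap e_{\Gamma/\Gamma'}I_{L,p}\to\Cl_{K|K_0}[p^\infty]\to 1$ by an explicit decomposition of $I^{\Gamma'}_{L,p}$, and then uses $\hat H^0$ and $\hat H^1$ vanishing (from cohomological triviality of $e_{\Gamma/\Gamma'}I_{L,p}$, $e_{\Gamma/\Gamma'}P_{L,p}$, and $e_{\Gamma/\Gamma'}\Cl_{L,p}$) to identify the $\Gamma'$-invariants of that sequence with the one computing $(e_{\Gamma/\Gamma'}\Cl_{L,p})^{\Gamma'}$; injectivity and surjectivity come out simultaneously from the resulting commutative diagram with short exact rows. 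Nowhere is the capitulation result used as an input; it falls out at the end. To repair your proposal you would need to replace the appeal to Corollary~\ref{C:cap} by a direct argument --- cohomological triviality again gives $\hat H^{-1}(\Gamma',e_{\Gamma/\Gamma'}\Cl_{L,p})=0$, which combined with the ideal-theoretic analysis controls the kernel --- and to replace the ``Wedderburn section'' gesture by the hereditary-order/cohomological-triviality lemma.
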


See Theorem~\ref{thm: Gamma'-invariant} for a precise statement (for relative class groups over an arbitrary base number field).  In particular, we note the restriction on $p$ is exactly the condition  on $p$  for the Cohen-Martinet conjectures to say something about the distribution of 
$e_{\Gamma/\Gamma'}\Cl_{L,p}$.  So Theorem~\ref{T:getCl} allows us to fully determine the implications of the Cohen-Martinet conjectures
for the class groups of non-Galois fields.

Moreover, for $p,K,L$ as in Theorem~\ref{T:getCl}, we have the immediate corollary that the order of the kernel of the capitulation map $\Cl_K \ra \Cl_L$ is not divisible by $p$.  The capitulation kernel is very long-studied, but its structure is not well-known.  Hilbert's Theorem 94 \cite{Hilbert1998} proves that when $L/K$ is finite, cyclic, and unramified, then the degree $[L:K]$ divides the order of the capitulation kernel. Hilbert then conjectured the Principal Ideal Theorem of class field
theory, eventually proved by  Artin and F\"{u}rtwangler, that every ideal class in $K$ capitulates in the Hilbert class field.  Suzuki \cite{Suzuki1991a} and Gruenberg and Weiss \cite{Gruenberg2000} proved further generalizations showing that the capitulation kernel for unramified abelian extensions is large. 
Our theorem above is in the other direction, proving in some cases there is no $p$-part of the capitulation kernel.

Theorem~\ref{T:getCl} implies that the Cohen-Martinet conjectures in principle give a prediction for the distribution of class groups of fields $K$ as above, but the predicted distribution for a finite abelian $p$-group $H$ is then the sum over  $e_{\Gamma/\Gamma'}\Z_{(p)}[\Gamma]$-modules $G$ such that 
$G^{\Gamma'}\isom H$ (as groups) of the probability for $G$ in the Galois predictions (see Equation~\eqref{eqn: probability of non-Galois fields in section 6}).  This prediction does not  have the appearance of objects appearing with frequency inversely proportional to their number of automorphisms.  
However, in Section~\ref{S:Re}, we prove new theorems to give such a perspective on these probabilities.

Of course when $L/\Q$ is Galois, we have that $\Gal(L/\Q)$ acts on $\Cl_L$.  However, when $K/\Q$ has no automorphisms, one might at first guess that $\Cl_K$ has no particular structure other than that of a finite abelian group.  We prove, however, that there is always a natural action of a certain ring $\fo$ on $\Cl_K$ (depending on the Galois groups of the Galois closure over $\Q$ and $K$).
  Given a representation $V$ of  finite group $\Gamma$ over $\Q$ and a subgroup $\Gamma' $ of $\Gamma$, the Hecke algebra $\Q[\Gamma'\backslash\Gamma/\Gamma']$ naturally acts on $V^{\Gamma'}$.  We construct an integral model $\fo$ of the Hecke algebra
so that the class group $\Cl_{K,p}$ (for $K,p,\Gamma,\Gamma'$ as in Theorem~\ref{T:getCl}) is naturally an $\fo$-module (see Lemma~\ref{lemma: Gamma'-invariant part is an o-module}) and prove that our constructed $\fo$ is a maximal order (Corollary~\ref{cor: o is a maximal order}).  This definition of $\fo$  is  particularly delicate at the primes $p\mid |\Gamma'|$, but the proofs require similar work at all $p$.
Note that $\fo$ can be bigger than $\Z$ even when the field $K$ has no automorphisms; see Example~\ref{Ex:A5} on degree $10$ fields with Galois closure with group $A_5$ and Proposition~\ref{Prop: criterion} in which we prove $\fo$ is trivial if and only if the augmentation character for $\Gamma$ acting on $\Gamma'$ cosets is absolutely irreducible. 

Moreover, Theorem~\ref{T:getCl} and the results in Section \ref{S:Re} show that the $p$-Sylow subgroup of the $\Gamma$-module $\Cl_{L,p}$ of a Galois field $L$ containing $K$ determines the $\fo$-module structure of $\Cl_{K,p}$.  So we have shown that the Cohen-Martinet conjectures imply some prediction for the distribution of the $\fo$-modules $\Cl_{K,p}$, and we further prove a simple expression for the prediction in terms of $|\Aut_\fo(H)|^{-1}$ by way of the following result.  

\begin{theorem}[Cohen-Martinet predicts $|\Aut_\fo(H)|^{-1}$ for non-Galois fields]\label{T:nonGalprobs}
Given a finite group $\Gamma$ and subgroup $\Gamma'$, 
for every prime $p$ satisfying the condition of Theorem~\ref{T:getCl}, and every $p$-group $\fo$-module $H$,
there is a unique finite $e_{\Gamma/\Gamma'}\Z_{(p)}[\Gamma]$-module $G$ such that
$
G^{\Gamma'}\cong H
$ as $\fo$-modules.  We also have
$$
\Aut_{e_{\Gamma/\Gamma'}\Z_{(p)}}(G)\isom \Aut_{\fo}(H).
$$
\end{theorem}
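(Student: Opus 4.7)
The plan is to prove Theorem~\ref{T:nonGalprobs} as a direct consequence of a Morita equivalence. Set $A := e_{\Gamma/\Gamma'}\Z_{(p)}[\Gamma]$ and let $P := e_{\Gamma/\Gamma'}\Z_{(p)}[\Gamma/\Gamma']$, regarded as a left $A$-module. For any $A$-module $M$ there is a natural identification
\[
\Hom_A(P,M) \;=\; \Hom_{\Z_{(p)}[\Gamma]}(\Z_{(p)}[\Gamma/\Gamma'], M) \;=\; M^{\Gamma'},
\]
and from the construction of $\fo$ in Section~\ref{S:Re} (an integral model of the Hecke algebra $\Q[\Gamma'\backslash\Gamma/\Gamma']$ acting on $M^{\Gamma'}$) I would identify $\fo$ with $\End_A(P)^{op}$. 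The task then reduces to showing that $P$ is an $A$-progenerator, after which classical Morita theory yields an equivalence of abelian categories $(-)^{\Gamma'}: A\text{-mod}\xrightarrow{\sim} \fo\text{-mod}$ with quasi-inverse $P\otimes_\fo-$, from which the theorem follows at once.

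To show $P$ is an $A$-progenerator, I would use that, by the hypothesis inherited from Theorem~\ref{T:getCl}, $A$ is a maximal $\Z_{(p)}$-order in the semisimple $\Q$-algebra $A_\Q := e_{\Gamma/\Gamma'}\Q[\Gamma]$. Maximal orders over a discrete valuation ring are hereditary, so every full $A$-lattice is projective; in particular $P$, which is a full $A$-lattice in $P_\Q := e_{\Gamma/\Gamma'}\Q[\Gamma/\Gamma']$, is finitely generated projective over $A$. For the generator property, by projectivity it suffices to check rationally that $P_\Q$ generates $A_\Q\text{-mod}$, and this holds because $e_{\Gamma/\Gamma'}$ is by definition the central idempotent projecting onto exactly the non-trivial simple constituents of the permutation module $\Q[\Gamma/\Gamma']$, so every simple summand of $A_\Q$ appears in $P_\Q$ with positive multiplicity.

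With the Morita equivalence in hand, the three assertions of the theorem all come for free. For a finite $p$-group $\fo$-module $H$, set $G := P \otimes_\fo H$: this is a finite $A$-module (Morita preserves finiteness, and since $p$ is central in both rings, annihilation by $p^n$ on $H$ lifts to annihilation by $p^n$ on $G$), satisfies $G^{\Gamma'}\cong H$ as $\fo$-modules via the counit, and is unique up to isomorphism among $A$-modules with this property because any other such $A$-module corresponds under $(-)^{\Gamma'}$ to an $\fo$-module isomorphic to $H$. The isomorphism $\Aut_A(G)\cong\Aut_\fo(H)$ is then the specialization of full-faithfulness of the equivalence to automorphism groups. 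I expect the main obstacle to be the precise identification of $\fo$ with $\End_A(P)^{op}$ at primes $p$ dividing $|\Gamma'|$ — exactly where the paper's construction of $\fo$ is most delicate — together with the integral verification that $P$ is a progenerator; the maximal-order hypothesis is precisely what makes the latter tractable by reducing the question to the rational one.
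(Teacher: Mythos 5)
Your proof is correct and, like the paper, reduces the theorem to a Morita equivalence between $e_{\Gamma/\Gamma'}\Z_{(p)}[\Gamma]$-modules and $\fo$-modules; the derivation of the three assertions from that equivalence is exactly as in the paper (which cites its Theorem~\ref{theorem: equivalence of categories}). But you establish the equivalence along a different path. The paper proves it by exhibiting a Morita context whose two bilinear pairings, built from the bimodules ${}^{\Gamma'}(e_{\Gamma/\Gamma'}\fO)$ and $e_{\Gamma/\Gamma'}\fO e_1'$, are verified surjective by completing at $p$ and writing $e\fO\cong M_l(\so)$ explicitly (Lemmas~\ref{lemma:the functor gives Gamma' invariant part when e_1' not contained in O} and \ref{lemma:Morita equivalence when e_1' not contained in O}). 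You instead invoke the progenerator criterion for a single module $P=e_{\Gamma/\Gamma'}\Z_{(p)}[\Gamma/\Gamma']$, which buys a more conceptual argument: the adjunction $\Hom_A(P,-)=(-)^{\Gamma'}$ is immediate, projectivity of $P$ comes from heredity of the maximal order $A$, and generator-ness comes from the rational computation together with the fact that a full idempotent two-sided ideal of a maximal order must equal the order (since two-sided fractional ideals form a group). This last step is the place to be careful: ``it suffices to check rationally'' is not true for a general order, and you correctly flag that the maximal-order hypothesis is what makes it work. The other point to nail down is the identification $\fo\cong\End_A(P)^{\mathrm{op}}$, which also you flag; it follows by checking that the two natural maps --- $\phi\mapsto\phi(e_{\Gamma/\Gamma'}e_1')$ and $v\mapsto(\text{right mult.\ by }v)$ --- are mutually inverse, using that $e_{\Gamma/\Gamma'}e_1'\in P$ and that for $v=we_1'\in\fo$ with $w\in A$ one has $pv=(xw)e_1'\in P$ for $p=xe_1'\in P$ (here the good-prime hypothesis guarantees $e_{\Gamma/\Gamma'}\fO=e_{\Gamma/\Gamma'}\Z_{(p)}[\Gamma]=A$, which is needed for $xw\in A$). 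The paper's approach avoids this identification by never phrasing things as $\End_A(P)^{\mathrm{op}}$, but pays for it with explicit matrix computations in the completions; your route is shorter modulo standard facts about hereditary orders, and has the advantage of making clear that the equivalence is the one furnished by the natural $\Gamma'$-invariants functor.
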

See Theorem~\ref{thm: punchline} for a related statement precisely on the implications of the Cohen-Martinet conjecture.
The key result we prove that allows us to prove Theorem~\ref{T:nonGalprobs} is Theorem~\ref{theorem: equivalence of categories}, which gives a Morita equivalence between the categories of $e_{\Gamma/\Gamma'}\Z_{(p)}[\Gamma]$-modules and $\fo$-modules.  This is the fundamental algebraic property of our integral model $\fo$ of the Hecke algebra.

Note that Theorem~\ref{T:getCl} does not require $L$ to be the Galois closure of $K$.  So actually, the Cohen-Lenstra-Martinet heuristics give infinitely many different predictions for the distribution of non-Galois (or Galois) class groups, by taking fixed fields of larger and larger fields.  In Section~\ref{S:ind}, we prove that all of the predicted distributions agree, which is an important internal consistency check on the conjectures.

Theorems~\ref{T:CM}, \ref{T:Mom}, \ref{T:InttoMomDet}, and \ref{T:nonGalprobs} are theorems in the theory of finite $\Gamma$-modules, including in the probability theory of random finite $\Gamma$-modules.  Even though we have proven them to specifically elucidate conjectures about class groups, we expect them, especially Theorems \ref{T:Mom} and \ref{T:InttoMomDet} to have applications in other contexts. 
Distributions related to the Cohen-Lenstra distribution have arisen for predicting the distribution of Tate-Shafarevich groups of elliptic curves \cite{Delaunay2001,Bhargava2015b}, and so in order to generalize the predictions of 
\cite{Park2016} on the asymptotics of elliptic curves of a given rank  over $\Q$ to other base global fields, one will need to use an analog of the Cohen-Martinet distributions.
Also, beyond number theory, the Cohen-Lenstra distributions on finite abelian groups, and related distributions, have many interesting connections in algebraic combinatorics; see the recent work of Fulman and Kaplan \cite{Fulman2019} and also \cite{Chinta2017,Clancy2015, Clancy2015a,Fulman1999,Fulman2014,Fulman2016, Garton2016, Lengler2008, Lengler2010, Nguyen2016,Stanley2016, Wang2017}.  Further, the theorems that moments determine the distribution have been used for determining  distributions arising in the theory of random graphs, such as the sandpile groups of Erd\"{o}s-R\'{e}nyi and random regular graphs \cite{Koplewitz2017a, Meszaros2018,MW17}.  These theorems on the moments have also been used to show that certain random matrices have cokernels in the Cohen-Lenstra distribution \cite{Nguyen2018a, Nguyen2018b, Wood2015a}, and as an application determine the probability that a random $0/1$ rectangular matrix gives a surjective map to $\Z^n$.
The Cohen-Lenstra and related distributions have also arisen in questions about random topological spaces \cite{Dunfield2006,Kahle2017}.  The more general Cohen-Lenstra-Martinet distributions may be relevant in many of these contexts.

\section{Notation}\label{S:Notations}
Throughout the whole paper, $\Gamma$ is always a finite group and $S$ is always a set of (possibly infinitely many) rational primes.

\begin{definition} 
Let $K$ be a number field and $K_0/\Q$ be a subextension of $K$. 
We write $\Cl_K$ for the class group of $K$.
Then we define the \emph{relative class group} $\Cl_{K/K_0}$  to be the subgroup of $\Cl_K$ consisting of ideal classes $\alpha$ with trivial norm $\operatorname{Nm}_{K/K_0}\alpha$ in $\Cl_{K_0}$. Also, let $I_K$ be the group of fractional ideals and $P_K$ the group of principal fractional ideals of $K$.
\end{definition}

\begin{definition}
For a field $K_0$, by a \emph{$\Gamma$-extension of $K_0$}, we mean an isomorphism class of pairs $(K,\tau)$, where $K$ is a Galois extension of $K_0$, and $\tau: \Gal(K/K_0)\isom \Gamma$ is an isomorphism.  An isomorphism of pairs $(K,\tau)$, $(K',\tau')$ is an isomorphism $\alpha: K\ra K'$ such that the map $m_\alpha: \Gal(K/K_0) \ra \Gal(K'/K_0)$ sending $\phi$ to $ \alpha \circ \phi  \circ \alpha^{-1}$ satisfies $\tau' \circ m_\alpha=\tau$.  We sometimes leave the $\tau$ implicit, but this is always what we mean by a $\Gamma$-extension.  We also call $\Gamma$-extensions of $\Q$ \emph{$\Gamma$-fields.}

\end{definition}

\begin{definition}
Define $\Z_S$ to be the localization of $\Z$ by the subset of non-zero integers not divisible by any primes in $S$, so the maximal ideals of $\Z_S$ are given by the primes in $S$.
For any finite abelian group $G$, define its $S$ part $G^S$ as the subgroup generated by all $p$-Sylow subgroups with $p\in S$. 
(Note that our definition for $S$-part of $G$ is the opposite of $G^S$ in \cite{CM90}.)  We will also use the usual notation  $\Z_{(p)}$ for $\Z_S$ when $S=\{p\}$.
\end{definition}

\begin{definition}
If $f$ is a measurable function on a probability space, 
we let $\P$ denote the probability measure 
and  $ \mathbb{E}(f)$
denote the expected value of $f$.
In this paper,  our probability spaces will always be discrete and countable and
 \[\mathbb{E}(f)=\sum_{i=1}^\infty f(G_i)\mathbb{P}(G_i).\]
\end{definition}

Throughout the paper, we often have a ring $R$, a central idempotent $e$ of $R$, and then consider the ring $eR$.  The reader is warned that $eR$ is \emph{not} a subring of $R$ in the usual sense, as $R$ and $eR$ do not share an identity.  One could consider $eR$ as notation for the quotient $R/(1-e)R$.

\section{Explanation of the Cohen-Lenstra-Martinet Heuristics in the Galois case}\label{S:C1}

The goal of this section is to state Cohen, Lenstra, and Martinet's conjectures on the distribution of relative class groups of Galois extensions. This requires introducing many pieces of notation.

\subsection{Notations for semisimple  \texorpdfstring{$\Q$}{Q}-algebras}\label{S:ssNot}

Let $A$ be a finite dimensional semisimple $\Q$-algebra; we denote by $\{e_i\}_{1\leq i \leq m}$ its irreducible \emph{central} idempotents, and $A_i = e_iA$ its simple factors.
The algebra $A$ is thus identified with a product $\prod_{i=1}^m A_i$, 
where each algebra $A_i$ is isomorphic to an algebra of matrices $M_{l_i}(D_i),$ 
where $D_i$ is a division algebra of finite rank over $\Q$ of which the center is a number field $K_i$.  
We let $h_i^2=\dim_{K_i} A_i.$
Let $\fO$ be a maximal order in $A$ and $G$ a finite $\fO$-module. For any $\uu\in\Q^m$, we define
 \[|G|^{\uu}:=\prod_{i=1}^m|e_iG|^{u_i}.\]
 (See \cite[\S10]{R03maximal} for basic results on semisimple $\Q$-algebras and maximal orders.)

\subsection{Notations for the Heuristics}\label{S: notations for the Heuristics}

In the rest of this section, we let $A=\Q[\Gamma]$, and continue with the notation above. 
In particular, we let
 \[e_1=\frac{1}{|\Gamma|}\sum_{\sigma\in\Gamma}\sigma.\]
Each $e_i$ corresponds to a distinct irreducible $\Q$-representation of $\Gamma$ with character $\chi_i$.
We choose a fixed absolutely irreducible character $\varphi_i$ contained in $\chi_i$.

Now let $K_0$ be a number field, and $K/K_0$ a Galois extension with Galois group $\Gamma$. 
If $v$ is an infinite place $v$ of $K_0$, then let $\Gamma_v$ be the decomposition group at $v$.
We also define
 \[\chi_K=-1+\sum_{v\mid\infty}\operatorname{Ind}_{\Gamma_v}^\Gamma 1_{\Gamma_v},\]
which is a character of $\Gamma$ associated to $K/K_0$.

\begin{definition} We define the rank of $K|K_0$ to be an $m-1$-tuple in $\Q^{m-1}$ given by the formula
\begin{equation}\label{E:defrank}
\uu=(u_2,\dots,u_m),\,u_i=\frac{1}{h_i}\langle\chi_K,\varphi_i\rangle\,\quad\quad\forall i=2,\dots,m.
\end{equation}
\end{definition}
\begin{remark}

 For the original definition of rank of $K$, see \cite[Definition 6.4]{CM90}. These two definitions are equivalent by \cite[Theorem 6.7]{CM90}.

\end{remark}

Let $S$ be a finite set of primes.  We define the following random module to model the class groups $\Cl_{K}^S$, which are naturally $(1-e_1)\Z_S[\Gamma]$-modules.

\begin{definition} If $p\in S$ implies that $p\nmid|\Gamma|$, then 
for $\uu=(u_2,\dots,u_m)\in\Q^{m-1}$, we  
define a random variable $X=X((1-e_1)\Q[\Gamma],\uu,(1-e_1)\Z_S[\Gamma])$ to be a random $(1-e_1)\Z_S[\Gamma]$-module such that for all finite $(1-e_1)\Z_S[\Gamma]$-modules $G_1,G_2$, we have
 \[\frac{\mathbb{P}(X\cong G_1)}{\mathbb{P}(X\cong G_2)}=\frac{|G_2|^{\uu}|\Aut_\Gamma(G_2)|}{|G_1|^{\uu}|\Aut_\Gamma(G_1)|}\]
 (where, of course, we order the irreducible central idempotents of $(1-e_1)\Q[\Gamma]$ by the order
 in $\Q[\Gamma]$).
\end{definition}

\begin{remark}\label{remark:class groups have trivial invariant part}
\cut{\begin{enumerate}[(i)]

\item 

Note that by our assumption that primes in $S$ are relatively prime to $|\Gamma|$, we know that $e_i\in\Z_S[\Gamma]$ for every $i=1,\ldots,m$. This means that the ring $(1-e_1)\Z_S[\Gamma]$ is a component of the group ring $\Z_S[\Gamma]$. And for any $(1-e_1)\Z_S[\Gamma]$-module $G$, it is naturally a $\Z_S[\Gamma]$-module via the projection $\Z_S[\Gamma]\to(1-e_1)\Z_S[\Gamma]$ given by $x\mapsto(1-e_1)x$. In particular,
 \[G^\Gamma=e_1G=e_1(1-e_1)\cdot G=0\cdot G=0,\]
i.e., a $(1-e_1)\Z_S[\Gamma]$-module is just a $\Z_S[\Gamma]$-module with trivial $\Gamma$-invariant part.

\item  The $S$-part of the relative class group has trivial $\Gamma$-invariant part, because for any ideal class $x\in\Cl^S_{K|K_0}$,
 \[e_1\cdot x=\frac{1}{|\Gamma|}\Nm_{K|K_0}x=0\in\Cl^S_{K|K_0}.\]
This is the reason why we only consider the category of finite $(1-e_1)\Z_S[\Gamma]$-modules.
\item} 
It follows from \cite[Theorem 3.6]{CM90} (with their $\underline{u}$ as $\underline{\infty}$ and their $\underline{s}$ as our $\underline{u}$) that
this definition is well-defined, i.e., the series
 \[\sum_G\frac{1}{|G|^{\uu}|\Aut_\Gamma(G)|},\]
is convergent, where $G$ runs through all isomorphism classes of finite $(1-e_1)\Z_S[\Gamma]$-modules. 
Even when $|S|=\infty$, the series is still convergent as long as 
$u_i>0$ for all $i=1,\ldots,m$.
So the above definition can be extended to the case  $|S|=\infty$ as long as all the $u_i$'s are positive.
\cut{
\end{enumerate}}
\end{remark}

\subsection{Statement of the Conjecture}

The conjecture of Cohen-Martinet \cite[Hypothesis 6.6]{CM90} says the following.

\begin{conjecture}[Cohen and Martinet \cite{CM90}]\label{C:Galois} Let $S$ be a finite set of prime numbers such that the primes in $S$ are relatively prime to $|\Gamma|$, and $\uu\in \Q^{m-1}$, and $X=X((1-e_1)\Q[\Gamma],\uu,(1-e_1)\Z_S[\Gamma])$  the random module defined above. Then, for every ``reasonable'' non-negative function $f$ defined on the set of isomorphism classes of finite $(1-e_1)\Z_S[\Gamma]$-modules, we have
\[
\lim_{x\to\infty}\frac{\sum_{|\Disc K|\leq x}f((1-e_1)\Cl^S_{K})}{\sum_{|\Disc K|\leq x}1}=\E(f(X)),
\]
where the sum is over all $\Gamma$-extensions $K|K_0$ and the rank of $K|K_0$ is $\uu$
  (and no conjecture is made if the sums are empty).
\end{conjecture}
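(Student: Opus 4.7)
The plan is to acknowledge at the outset that Conjecture~\ref{C:Galois} is a deep open conjecture in arithmetic statistics, so what follows outlines the strategy one could hope to execute rather than a genuine proof. The first step is a reduction to moments. A ``reasonable'' function $f$ on isomorphism classes of finite $(1-e_1)\Z_S[\Gamma]$-modules should be expressible as a (formal) linear combination of surjection-counting functions $G\mapsto|\Sur_\Gamma(G,H)|$, by the standard surjection-moment duality pervasive in the Cohen-Lenstra literature. Combined with Theorem~\ref{T:InttoMomDet}, this reduces the full distributional statement to the family of moment identities
\[
\lim_{x\to\infty}\frac{\sum_{|\Disc K|\leq x}|\Sur_\Gamma((1-e_1)\Cl_K^S,H)|}{\sum_{|\Disc K|\leq x}1}=|H^{\Gamma_\infty}|^{-1},
\]
one for each finite $(1-e_1)\Z_S[\Gamma]$-module $H$ with $H^\Gamma=1$, where on the left the sum is refined to $\Gamma$-extensions with fixed decomposition group $\Gamma_\infty$ at infinity (the original rank version is recovered by summing over the $\Gamma_\infty$ compatible with a given rank $\uu$).

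Next I would attack the moment identity itself. By class field theory, a surjection $(1-e_1)\Cl_K^S\twoheadrightarrow H$ corresponds to an unramified $H$-extension $L/K$ equipped with a compatible $\Gamma$-action, so counting such surjections over $\Gamma$-extensions $K/K_0$ becomes the problem of counting $G$-extensions of $K_0$, where $G$ is an appropriate extension of $\Gamma$ by $H$, weighted by realization data and with prescribed ramification and archimedean behavior. In a handful of small cases this counting is feasible: Davenport--Heilbronn and Datskovsky--Wright handle $\Gamma$ trivial with $H=\Z/3\Z$ via binary cubic forms; Bhargava handles $\Gamma=S_3$ with $H=\Z/2\Z$ via pairs of ternary quadratic forms; and over $\F_q(t)$ the work of Ellenberg--Venkatesh--Westerland and its successors replaces geometric parametrizations by homological stability on Hurwitz spaces to obtain all moments in the $q\to\infty$ limit. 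For each new pair $(\Gamma,H)$ one would need analogous parametrizations or orbit-counting input.

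The main obstacle, and the reason Conjecture~\ref{C:Galois} remains open, is at the moment step. For a general pair $(\Gamma,H)$ there is no known geometric parametrization of the extensions being counted, and even when one exists, passing from finitely many individual moments to the full distribution requires uniform control as $|H|$ grows. A careful treatment would also need to pin down precisely which class of functions $f$ qualifies as ``reasonable'' --- for instance, those supported on modules of bounded exponent, or those whose surjection-moment series converge absolutely --- and verify that the reduction to moments is rigorous on that class. My proposal is therefore best regarded as a clean reduction: Theorem~\ref{T:InttoMomDet} collapses the distributional conjecture into the sequence of $H$-moment conjectures, which one attacks individually, with no currently known route through the general case.
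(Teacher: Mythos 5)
The statement you were asked about is a \emph{conjecture} (Cohen--Martinet, \cite[Hypothesis 6.6]{CM90}), not a theorem; the paper states it and studies its consequences, but neither the paper nor anyone else proves it. You correctly recognize this from the outset, which is the right call --- a ``blind proof'' here would be a red flag, not a success.

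Your proposed reduction is the natural one and aligns with the machinery the paper actually builds: Theorem~\ref{T:Mom} computes the moments of the conjectural distribution, and Theorem~\ref{T:InttoMomDet} (via Theorem~\ref{thm: moments determine distribution finite case}) shows those moments determine it. So reducing Conjecture~\ref{C:Galois} to the family of $H$-moment identities is exactly the strategy the authors emphasize as the most tractable route, and you correctly cite the handful of cases (Davenport--Heilbronn, Bhargava, the function-field results) where individual moments are known. Two small imprecisions are worth flagging. First, the conjecture is over a general base $K_0$; your moment identity should read $\prod_{v\mid\infty}|H^{\Gamma_v}|^{-1}$ over the infinite places of $K_0$, with $|H^{\Gamma_\infty}|^{-1}$ being the $K_0=\Q$ specialization (Theorem~\ref{thm:u in Cohen-Martinet}). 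Second, the reduction ``reasonable $f$ = linear combination of surjection counters'' is not literally how Theorem~\ref{T:InttoMomDet} is applied --- that theorem is a uniqueness statement about random variables, so what the reduction actually gives is that \emph{if} the empirical averages of $|\Sur_\Gamma(\cdot,H)|$ converge to the predicted moments for all $H$, then the empirical distribution converges to the Cohen--Lenstra--Martinet one; one would then still need to argue separately that convergence in distribution implies convergence of $\E(f)$ for each ``reasonable'' $f$, which requires a genuine delimitation of that class (an issue the paper also leaves open, pointing to \cite[Section 7]{Bartel2018}). As a strategic sketch your proposal is sound and consonant with the paper; as a proof it is, as you say yourself, not one.
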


The cases when \(K_0=\Q\) and either $\Gamma$ is abelian and $K$ is totally real, or $|\Gamma|=2$, are the earlier conjecture of Cohen-Lenstra \cite[Fundamental Assumptions 8.1]{CL84}.

\begin{remark}
In \cite{CM90}, a quantity $M_{\uu}^S(f)$ appears in place of $\E(f(X))$.
The identity $M_{\uu}^S(f)=\mathbb{E}\big(f(X)\big)$ is proved in Proposition \ref{Prop: M(f)=E(f(X))}. 
Also the \(S\)-part of the relative class group \(\Cl^S_{K|K_0}\) appears in place of \((1-e_1)\Cl^S_K\). 
In Lemma \ref{lemma: e-relative class group=e-class group}, we show that these are actually the same.
Note that \(e_1\Cl^S_K=\Cl^S_{K_0}\). Therefore we only consider the \((1-e_1)\)-part as a random object.
\end{remark}

Cohen and Martinet actually make further conjectures for some primes dividing $|\Gamma|$ and for infinite $S$.  We will give the conjecture for $p\mid |\Gamma|$ in Conjecture~\ref{C:CMfull}.
Given the example of \cite[Theorem 1.1]{Bartel2018} of Bartel and Lenstra, it is probably best to keep the conjecture to finite sets $S$.  The ordering of the fields needs to be changed in the conjecture, 
given the example of \cite[Theorem 1.2]{Bartel2018} of Bartel and Lenstra, who suggest ordering fields by the radical of their discriminant based on work on the second author \cite{Wood2010} that shows this ordering has nice statistical properties for abelian Galois groups.  Malle's work \cite{Malle2008,Malle2010} suggests that we should also require that $S$ does not contain any primes dividing the order of the roots of unity of $K_0$.  
The function field results in \cite{LWZB} suggest that these are all the corrections that need to be made.
See \cite[Section 5.6]{Bhargava2015b} and \cite[Section 7]{Bartel2018} for some discussion of what a ``reasonable'' function might be.

\section{The \texorpdfstring{$|G|^{\uu}$}{|G|u} in Cohen-Martinet}\label{S:u}

In this section, we will find a simpler expression for the $|G|^{\uu}$ term that appears in the conjecture of Cohen and Martinet. We continue the notation from Section~\ref{S:C1}.

\begin{theorem}\label{thm:u in Cohen-Martinet}
Let $K|K_0$ be a $\Gamma$-extension of number fields. For each infinite prime $v$ of $K_0$, let $\Gamma_v$ be a decomposition group 
at $v$. We assume that the set $S$ only contains primes not dividing $|\Gamma|$. 
If $H$ is a finite $(1-e_1)\Z_S[\Gamma]$-module,
then
 \[|H|^{\uu}=\prod_{v|\infty}|H^{\Gamma_v}|,\]
where $v$ runs over all infinite primes of $K_0$.
\end{theorem}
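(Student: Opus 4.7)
The plan is to show both sides of the identity are additive on short exact sequences of finite $(1-e_1)\Z_S[\Gamma]$-modules, reduce to the case where $H$ is a finite $\F_p[\Gamma]$-module for a single prime $p\in S$, and then match the two sides using character theory after Brauer lifting. First I would observe that $\log|H|^{\uu}=\sum_{i=2}^m u_i\log|e_iH|$ is additive because each $e_i$ is a central idempotent, so $H\mapsto e_iH$ is exact; and $\sum_{v\mid\infty}\log|H^{\Gamma_v}|$ is additive because $H^1(\Gamma_v,H')=0$ for any such $H'$ (it is $S$-torsion while $|\Gamma_v|\mid|\Gamma|$ is a unit in $\Z_S$). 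The $\Gamma$-stable decomposition $H=\bigoplus_{p\in S}H_p$ into $p$-primary parts then reduces the claim to $H$ being a finite $\F_p[\Gamma]$-module for a fixed $p\in S$.

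Next I would Brauer-lift: since $p\nmid|\Gamma|$, $\F_p[\Gamma]$ is semisimple and its simple modules are in bijection with those of $\Z_p[\Gamma]$, so there is a $\Z_p[\Gamma]$-module $\tilde H$, free of finite rank over $\Z_p$, with $\tilde H/p\tilde H\cong H$. Set $V=\tilde H\otimes_{\Z_p}\Q_p$ with character $\chi_V$. Since both $e_i$ and the averaging idempotent $\frac{1}{|\Gamma_v|}\sum_{g\in\Gamma_v}g$ lie in $\Z_p[\Gamma]$, we have $\dim_{\F_p}e_iH=\dim_{\Q_p}e_iV$ and $\dim_{\F_p}H^{\Gamma_v}=\dim_{\Q_p}V^{\Gamma_v}$. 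Writing $|H|^{\uu}=p^\alpha$ and $\prod_{v\mid\infty}|H^{\Gamma_v}|=p^\beta$, Frobenius reciprocity gives
\[
\beta=\sum_{v\mid\infty}\langle\Ind_{\Gamma_v}^\Gamma 1_{\Gamma_v},\chi_V\rangle=\langle\chi_K+1,\chi_V\rangle=\langle\chi_K,\chi_V\rangle,
\]
where the last equality uses $\langle 1,\chi_V\rangle=\dim_{\Q_p}V^\Gamma=0$, valid because $e_1V=0$.

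For $\alpha$, extending scalars to $\bar\Q_p$, the idempotent $e_i$ cuts out the isotypic components for the Galois orbit $\{\varphi_i^\sigma\}_\sigma$ of absolutely irreducible characters contained in $\chi_i$, each of degree $h_i$, so $\dim_{\Q_p}e_iV=h_i\sum_\sigma\langle\chi_V,\varphi_i^\sigma\rangle$. Multiplying by $u_i=\frac{1}{h_i}\langle\chi_K,\varphi_i\rangle$ and using that $\chi_K$ is $\Q$-valued so $\langle\chi_K,\varphi_i^\sigma\rangle=\langle\chi_K,\varphi_i\rangle$ for every $\sigma$, I obtain
\[
\alpha=\sum_{i=2}^m\sum_\sigma\langle\chi_K,\varphi_i^\sigma\rangle\langle\chi_V,\varphi_i^\sigma\rangle.
\]
To see $\alpha=\beta$, I expand $\chi_K$ in the orthonormal basis $\{\varphi_i^\sigma\}$: since $\chi_K$ is $\Q$-valued, $\chi_K=\sum_i\langle\chi_K,\varphi_i\rangle\sum_\sigma\varphi_i^\sigma$, and pairing with $\chi_V$ produces exactly the sum defining $\alpha$ once one notes that the $i=1$ contribution vanishes because $\varphi_1=1$ and $\langle 1,\chi_V\rangle=0$.

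The hard part will be the bookkeeping between absolutely irreducible characters over $\bar\Q_p$ and the $\Q$-central idempotents of $\Q[\Gamma]$, and making the Brauer-lift step fully rigorous (in particular verifying compatibility with both the $e_i$-decomposition and taking $\Gamma_v$-invariants when passing from $\tilde H$ to $H$). The key conceptual simplification is that $\chi_K$ is $\Q$-valued, so its coefficients in the expansion over $\{\varphi_i^\sigma\}$ depend only on the Galois-orbit index $i$; this is exactly what makes the sum $\sum_i u_i\dim_{\F_p}e_iH$ collapse onto the character pairing $\langle\chi_K,\chi_V\rangle$.
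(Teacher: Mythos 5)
Your argument is correct and takes a genuinely different route from the paper. The paper reduces to the case $H = e_iH$ and then cites \cite[Theorem 7.3]{CM90}, a structural result producing an abelian group $G_v$ with $H \cong G_v^{\langle\chi,a_\Gamma\rangle}$ and $H^{\Gamma_v} \cong G_v^{\langle\chi,a_{\Gamma/\Gamma_v}\rangle}$ as abelian groups; the exponent identity then falls out of a short computation with $\chi_K$, $a_\Gamma$, and the Schur index. You instead lift to characteristic zero via Brauer lifting and show both sides equal $p^{\langle\chi_K,\chi_V\rangle}$, which is arguably more conceptual and self-contained (it does not invoke the CM90 structure theorem) at the cost of importing the standard machinery of lifting $\F_p[\Gamma]$-modules to $\Z_p$-lattices when $p\nmid|\Gamma|$. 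All your character-theoretic steps check out: $e_1V=0$ follows from $e_1H=0$ by Nakayama, the $\beta$ computation uses Frobenius reciprocity and $\langle 1,\chi_V\rangle=0$, and for $\alpha$ the collapse of $\sum_i u_i\dim e_iV$ onto $\langle\chi_K,\chi_V\rangle$ correctly exploits that $\chi_K$ is rational-valued, with the $i=1$ term vanishing for the same reason. One small point to tighten: the $p$-primary part $H_p$ of a finite $(1-e_1)\Z_S[\Gamma]$-module is a $\Z_{(p)}[\Gamma]$-module but not automatically killed by $p$; to get to the $\F_p[\Gamma]$-module case you should filter $H_p$ by its $p^j$-torsion subgroups $H_p[p^j]$ (which are $\Gamma$-stable with $\F_p[\Gamma]$-module quotients) and then invoke the additivity you set up at the start. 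Since you already established additivity of both sides on short exact sequences via exactness of $e_i(-)$ and vanishing of $H^1(\Gamma_v,-)$, this is a one-line addition.
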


\begin{proof}By the definition of $|H|^{\uu}$, the theorem reduces to the case of a
$\Z_S[\Gamma]$-module $H$ such that $H=e_iH$ for some $i>1$.
Let $e\neq e_1$ be a central irreducible idempotent of $\Q[\Gamma]$ associated to the $\Q$-irreducible character $\chi$ and rank $u$, and let $H$ be a finite $e\Z_S[\Gamma]$-module. We first show the following identity
 \[\lvert H^{\Gamma_v}\rvert=\lvert H\rvert^{\frac{\langle\chi,a_{\Gamma/\Gamma_v}\rangle}{\langle\chi,a_\Gamma\rangle}}\]
for each infinite place $v$ of $K_0$, where  for a 
subgroup $\Delta\subseteq\Gamma$ we define
$a_{\Gamma/\Delta}:=-1+\Ind_{\Delta}^\Gamma 1_{\Delta}$ to be the augmentation character of $\Delta$  and $a_\Gamma:=a_{\Gamma/1}$. By \cite[Theorem 7.3]{CM90}, for each $v$, there exists some abelian group $G_v$ such that, as abelian groups, we have
 \[H=eH\cong G_v^{\langle\chi,a_\Gamma\rangle}\quad\text{and}\quad H^{\Gamma_v}=(eH)^{\Gamma_v}\cong G_v^{\langle\chi,a_{\Gamma/\Gamma_v}\rangle},\]
hence the identity.

Note that $\chi_K=-1+\sum_{v|\infty}(a_{\Gamma/\Gamma_v}+1)$, and that $\langle\chi,1\rangle=0$. We then know that
 \[\prod_{v|\infty}|H^{\Gamma_v}|=\prod_{v|\infty}\lvert H\rvert^{\frac{\langle\chi,a_{\Gamma/\Gamma_v}\rangle}{\langle\chi,a_\Gamma\rangle}}=\lvert H\rvert^{\frac{\langle\chi,\chi_K\rangle}{\langle\chi,a_\Gamma\rangle}}.\]
If we denote by $\varphi$ a fixed absolutely irreducible character contained in $\chi$ and let $\{\varphi_1,\dots,\varphi_j\}$ be the set of all the distinct conjugates of $\varphi$, then
 \[\chi=d\sum_{i=1}^{j}\varphi_i.\]
 where $d$ is the Schur index. 
 So we have
 \[\langle\chi,\chi_K\rangle=d\sum_{i=1}^j\langle\varphi_i,\chi_K\rangle=dj\langle\varphi,\chi_K\rangle.\]
On the other hand, since the character $\varphi$ is absolutely irreducible,
 \[\langle\chi,a_\Gamma\rangle=d\sum_{i=1}^j\langle\varphi_i,a_\Gamma\rangle=dj\varphi(1)=djh\]
 where $h$ is the $h_i$ of Section~\ref{S:ssNot}, and one can check $h=\dim \varphi$.
We then know that
 \[\prod_{v|\infty}\lvert H^{\Gamma_v}\rvert=\lvert H\rvert^{\frac{\langle\chi,\chi_K\rangle}{\langle\chi,a_\Gamma\rangle}}=|H|^{\frac{1}{h}\langle\varphi,\chi_K\rangle}=\lvert H\rvert^u=\lvert H\rvert^{\uu}\]
completing the proof.
\end{proof}

\begin{remark}
Actually the statement of Theorem~\ref{thm:u in Cohen-Martinet} 
can be extended to some primes dividing $\lvert\Gamma\rvert$. Let $e$ be a central idempotent in $\Q[\Gamma]$ such that $e_1\cdot e=0$ and $S$ be a set of primes such that $e\in\Z_S[\Gamma]$ and $e\Z_S[\Gamma]$ is a maximal order in $e\Q[\Gamma]$ (i.e. $S$ only contains \emph{good primes} for $e$, see the definition in Section~\ref{S:NG}). If $H$ is a finite $e\Z_S[\Gamma]$-module, then
 \[|H|^{\uu}=\prod_{v|\infty}|H|^{\Gamma_v}.\]
The proof is  the same as above because Theorem 7.3 in \cite{CM90} still holds in this case.
\end{remark}

\section{Probabilities inversely proportional to automorphisms}\label{S:getaut}

Since the Cohen-Lenstra and Cohen-Martinet conjectures are rooted in the philosophy that objects appear inversely proportional as often as their number of automorphisms, it is natural to ask why there is a term $|G|^{\uu}$ in the conjectures at all.  One answer is that it was necessary to match computational evidence, and other heuristic explanations are given in \cite[Section 8]{CL84}.  In this section, we give another perspective, over the base field $\Q$, in which we see class groups as a part of a larger structure where $|G|^{\uu}|\Aut(G)|$ is the number of automorphisms of the larger structure.
 Bartel and Lenstra \cite{Bartel2018} have given a different perspective on interpreting these probabilities, over a general number field, as inversely proportional to the automorphisms of a larger object, in their case, the Arakelov class groups.  In contrast, our larger objects below are only  slightly larger than the class groups, and in particular, finite.

We choose an embedding $\bar{\Q}\sub\C$ so that $\Gal(\bar{\Q}/\Q)$ has a canonical decomposition group $\Gal(\C/\R)$ at $\infty$.
 We fix a map $s:\Gal(\C/\R)\to\Gamma$, let $K\sub \bar{\Q}$ be a Galois extension of $\Q$, let $\Gamma=\Gal(K/\Q)$, and let the decomposition group at $\infty$ given by $s$.
Let $K'$ be the maximal unramified abelian extension of $K$ in $\bar{\Q}$ of order prime to $|\Gamma|$.  
The structure we consider  is the finite group $G:=\Gal(K'/\Q)$ with given maps
$$
c: \Gal(\C/\R) \ra G \quad \quad \textrm{and} \quad \quad \pi: G \ra \Gal(K/\Q)=\Gamma,
$$
where $\pi$ is a surjection with abelian kernel.  Of course, $\ker(\pi)=\Cl^S_K$ (where $S$ is the set of primes not dividing $|\Gamma|$) is naturally a $\Gamma$-module, but  the data $(G,c,\pi)$ is a little more.  In fact, it is a \emph{class triple} as defined below.



\begin{definition} For a given map $s:\Gal(\C/\R)\to\Gamma$, we call $(G,c,\pi)$ a \emph{class triple} (for $s$) if $G$ is a finite group satisfying the following conditions:

\begin{enumerate}[i)]

\item  $\pi:G\to\Gamma$ is a surjective homomorphism such that $\ker\pi$ is an abelian group whose order is coprime to $|\Gamma|$;

\item $c:\Gal(\C/\R) \to G$ is a homomorphism such that $\pi\circ c=s$;

\item $\ker\pi^{\Gamma}=1$ (where $\Gamma$ acts by conjugation by preimages in $G$);

\item $\im c \cap \ker \pi =1$.

\end{enumerate}
Then for two class triples $(G_1,c_1,\pi_1)$ and $(G_2,c_2,\pi_2)$, a morphism $\tau$ is a group homomorphism $G_1\to G_2$ such that $\pi_1=\pi_2\circ\tau$ and that $\tau\circ c_1=c_2$.  
\end{definition}

\begin{theorem}
For a given map $s:\Gal(\C/\R)\to\Gamma$ and a class triple $(G,c,\pi)$, we have
$$
\lvert\Aut(G,c,\pi)\rvert=\lvert\ker\pi^{\im(s)}\rvert \lvert\Aut_\Gamma(\ker\pi)\rvert.
$$
Further, given a finite $\Gamma$-module $H$ of order relatively prime to $|\Gamma|$ with $H^{\Gamma}=1$, there is a unique 
isomorphism class of class triples for $s$ with $\ker \pi$ isomorphic to $H$ as a $\Gamma$-module.
\end{theorem}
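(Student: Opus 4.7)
The plan is to reduce the entire theorem to standard cohomological book-keeping, exploiting that $|\ker\pi|$ is coprime to $|\Gamma|$, which forces $H^i(\Gamma,N)=0$ for $i\geq 1$ where $N:=\ker\pi$. First I would split the extension $1\to N\to G\to\Gamma\to 1$ via Schur-Zassenhaus. Since all complements in $G$ are $N$-conjugate, and $\im c$ is already a complement to $N$ inside $\pi^{-1}(\Gamma_\infty)$ (where $\Gamma_\infty:=\im s$), one can arrange a full complement $\Gamma\hookrightarrow G$ restricting to $\im c$ over $\Gamma_\infty$. After this normalization we may write $G=N\rtimes\Gamma$ with $c(h)=(0,s(h))$.

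Next, I would parametrize automorphisms. Any $\tau\in\Aut(G,c,\pi)$ restricts to a $\Gamma$-equivariant automorphism $\alpha$ of $N$ (the two preimages of any $\gamma\in\Gamma$ under $\pi$ differ by an element of $N$, which commutes with $N$ since $N$ is abelian) and descends to the identity on $\Gamma$. A short calculation shows every such $\tau$ has the form $\tau(n,\gamma)=(\alpha(n)+\delta(\gamma),\gamma)$ for a unique pair $(\alpha,\delta)$ with $\alpha\in\Aut_\Gamma(N)$ and $\delta\colon\Gamma\to N$ a $1$-cocycle; moreover, the condition $\tau\circ c=c$ is equivalent to $\delta|_{\Gamma_\infty}=0$. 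Because $H^1(\Gamma,N)=0$ the coboundary map $N\to Z^1(\Gamma,N)$, $n\mapsto(\gamma\mapsto\gamma n-n)$, is surjective, and because $N^\Gamma=1$ it is injective. Hence $Z^1(\Gamma,N)\cong N$ canonically, and its subset of cocycles vanishing on $\Gamma_\infty$ identifies with $N^{\Gamma_\infty}$. Multiplying, $|\Aut(G,c,\pi)|=|\Aut_\Gamma(N)|\cdot|N^{\Gamma_\infty}|$.

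For the second statement, existence is constructed directly: take $G:=H\rtimes\Gamma$, $\pi$ the projection, and $c(h):=(0,s(h))$; all four class triple axioms follow from $H^\Gamma=1$ and the coprime order of $H$. For uniqueness, given two class triples with $\ker\pi\cong H$, the same normalization presents them as $(H\rtimes\Gamma,c_1,\pi)$ and $(H\rtimes\Gamma,c_2,\pi)$ with $c_i(h)=(\eta_i(h),s(h))$ and each $\eta_i$ a $1$-cocycle for $\Gal(\C/\R)$ acting via $s$. Seeking an isomorphism of the form above with $\alpha=\operatorname{id}$ reduces to finding $n\in H$ satisfying $s(h)n-n=\eta_2(h)-\eta_1(h)$. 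If $s$ is injective, then $|\Gal(\C/\R)|=2$ divides $|\Gamma|$, so $2\nmid|H|$ and $H^1(\Gal(\C/\R),H)=0$; if $s$ is trivial, then axiom (iv) forces $\eta_1=\eta_2=0$. In either case $\eta_2-\eta_1$ is a coboundary, supplying the required $n$.

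The main technical obstacle is the preliminary normalization that synchronizes the splitting $G=N\rtimes\Gamma$ with the datum of $c$; once done, the identifications $Z^1(\Gamma,N)\cong N$ and ``cocycles vanishing on $\Gamma_\infty$'' $=N^{\Gamma_\infty}$ follow formally, and the structural class triple axioms do exactly the work needed to make the count come out as $|N^{\Gamma_\infty}|\cdot|\Aut_\Gamma(N)|$.
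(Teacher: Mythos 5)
Your proof is correct and follows the same skeleton as the paper's: split $G=\ker\pi\rtimes\Gamma$ by Schur--Zassenhaus, observe that an automorphism of $(G,\pi)$ is captured by a pair (a $\Gamma$-automorphism of $\ker\pi$, a splitting of $G\to\Gamma$), identify the splittings with $\ker\pi$, and then count which pairs fix $c$. Where you differ is purely in presentation: the paper keeps $c$ general and does the bookkeeping with explicit conjugation by group elements, whereas you first normalize the Schur--Zassenhaus splitting so that $c(h)=(0,s(h))$, phrase the splittings as $Z^1(\Gamma,\ker\pi)$, and invoke $H^1(\Gamma,\ker\pi)=0$ together with $(\ker\pi)^\Gamma=1$ to get $Z^1\cong\ker\pi$ canonically; the condition $\tau\circ c=c$ then transparently becomes ``cocycle vanishes on $\Gamma_\infty$,'' i.e.\ $n\in(\ker\pi)^{\Gamma_\infty}$. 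The cohomological framing makes the final count $|\Aut_\Gamma(\ker\pi)|\cdot|(\ker\pi)^{\Gamma_\infty}|$ drop out cleanly, and your uniqueness argument (existence by the trivial splitting, then either $H^1(\Gal(\C/\R),H)=0$ when $s$ is injective, or axiom (iv) forcing the cocycles to vanish when $s$ is trivial) spells out a case split that the paper leaves implicit. The two proofs are the same argument in different clothes; yours is a bit more systematic, the paper's a bit more hands-on.
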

\begin{proof}

Let $A$ be the group of  automorphisms of $(G,c,\pi)$, and since each such automorphism preserves $\ker \pi$ (set-wise) and respects  $\pi$, 
we have a homomorphism
$$
A \ra \Aut_\Gamma(\ker\pi).
$$
By the Schur-Zassenhaus theorem, we can write $G=\ker \pi \rtimes \Gamma$ (non-canonically), and so in this notation an element $\tau\in A$ is determined by where it sends $\ker \pi$ and $\Gamma$.  Further, since $\pi=\pi \circ \tau$, it follows that $\tau$ sends $\Gamma$ to another splitting of $G\ra\Gamma$.  By Schur-Zassenhaus all the splittings are conjugate by elements of $\ker \pi$.

So we have a map from $\ker \pi$ to the splittings $\Gamma \ra G$.  We claim this give $|\ker \pi|$ distinct splittings.
We have
$$
(n,1)(1,\gamma)(n,1)^{-1}=(n(n^{-1})^{\gamma^{-1}},\gamma).
$$
Suppose that for all $\gamma\in \Gamma$, we have
$$
n_1(n_1^{-1})^{\gamma^{-1}}
=n_2(n_2^{-1})^{\gamma^{-1}}, 
$$
i.e., $n_2^{-1}n_1=(n_2^{-1}n_1)^{\gamma^{-1}}$.
By the definition of class triple, this implies $n_1=n_2$ and so we have $|\ker \pi|$ splittings.  

Any element $\Aut_\Gamma(\ker\pi)$ and any splitting $\Gamma \ra H$ combine to give an automorphism of $(G,\pi)$ by the definition of semi-direct product.
We next determine which of these automorphisms preserves $c$. 
Let $K\sub G$ be $K:=\pi^{-1}(\im \pi \circ c)$.  So we have
$$
1\ra \ker \pi \ra K \ra \im \pi \circ c \ra 1.
$$
Since $\im c \cap \ker \pi =1$, one splitting of the above is $\im \pi \circ c\ra \im c.$
Another splitting is $\im \pi \circ c\ra 1 \times \im \pi \circ c \sub \ker \pi \rtimes \Gamma$ according to our chosen splitting above.
By Schur-Zassenhaus, these two splittings are conjugate by an element $(n,1)$ for some $n\in \ker\pi$.

So let $I=\im \pi \circ c$.  Then the elements of $\im c$ are $(n,1)(1,\gamma)(n^{-1},1)=
(n(n^{-1})^{\gamma^{-1}},\gamma)$ for $\gamma\in I$.  
These elements are fixed by the element of $\Aut(G,\pi)$ that comes from $\psi\in \Aut_\Gamma(\ker\pi)$ and conjugation of $\Gamma$ by $(m,1)$ if and only if for all $\gamma\in I$, 
$$
(m,1)(\psi(n(n^{-1})^{\gamma^{-1}}),\gamma)(m^{-1},1)=(n(n^{-1})^{\gamma^{-1}},\gamma)
$$
i.e. 
$$
n^{-1}m\psi(n)=(n^{-1}m\psi(n))^{\gamma^{-1}}
$$
i.e.
$n^{-1}m\psi(n)$ is fixed by $I$, i.e $m\in n^{-1}(\ker\pi)^{I} \psi(n)$.
Thus we conclude that exactly $|\Aut_\Gamma(\ker\pi)||(\ker\pi)^{I}|$ elements of $\Aut(H,\pi)$ preserve $c$, which proves the first statement of the theorem.

For the second statement of the theorem, by Schur-Zassenhaus, any class triple giving $H$ has $G\isom H\rtimes \Gamma$.  Choosing $c$ to be $s$ composed with the trivial splitting $\Gamma \ra H\rtimes \Gamma$ gives at least one class triple giving $H$.  As we saw above, any other choice of $c$ differs by
conjugation by an element of $H$, i.e. differs by an automorphism of $H\rtimes \Gamma$ fixing the map to $\Gamma$.
\end{proof}

\begin{corollary}
Let $K\sub \bar{\Q}$ be a Galois extension of $\Q$
 with Galois group $\Gamma$ and decomposition group  $\Gamma_\infty$ at $\infty$ and map $s:\Gal(\C/\R)\to \Gamma_\infty \sub \Gamma $. 
Let $G:=\Gal(K'/\Q)$ with given maps
$$
c: \Gal(\C/\R) \ra G \quad \quad \textrm{and} \quad \quad \pi: G \ra \Gal(K/\Q)=\Gamma,
$$ Let $S$ be the set of primes not dividing $|\Gamma|$.
Then 
$$
|\Aut(G,c,\pi)|=|(\Cl_K^{S})^{\Gamma_\infty}||\Aut_\Gamma(\Cl_K^{S})|.
$$ 
\end{corollary}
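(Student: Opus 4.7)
The plan is to deduce this as a direct consequence of the preceding theorem, once I verify that $(G, c, \pi)$ is a class triple for $s$ and that its kernel is isomorphic to $\Cl_K^S$ as a $\Gamma$-module. Granting this, since $\im s = \Gamma_\infty$, the theorem will immediately yield
$$|\Aut(G,c,\pi)| = |(\ker\pi)^{\Gamma_\infty}||\Aut_\Gamma(\ker\pi)| = |(\Cl_K^S)^{\Gamma_\infty}||\Aut_\Gamma(\Cl_K^S)|,$$
which is the claim.

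To verify the class triple axioms I will argue as follows. For (i), class field theory identifies $\ker\pi = \Gal(K'/K)$ with $\Cl_K^S$ via the Artin reciprocity map; this identification is $\Gamma$-equivariant when $\Gamma$ acts on $\Gal(K'/K)$ by conjugation by a lift in $G$ and on $\Cl_K^S$ by the natural Galois action, so in particular $\ker\pi$ is abelian of order supported on $S$, hence coprime to $|\Gamma|$. Condition (ii) is essentially by construction: the fixed embedding $\Qbar \sub \C$ singles out an infinite place of $K'$ with decomposition subgroup $\im c \sub G$, and by design the induced map to $\Gamma$ is $s$. For (iii), since $|\Gamma|$ is invertible in $\Z_S$, the $\Gamma$-invariants of $\Cl_K^S$ coincide with $e_1 \Cl_K^S$; and the latter vanishes because $e_1$ acts as $|\Gamma|^{-1}\Nm_{K/\Q}$ (followed by extension back to $K$) while $\Cl_{\Q}^S = 0$.

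The one axiom needing a bit of archimedean care is (iv). Because $K'/K$ is unramified at every infinite place (real places of $K$ lift to real places of $K'$, complex to complex), the local extension $K'_w/K_v$ is trivial for the infinite place $w$ of $K'$ singled out by $\Qbar \sub \C$ and its restriction $v$ to $K$. Consequently $\pi$ restricts to an isomorphism from the decomposition subgroup $\im c \sub G$ onto the decomposition subgroup of $v$ in $\Gamma$, which forces $\im c \cap \ker\pi = 1$. This archimedean check, together with confirming that the class field theoretic isomorphism $\Gal(K'/K) \isom \Cl_K^S$ is genuinely $\Gamma$-equivariant, are the only points requiring real verification; both are standard, and with them in hand the corollary follows at once from the preceding theorem by specializing $\ker\pi$ to $\Cl_K^S$ and $\im s$ to $\Gamma_\infty$.
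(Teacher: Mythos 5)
Your proposal is correct and takes the only sensible route, which is the one the paper leaves implicit: it states just before the theorem that the data $(G,c,\pi)$ coming from the Hilbert class field is a class triple, and the corollary is then a direct specialization of the theorem with $\ker\pi \cong \Cl_K^S$ and $\im s = \Gamma_\infty$. Your careful verifications of the four axioms (in particular the $\Gamma$-equivariance of Artin reciprocity for (i), the vanishing of $e_1\Cl_K^S$ for (iii), and the archimedean unramifiedness argument for (iv)) are exactly the checks that the paper omits, and all are sound.
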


So, combining with Theorem~\ref{thm:u in Cohen-Martinet}, we see that the probabilities in the Cohen-Lenstra and Cohen-Martinet conjectures are inversely proportional to the number of automorphisms of the class triples associated to the fields (which are determined up to isomorphism by their class groups and decomposition groups but have a different number of automorphisms from their class groups). 

\further{
\begin{example}
As a concrete special case, consider quadratic extensions and how often the class group is $\Z/3\Z$.  We see that $S_3$ has 6 auts when there is no order $2$ element fixed, but 2 auts with an order $2$ element fixed.  Comparing this to trivial class group, $C_2$ has 1 aut with or without an order $2$ element fixed, so we see the extra factor of $3$ here in the totally real case, where there is no order $2$ element to fix.  
\end{example}}

\section{Moments of the Cohen-Lenstra-Martinet Random Groups}\label{S:moments}

In this section, we will find the moments of the Cohen-Lenstra-Martinet random $\Gamma$-modules, and moreover show that their distributions are determined by their moments.

\subsection{Moments for Galois Extensions}
We keep the notation from Section~\ref{S:ssNot}. However, in this section, we will take the set $S$ of prime to be not necessarily finite.  We will also define a slightly more general notion of random modules.

\begin{definition}(Random $\fO$-modules) Let $A$ be any finite dimensional semisimple $\Q$-algebra with $m$ simple factors. Let $S$ be a set of prime numbers, $\fO$ be a $\Z_S$-maximal order of $A$, and $\uu\in\Q^m$ be a fixed $m$-tuple. If either $S$ contains finitely many primes or $u_i>0$ for all $i=1,\ldots,m$, then we define $X=X(A,\uu,\fO)$ to be a random finite $\fO$-module such that for all finite $\fO$-module $G_1$ and $G_2$, we have
 \[\frac{\mathbb{P}(X\cong G_1)}{\mathbb{P}(X\cong G_2)}=\frac{|G_2|^{\uu}|\Aut_\fO(G_2)|}{|G_1|^{\uu}|\Aut_\fO(G_1)|}.\]
\end{definition}

When $S$ does not contain any primes dividing $|\Gamma|$, then $\Z_S[\Gamma]$ is a maximal order in $\Q[\Gamma]$ (and so $(1-e_1)\Z_S[\Gamma]$ is a maximal order in $(1-e_1)\Q[\Gamma]$), and our previous definition of $X$ is a special case of the above.
As in Remark~\ref{remark:class groups have trivial invariant part},  $X$ is well-defined.

\cut{
\begin{proposition}\label{prop:Z(u)<infty} Let $\uu\in\Q^m$ be an $m$-tuple such that $u_i\geq0$ for all $i=1,\dots,m$. If either $S$ contains finitely many primes or $u_i>0$ for all $i=1,\ldots,m$, then   
$$
\sum_G\frac{1}{|G|^{\uu}|\Aut_\fO(G)|}<\infty
$$
where $G$ runs over all isomorphism classes of finite $\fO$-modules.\end{proposition}

To prove the proposition, we recall the following definitions from \cite{CM90}.}

\cut{
\begin{definition}(\cite[3.4, 4.2]{CM90}) 
Let $\Z_{K_i}$ be the ring of integers in $K_i$.
For every maximal ideal $\mathfrak{p}\subseteq\Z_{K_i}$, and a  $m$-tuple $\uu\in\Q^m$, define

\begin{equation*}Z^{A,\fp}(\uu)=\sum_{G/\sim}|\Aut_\fO(G)|^{-1}|G|^{-\uu}\end{equation*}
where $G$ runs over the isomorphism classes of finite $\fO$-modules annihilated by a power of $\fp$, and

\begin{equation*}Z^A(\uu)=\sum_{G/\sim}|\Aut_\fO(G)|^{-1}|G|^{-\uu}\end{equation*}
where $G$ runs over the isomorphism classes of finite $\fO$-modules. If $a$ is a function defined on the isomorphism classes of finite $\fO$-modules, then define the Dirichlet series $Z(a,\uu)$ as
 \[Z(a,\uu):=\sum_{G/\sim}\frac{a(G)}{\lvert\Aut_\fO(G)\rvert\lvert G\rvert^{\uu}},\]
where $G$ runs over the isomorphism classes of finite $\fO$-modules.
\end{definition}
}

\cut{
\begin{proof}[Proof of the proposition]
\melanie{This is too straightforward compared to the rest of our work to include}
For the case when $|S|=\infty$, and $u_i>0$ for all $i=1,\dots,m$, see \S3 in \cite{CM90}.
\melanie{In our remark earlier, we said this was by similar argument.}

First, we consider the case when $|S|<\infty$. For each prime $\fp$ of $K_i$ lying above $p$, we let $q$ be the absolute norm of $\fp$. Then we have

\begin{equation*}
\sum_{G/\sim}|\Aut_\fO(G)|^{-1}|G|^{-\uu}=Z^{A,\mathfrak{p}}(\uu)=\prod_{j=1}^\infty\big(1-q^{-(h_iu_i+jd_{i,\mathfrak{p}})}\big)^{-1}
\end{equation*}

where $G$ runs through all isomorphism classes of finite $\fO$-modules annihilated by a power of $\fp$ by \cite[3.6]{CM90}. Therefore

\begin{equation*}
Z^{A,\mathfrak{p}}(\underline{u})\leq\prod_{j=1}^\infty\big(1-q^{-jd_{i,\mathfrak{p}}})^{-1}
\end{equation*}
Now we want to show that this infinite product is finite.
\begin{equation*}\begin{aligned}
\log Z^{A,\mathfrak{p}}(\underline{s})&=\sum_{j=1}^\infty\log\big(1-q^{-jd}\big)^{-1}=\sum_{j=1}^\infty\log\left(1+\frac{1}{q^{jd}-1}\right)\\
&\leq\sum_{j=1}^\infty\frac{1}{q^{jd}-1}\leq\sum_{j=1}^\infty\frac{1}{q^{jd-1}}=\frac{1}{q^{d-1}}\frac{1}{1-q^{-d}}
\end{aligned}\end{equation*}
Thus we conclude that $\sum |\Aut_\fO(G)|^{-1}|G|^{-\uu}<\infty$ where $G$ runs through all isomorphism classes of finite $\mathfrak{O}$-modules annihilated by a power of $\mathfrak{p}$ and $\uu\geq\underline{0}$. Then by taking the product over the prime ideals lying above the primes in $S$, we prove the proposition.
\end{proof}
}

Now given $H$ a finite $\fO$-module, consider the function $|\operatorname{Sur}_\fO(G,H)|$ counting the number of surjective $\fO$-morphisms from $G$ to $H$. Then we have the following formula to compute the moments of $X$.

\begin{theorem}\label{thm: formula for moments} Given a finite $\fO$-module $H$, we have
 \[\mathbb{E}\left(\lvert\Sur_\fO(X,H)\rvert\right)=\frac{1}{|H|^{\uu}}.\]
\end{theorem}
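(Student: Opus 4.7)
The plan is to expand the expectation as a ratio of two absolutely convergent sums (convergence being guaranteed by the hypothesis that either $S$ is finite or all $u_i$ are positive) and to show that the numerator equals $Z(\uu)/|H|^{\uu}$, where $Z(\uu) := \sum_G \frac{1}{|G|^{\uu}|\Aut_\fO(G)|}$ is the partition function in the denominator. I would first rewrite
\[\sum_G \frac{|\Sur_\fO(G, H)|}{|G|^{\uu}|\Aut_\fO(G)|} = \sum_{(G, \phi)/\sim} \frac{1}{|G|^{\uu}|\Aut_\fO(G, \phi)|},\]
where the right-hand sum is over isomorphism classes of pairs $(G, \phi \colon G \twoheadrightarrow H)$ and $\Aut_\fO(G, \phi) := \{\alpha \in \Aut_\fO(G) : \phi \circ \alpha = \phi\}$, and then regroup by the isomorphism class of $K := \ker \phi$. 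Since each central idempotent $e_i$ sends a short exact sequence of $\fO$-modules to a short exact sequence, we have the multiplicativity $|G|^{\uu} = |K|^{\uu}|H|^{\uu}$; this is the algebraic mechanism that produces the $|H|^{-\uu}$ factor in the final answer.

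For a fixed isomorphism class of $K$, the set of isomorphism classes of pairs $(G, \phi)$ with $\ker \phi \cong K$ is in natural bijection with the set of $\Aut_\fO(K)$-orbits on $\operatorname{Ext}^1_\fO(H, K)$, via a choice of identification $K \cong \ker \phi$. For such a pair with extension class $[E]$, one has the short exact sequence of groups
\[1 \to \Hom_\fO(H, K) \to \Aut_\fO(G, \phi) \to \Stab_{\Aut_\fO(K)}([E]) \to 1,\]
where the kernel consists of automorphisms acting trivially on both $K$ and $H$ (identified with $\Hom_\fO(H, K)$ in the standard way) and the right map is restriction to $K$. Combining with the orbit-stabilizer theorem yields
\[\sum_{\substack{(G, \phi)/\sim \\ \ker \phi \cong K}} \frac{1}{|\Aut_\fO(G, \phi)|} = \frac{|\operatorname{Ext}^1_\fO(H, K)|}{|\Hom_\fO(H, K)| \cdot |\Aut_\fO(K)|}.\]

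The main obstacle is the identity $|\operatorname{Ext}^1_\fO(H, K)| = |\Hom_\fO(H, K)|$ for finite $\fO$-modules, which I would establish using that $\fO$ is hereditary as a maximal order in a semisimple $\Q$-algebra. Concretely, take a two-term projective resolution $0 \to P_1 \to P_0 \to H \to 0$ with $P_0, P_1$ finitely generated projective. Since $H$ is torsion, $P_0$ and $P_1$ have the same class in the rational Grothendieck group of $\fO$-projectives; after decomposing along the simple factors of $A$ and applying Morita equivalence to replace each simple factor by a maximal order in a division algebra, this yields $|\Hom_\fO(P_0, K)| = |\Hom_\fO(P_1, K)|$ for every finite $\fO$-module $K$. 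Applying $\Hom_\fO(-, K)$ to the resolution and comparing cardinalities in the resulting four-term exact sequence then gives the desired Hom-Ext identity. Substituting it into the formula above collapses the inner sum to $1/|\Aut_\fO(K)|$, and summing over isomorphism classes of $K$ produces $Z(\uu)/|H|^{\uu}$, completing the proof.
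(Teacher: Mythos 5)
Your proof is correct, but it follows a genuinely different route from the paper's. The paper reduces the computation in a few lines by invoking the Hall-type identity from Cohen--Martinet \cite[Proposition 3.3]{CM90},
\[
\sum_{G/\sim}\lvert\Aut_\fO(G)\rvert^{-1}\#\{G'\subseteq G:G'\cong G_1\text{ and }G/G'\cong G_2\}=\lvert\Aut_\fO(G_1)\rvert^{-1}\lvert\Aut_\fO(G_2)\rvert^{-1},
\]
together with the rewriting $\lvert\Sur_\fO(G,H)\rvert = \#\{G'\subseteq G : G/G'\cong H\}\cdot\lvert\Aut_\fO(H)\rvert$; everything else is then routine manipulation of the partition function $Z(\uu)$. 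What you have done, by contrast, is reprove the content of that proposition from scratch: your regrouping by the kernel $K$, the orbit--stabilizer accounting via the short exact sequence $1\to\Hom_\fO(H,K)\to\Aut_\fO(G,\phi)\to\Stab_{\Aut_\fO(K)}([E])\to 1$, and the identity $\lvert\operatorname{Ext}^1_\fO(H,K)\rvert=\lvert\Hom_\fO(H,K)\rvert$ (using that the maximal order $\fO$ is hereditary and that a two-term projective resolution of a finite module has $P_0$, $P_1$ of equal rank locally at every prime) collectively amount to a direct proof of the Cohen--Martinet identity specialized to this context. Both approaches are valid; the paper's is shorter because it treats the key combinatorial identity as a black box, whereas yours is more self-contained and makes the algebraic mechanism (the Hom--Ext balance for hereditary orders) explicit. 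One small stylistic point: your appeal to a Morita equivalence "replacing each simple factor by a maximal order in a division algebra" is cleanest if phrased locally at each prime of $\fO$, since it is the local freeness of projectives over $\fO_{i,\fp}\cong M_{l_{i,\fp}}(\so_{i,\fp})$ that yields $\lvert\Hom_\fO(P_0,K)\rvert=\lvert\Hom_\fO(P_1,K)\rvert$; the global statement as written is slightly imprecise but the local version is exactly what the computation needs, since $K$ is finite and $\Hom_\fO(P,K)$ decomposes over the primes.
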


\begin{proof} 
In this proof a summation over $G/\sim$ always means the sum is over all isomorphism classes of finite $\fO$-modules, with $G$ a representative from each class.
For finite $\fO$-modules $G,H$, we have
 \begin{equation*}\lvert\operatorname{Sur}_\fO(G,H)\rvert=\#\{G'\subset G|\,G/G'\cong H\}\cdot\lvert\Aut_\fO(H)\rvert.\end{equation*}
 where $G'\sub G$ denotes $G'$ a sub-$\fO$-module of $G$.
For $G_1$ and $G_2$ finite $\fO$-modules, \cite[Proposition 3.3]{CM90} gives

\begin{equation*}\sum_{G/\sim}\lvert\Aut_\fO(G)\rvert^{-1}\#\{H\subseteq G:H\cong G_1\text{ and }G/H\cong G_2\}=\lvert\Aut_\fO(G_1)\rvert^{-1}\lvert\Aut_\fO(G_2)\rvert^{-1}.\end{equation*}
Let 
\begin{equation}\label{E:defZ}
Z(\uu)=\sum_{G/\sim}  \frac{1}{|G|^{\uu}\lvert\Aut_\fO(G)\rvert}.
\end{equation}
 Then we deduce that
\begin{equation*}\begin{aligned}
&\mathbb{E}\big(\lvert\Sur_\fO(X,H)\rvert\big)=\sum_{G/\sim}\mathbb{P}(X\cong G)\lvert\Sur_\fO(G,H)\rvert\\
&=\sum_{G/\sim}\frac{1}{\lvert G\rvert^{\uu} \lvert\Aut_\fO(G)\rvert Z(\uu)}\lvert\Aut_\fO(H)\rvert\sum_{G_1/\sim}\#\{G'\subseteq G|\,G'\cong G_1,\,G/G'\cong H\}\\
&=\frac{\lvert\Aut_\fO(H)\rvert}{Z(\uu)}\sum_{G_1/\sim}\frac{1}{\lvert G_1\rvert^{\uu} \lvert H\rvert^{\uu}  } \sum_{G/\sim} \frac{1}{\lvert\Aut_\fO(G)\rvert}  \#\{G'\subseteq G|\,G'\cong G_1,\,G/G'\cong H\}\\
&=\lvert\Aut_\fO(H)\rvert\sum_{G_1/\sim}\frac{1}{\lvert\Aut_\fO(G_1)\rvert\cdot\lvert G_1\rvert^{\uu}Z(\uu)}\frac{1}{\lvert\Aut_\fO(H)\rvert\cdot\lvert H\rvert^{\uu}}\\
&=\frac{1}{\lvert H\rvert^{\uu}}\sum_{G_1/\sim}\mathbb{P}(X\cong G_1)=\frac{1}{\lvert H\rvert^{\uu}}.
\end{aligned}\end{equation*}
\end{proof}

When applying the results to class groups, it is always the case that we only consider the $e$-component of $\Q[\Gamma]$ where $e$ is some central idempotent. Suppose that $e$ is some central idempotent in $A=\Q[\Gamma]$, then $eA\subseteq\Q[\Gamma]$ is also a semisimple $\Q$-algebra and $e\fO$ is a maximal order in $eA$.  We could build a random module directly from $e\fO$, or we could multiply our original random module by $e$.  The following shows these two constructions are the same.

\begin{lemma}\label{lemma:5.3} Let $e=e_2+\cdots+e_k$ be some central idempotent of $A$, and let $X_1=X(A,\uu=(u_1,\dots,u_m),\fO)$ and $X_2=X(eA,\uv=(v_2,\dots,v_k),e\fO)$ be the random modules defined in Section~\ref{S:Notations} such that $u_i=v_i$ for all $i=2,\dots,k$. Then $eX_1$ and $X_2$ have the same probability distribution, i.e., for all finite $e\fO$-modules $G$, we have
 \[\mathbb{P}(eX_1\cong G)=\mathbb{P}(X_2\cong G).\]
\end{lemma}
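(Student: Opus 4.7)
The plan is to exploit the factorization of every finite $\fO$-module across the central idempotents of $A$. Write $1=e_1+\cdots+e_m$ as the sum of the primitive central idempotents. Since these live in $\fO$ (after extending scalars to $\Z_S$, $\fO = \prod_i e_i\fO$ as rings), every finite $\fO$-module $G$ decomposes canonically as $G=\bigoplus_{i=1}^m e_iG$ with each $e_iG$ a finite $e_i\fO$-module, and the category of finite $\fO$-modules is the product of the categories of finite $e_i\fO$-modules. From this one reads off the two multiplicative identities
\[|G|^{\uu}=\prod_{i=1}^m|e_iG|^{u_i}\quad\text{and}\quad|\Aut_\fO(G)|=\prod_{i=1}^m|\Aut_{e_i\fO}(e_iG)|.\]

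With these in hand I would factor the normalization constants. Writing $\uw$ for the subtuple of $\uu$ indexed by the complement of $\{2,\ldots,k\}$, the defining series for $X_1$ (as in \eqref{E:defZ}) factors as
\[Z_{X_1}(\uu)=\Bigl(\sum_{H/\sim}\tfrac{1}{|H|^{\uv}|\Aut_{e\fO}(H)|}\Bigr)\cdot\Bigl(\sum_{G''/\sim}\tfrac{1}{|G''|^{\uw}|\Aut_{(1-e)\fO}(G'')|}\Bigr)=Z_{X_2}(\uv)\cdot Z_{\mathrm{comp}}(\uw),\]
where in the first factor $H$ runs over finite $e\fO$-modules and in the second $G''$ runs over finite $(1-e)\fO$-modules. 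Convergence of each factor is automatic: the hypothesis on $(A,\uu,\fO)$ already guarantees $Z_{X_1}(\uu)<\infty$, and each factor of a convergent product of positive series is convergent, so in particular this legitimizes the same hypothesis for $(eA,\uv,e\fO)$.

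Now fix a finite $e\fO$-module $H$ and compute $\P(eX_1\cong H)$ by summing over all $G$ with $eG\cong H$. Each such $G$ is uniquely determined up to isomorphism by a choice of the $(1-e)$-part $G''$, so
\[\P(eX_1\cong H)=\sum_{G''/\sim}\frac{1}{Z_{X_1}(\uu)\,|H|^{\uv}|G''|^{\uw}|\Aut_{e\fO}(H)||\Aut_{(1-e)\fO}(G'')|}=\frac{Z_{\mathrm{comp}}(\uw)}{Z_{X_1}(\uu)\,|H|^{\uv}|\Aut_{e\fO}(H)|}.\]
Substituting $Z_{X_1}(\uu)=Z_{X_2}(\uv)\cdot Z_{\mathrm{comp}}(\uw)$ cancels the complementary factor and leaves precisely $\P(X_2\cong H)$, which finishes the proof.

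I do not expect any essential obstacle here; the argument is purely formal once one is comfortable with the product decomposition across primitive central idempotents. The only point requiring a line of care is that $e$ need not be primitive (it is a sum $e_2+\cdots+e_k$), so one must argue that multiplicativity of $|G|^{\uu}$ and $|\Aut_\fO(G)|$ survives grouping the factors; this is immediate from reassociating the product over $i$ into an $e$-part and a $(1-e)$-part.
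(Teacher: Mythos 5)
Your proof is correct and takes essentially the same route as the paper: both arguments rest on the factorization of $\fO=\bigoplus_i e_i\fO$, the induced multiplicativity of $|G|^{\uu}$ and $|\Aut_\fO(G)|$, and summing out the $(1-e)$-component. The paper works with ratios $\mathbb{P}(eX_1\cong G_1)/\mathbb{P}(eX_1\cong G_2)$ so the normalizing constants never appear explicitly, while you factor $Z_{X_1}(\uu)=Z_{X_2}(\uv)\cdot Z_{\mathrm{comp}}(\uw)$ and cancel; this is the same idea made more explicit (and your parenthetical ``after extending scalars to $\Z_S$'' is unnecessary since $\fO$ is already a $\Z_S$-order, but harmless).
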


\begin{proof}
Let $\mathcal{S}$ be the set of isomorphism classes of finite $(1-e)\fO$-modules. For all finite $e\fO$-modules $G_1,G_2$, we have
 \[\begin{aligned}
 \frac{\mathbb{P}(eX_1\cong G_1)}{\mathbb{P}(eX_1\cong G_2)}&=\frac{\sum_{H\in\mathcal{S}}\mathbb{P}(X_1\cong G_1\oplus H)}{\sum_{H\in\mathcal{S}}\mathbb{P}(X_1\cong G_2\oplus H)}
 \end{aligned}\] 
Since all the terms defining the probabilities factor over $G_i$ and $H$, we conclude the lemma.
\end{proof}

Therefore Theorem~\ref{thm: formula for moments} can be applied to $eX$ directly.
\begin{corollary}\label{cor:Galoismoments} Let $e\in\fO$ be any central idempotent. Given a finite $\fO$-module $H$, we have
\begin{equation*}
\mathbb{E}\left(|\Sur_\fO(eX,H)|\right)=\left\{\begin{aligned}
&\frac{1}{|H|^{\uu}}\ &\text{if }eH=H,\\
&0\ &\text{otherwise.}
\end{aligned}\right.
\end{equation*}
\end{corollary}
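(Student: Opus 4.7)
The plan is to reduce the corollary to Theorem~\ref{thm: formula for moments} via Lemma~\ref{lemma:5.3}, splitting on whether $eH=H$ or $eH\neq H$.

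First I would observe that any central idempotent $e\in\fO$ is automatically a sum of some subset of the irreducible central idempotents of $A$ (since $eA$, being a two-sided ideal in the semisimple algebra $A$, is a sum of simple factors). After relabelling, write $e=e_2+\cdots+e_k$, so that Lemma~\ref{lemma:5.3} applies and tells us that $eX$ has the same distribution as $X_2:=X(eA,\uv,e\fO)$ with $\uv=(u_2,\dots,u_k)$. Thus every moment of $eX$ can be computed from the random $e\fO$-module $X_2$.

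In the case $eH\neq H$, I would note that any $\fO$-linear map $\phi\colon eX\to H$ satisfies $\phi(y)=\phi(ey)=e\phi(y)\in eH$ for all $y\in eX$, so $\operatorname{Im}\phi\subseteq eH\subsetneq H$ and no such $\phi$ can be surjective. Hence $|\Sur_\fO(eX,H)|=0$ deterministically, giving expectation zero.

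In the case $eH=H$, the module $H$ is itself an $e\fO$-module, so $\fO$-linear and $e\fO$-linear maps from $eX\cong X_2$ to $H$ coincide; hence $\Sur_\fO(eX,H)=\Sur_{e\fO}(X_2,H)$ as random sets. Applying Theorem~\ref{thm: formula for moments} to $X_2$ (which is a random $e\fO$-module for the maximal order $e\fO\subset eA$) yields
\[
\mathbb{E}\bigl(|\Sur_{e\fO}(X_2,H)|\bigr)=\frac{1}{|H|^{\uv}}.
\]
Finally I would observe the identity $|H|^{\uv}=|H|^{\uu}$: the condition $eH=H$ forces $e_iH=0$ for every $i\notin\{2,\dots,k\}$, so the factors $|e_iH|^{u_i}$ missing from $|H|^{\uv}$ are each equal to $1$, completing the proof.

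There is no genuine obstacle here; the argument is essentially a bookkeeping exercise combining the two previous results. The only points requiring care are the identification of $\fO$-module homomorphisms with $e\fO$-module homomorphisms when both source and target are annihilated by $1-e$, and the corresponding reconciliation of the rank vectors $\uu$ and $\uv$ on such modules.
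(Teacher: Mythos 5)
Your proof is correct and follows essentially the same route as the paper's: split on $eH=H$ versus $eH\neq H$, observe that surjections vanish in the latter case, identify $\fO$-morphisms with $e\fO$-morphisms in the former, and then apply Lemma~\ref{lemma:5.3} together with Theorem~\ref{thm: formula for moments}. The only additions are your explicit justification that a central idempotent of $\fO$ is a sum of the $e_i$ and your explicit check that $|H|^{\uv}=|H|^{\uu}$, both of which the paper leaves implicit.
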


\begin{proof}  If $eH\neq H$, then there is no surjective homomorphism from any $e\fO$-module to $H$. If $eH=H$, then $\fO$-morphisms from $eG$ to $H$ are the same as $e\fO$-module homomorphisms from $eG$ to $H$.  So the corollary follows from Lemma \ref{lemma:5.3} and Theorem \ref{thm: formula for moments}. 
\cut{
Let $e=e_2+\cdots+e_k$ be the decomposition of $e$.
So we assume that $eH=H$. Then $G=eG\oplus(1-e)G$, and any $\fO$-morphism from $(1-e)G$ to $H$ is just $0$ by our assumption. So the image of any morphism $\varphi:G\to H$ is the same as $\varphi\big|_{eG}:eG\to H$, and the restriction induces an isomorphism of abelian groups of homomorphisms $\Hom_\fO(G,H)\to\Hom_{e\fO}(eG,H)$. In particular,
 \[\lvert\Sur_\fO(G,H)\rvert=\lvert\Sur_{e\fO}(eG,H)\rvert.\]
Then by Lemma \ref{lemma:5.3}, we can apply Theorem \ref{thm: formula for moments} to $(eA,(u_2,\dots,u_k),e\fO)$ and obtain
 \[\mathbb{E}\left(\lvert\Sur_\fO(eX,H)\rvert\right)=\sum_{G/\sim}\lvert\Sur_{e\fO}(eG,H)\rvert\mathbb{P}(eX\cong eG)=\frac{1}{\lvert H\rvert^{(u_2,\dots,u_k)}}=\frac{1}{\lvert H\rvert^{\uu}}.\]
hence the result.}
\end{proof}

Now we will show that the expected values of functions of $X$ agree with the averages
that appear in the conjectures of \cite{CM90}.

\begin{remark}
The original definition of $M_{\uu}^S(f)$, the average appearing the the conjectures in \cite{CM90}, is given by their Definition 5.1 and Conjecture 6.6. However, note that in the original paper,  the definition of $M^S_{\uu}(f)$ must be corrected to involve $e$, e.g. $M^S_{\uu}(f)$ should be defined with the implicit algebra $e\Q[\Gamma]$ instead of $\Q[\Gamma]$.
\end{remark}

\begin{proposition}\label{Prop: M(f)=E(f(X))} Let $|S|<\infty$, and let $f$ be a  non-negative function defined on the isomorphism classes of finite $\fO$-modules. For $X=X(A,\uu,\fO)$, we have
$$\mathbb{E}\big(f(X)\big)=\lim_{\underline{x}\to\ui}\frac{\sum_{|G|\leq\underline{x}}|G|^{-\uu}\sum_{\varphi\in\Hom(P,G)}\lvert\Aut_\fO(G)\rvert^{-1}f(G/\operatorname{Im}\varphi)}{\sum_{|G|\leq\underline{x}}|G|^{-\uu}\sum_{\varphi\in\Hom(P,G)}\lvert\Aut_\fO(G)\rvert^{-1}}$$
where
the sum is over finite $\fO$-modules $G$ and
 $P$ is a projective $\fO$-module of rank $\uu$ (as defined in \cite[Definition 3.1]{CM90}).
 Here $\underline{x}\in\Z^m$, and $|G|\leq  \underline{x}$ means that for every $i$, we have $|e_iG|\leq x_i$, and the limit means all $x_i\ra\infty$.
\end{proposition}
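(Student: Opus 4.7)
The plan is to manipulate both the numerator $N(\underline{x})$ and the denominator $D(\underline{x})$ of the displayed ratio into a form where the roles of $\im\varphi$ and $G/\im\varphi$ are decoupled. The key algebraic ingredients are three identities: (a) $|\Hom_\fO(P, G)| = |G|^{\uu}$, which follows from the definition of rank $\uu$ of the projective $\fO$-module $P$; (b) the regrouping
$\sum_{\varphi \in \Hom_\fO(P, G)} h(G/\im\varphi) = \sum_{H \subseteq G} |\Sur_\fO(P, H)| h(G/H)$
valid for any function $h$, obtained by partitioning $\Hom_\fO(P, G)$ according to the image $\im\varphi \subseteq G$; and (c) the identity $\sum_{G/\sim} |\Aut_\fO(G)|^{-1} \#\{H \subseteq G : H \cong H_0,\, G/H \cong Q\} = |\Aut_\fO(H_0)|^{-1} |\Aut_\fO(Q)|^{-1}$ from \cite[Proposition 3.3]{CM90}, already invoked in the proof of Theorem~\ref{thm: formula for moments}.

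Applying (b) to the inner sums with $h = f$ and $h = 1$, then swapping the outer sum over $|G| \leq \underline{x}$ with the sum over isomorphism classes $H_0 = H$ and $Q = G/H$ by (c), and using the multiplicativity $|e_i G| = |e_i H_0|\cdot|e_i Q|$ (from exactness of $0 \to e_iH_0 \to e_iG \to e_iQ \to 0$) to get $|G|^{\uu} = |H_0|^{\uu}|Q|^{\uu}$, I obtain
$$N(\underline{x}) = \sum_{\substack{(H_0, Q) \\ |e_iH_0||e_iQ| \leq x_i\,\forall i}} \frac{|\Sur_\fO(P, H_0)|\, f(Q)}{|H_0|^{\uu} |Q|^{\uu} |\Aut_\fO(H_0)| |\Aut_\fO(Q)|},$$
and the analogue for $D(\underline{x})$ with $f(Q)$ replaced by $1$. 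Writing $w(H_0) := |\Sur_\fO(P, H_0)| / (|H_0|^{\uu} |\Aut_\fO(H_0)|)$ and $F(\underline{y}; h) := \sum_{|e_iQ| \leq y_i\,\forall i} h(Q)\, |Q|^{-\uu} |\Aut_\fO(Q)|^{-1}$, these read $N(\underline{x}) = \sum_{H_0} w(H_0)\, F(\underline{x}/|H_0|;\, f)$ and $D(\underline{x}) = \sum_{H_0} w(H_0)\, F(\underline{x}/|H_0|;\, 1)$. By Theorem~\ref{thm: formula for moments} (or the definition of $X$), $F(\ui;\, f) = Z(\uu)\cdot\E(f(X))$ and $F(\ui;\, 1) = Z(\uu)$ are both finite, so $F(\underline{y};\, f)/F(\underline{y};\, 1) \to \E(f(X))$ as $\underline{y} \to \ui$.

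To conclude $\lim_{\underline{x}\to\ui} N(\underline{x})/D(\underline{x}) = \E(f(X))$, I would use a truncation argument: given $\epsilon > 0$, pick $\underline{y}_0$ so that $|F(\underline{z};\, f)/F(\underline{z};\, 1) - \E(f(X))| < \epsilon$ whenever $\underline{z} \geq \underline{y}_0$, and split the $H_0$-sum into a main part (where $|e_iH_0| \leq x_i/(y_0)_i$ for all $i$, so the inner ratio is within $\epsilon$ of $\E(f(X))$) and a tail part, bounding the tail via the monotone upper bound $F(\underline{x}/|H_0|;\, f) \leq F(\ui;\, f)$ and non-negativity of $f$. The main obstacle is precisely this convergence analysis: because $\sum_{H_0} w(H_0) = \sum_{Q \text{ quotient of } P} |Q|^{-\uu}$ can itself diverge (for example when $|S|$ is finite and some $u_i = 0$), the ratio is an $\infty/\infty$ limit, and the uniform control of the tail against the main part must be extracted indirectly from the convergence of $Z(\uu)$ and the coupling $|e_iH_0||e_iQ| \leq x_i$ that entangles the two sums.
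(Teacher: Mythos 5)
Your algebraic reduction is correct and in fact re-derives, from \cite[Proposition 3.3]{CM90}, essentially the identity that the paper instead obtains directly by citing \cite[Theorem 4.6(ii)]{CM90}, namely (in your notation) that the full, untruncated numerator and denominator factor as
\[
N(\ui)=\Bigl(\sum_{H_0}w(H_0)\Bigr)\cdot Z(\uu)\,\E\bigl(f(X)\bigr),
\qquad
D(\ui)=\Bigl(\sum_{H_0}w(H_0)\Bigr)\cdot Z(\uu).
\]
The regrouping by image, the swap of summation order using the multiplicativity $|G|^{\uu}=|H_0|^{\uu}|Q|^{\uu}$, and the observation that the constraint $|e_iG|\le x_i$ turns into $|e_iH_0|\,|e_iQ|\le x_i$ are all fine.

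The obstacle you then raise, however, is not real, and this is where your write-up stalls. You assert that $\sum_{H_0}w(H_0)=\sum_{Q\text{ quotient of }P}|Q|^{-\uu}$ can diverge ``when $|S|$ is finite and some $u_i=0$,'' making the limit an $\infty/\infty$. But $|\Sur_\fO(P,H_0)|\le|\Hom_\fO(P,H_0)|=|H_0|^{\uu}$ (the nonnegativity and summation formula are part of CM90's framework for projective modules of rank $\uu$), so $w(H_0)\le|\Aut_\fO(H_0)|^{-1}$ and
\[
\sum_{H_0}w(H_0)\ \le\ Z(\underline{0})\ <\ \infty
\]
precisely because $|S|<\infty$; this is the same convergence fact invoked in Remark~\ref{remark:class groups have trivial invariant part}. (Concretely, if $u_i=0$ then $|\Sur_\fO(P,H_0)|$ vanishes for every nontrivial $H_0$ supported at $e_i$, and the $e_i$-contribution to the sum is just $1$.) Once you see that $D(\ui)$ is finite and nonzero and $N(\ui)$ is either finite or honestly $+\infty$, the partial sums $N(\underline{x}),D(\underline{x})$ are increasing sums of nonnegative terms, so they converge monotonically to $N(\ui)$ and $D(\ui)$, and the limit of the ratio is $\E(f(X))$ with no $\epsilon$/tail analysis required. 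In short: your decomposition is a valid and slightly more elementary route than the paper's appeal to CM90 Theorem 4.6, but you should replace the convergence worry with the one-line bound $w(H_0)\le|\Aut_\fO(H_0)|^{-1}$ and the finiteness of $Z(\underline{0})$, after which the proof closes immediately.
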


\begin{proof} 

In this proof a summation over $G/\sim$ always means the sum is over all isomorphism classes of finite $\fO$-modules, with $G$ a representative from each class.
By \cite[Theorem 4.6 (ii)]{CM90} with $\psi(G)=|\Aut_{\fO}(G)|^{-1}$ and $\us=\uu$, if $g_{G_1}(G)=\#\{\varphi\in\Hom_\fO(P,G):G/\im\varphi\cong G_1\}$ and $P$ is projective of rank $\uu$, then 
 \[
 \sum_{G/\sim}\frac{g_{G_1}(G)}{\lvert\Aut_\fO(G)\rvert\lvert G\rvert^{\uu}}=\frac{Z(\underline{0})}{\lvert\Aut_\fO(G_1)\rvert\lvert G_1\rvert^{\uu}Z(\uu)},\]
 where $Z$ is defined in \eqref{E:defZ} (and see Remark~\ref{remark:class groups have trivial invariant part} for the convergence). Then we have
 \[\begin{aligned}
 &\sum_{G/\sim}|G|^{-\uu}\sum_{\varphi\in\Hom_\fO(P,G)}\lvert\Aut_\fO(G)\rvert^{-1}f(G/\im\varphi)\\
 &=\sum_{G_1/\sim}f(G_1)\sum_{G/\sim}\frac{g_{G_1}(G)}{\lvert\Aut_\fO(G)\rvert\lvert G\rvert^{\uu}}\\
 &=\sum_{G_1/\sim}f(G_1)\frac{Z(\underline{0})}{\lvert\Aut_\fO(G_1)\rvert\lvert G_1\rvert^{\uu}Z(\uu)}=Z(\underline{0})\mathbb{E}\big(f(X)\big)
 \end{aligned}\]
We can also apply this to the constant function $f(G)=1$, and deduce the proposition.
\end{proof}

\subsection{Moments Determine the Distribution}
So the random $\fO$-module $X$ has $H$-moment $|H|^{-\uu}$ for every finite $\fO$-module $H$. Now we ask:  given a random finite $\fO$-module $Y$ with $H$-moment $|H|^{-\uu}$ for all $H$, does $Y$ have the same probability distribution as $X$?  In this section, we will see the answer is yes.

Recall the notations from Section~\ref{S:ssNot}: 
$A=\prod_{i=1}^m A_i$ and $K_i$ is the center of $A_i$. Now for each pair $(i,\fp)$ where $i=1,\dots,m$ and $\fp$ is a prime of $K_i$, we can consider the completion $A_{i,\fp}\cong M_{l_{i,\fp}}(D_{i,\fp})$ of $A_i$ at $\fp$
(where $D_{i,\fp}$ is the completion of $D_i$ at $\fp$ and $l_{i,\fp}$ is some positive integer). If $\fO$ is a maximal $\Z_S$-order in $A$, then $e_i\fO$ also admits a completion $\fO_{i,\fp}= e_i\fO\otimes_{\Z_{K_i}}\Z_{K_{i,\fp}}$
(where $\Z_{K_i}$ is the ring of integers of $K_i$ and $\Z_{K_{i,\fp}}$ is the valuation ring of
$K_{i,\fp}$). In particular, $\fO_{i,\fp}$ is a maximal order in $A_{i,\fp}$. Then in this case (unlike in the global case), there always exists an isomorphism
 \[\fO_{i,\fp}\cong M_{l_{i,\fp}}(\so_{i,\fp}),\]
where $\so_{i,\fp}$ is \emph{the} maximal order in $D_{i,\fp}$, which is given by a valuation.

If $G$ is a finite $\fO$-module, and $(i,\fp)$ some prime ideal of $\fO$ (i.e. $\fp$ is a prime ideal of $K_i$), then let $G_\fp$ denote the part of $G$ annihilated by a power of $\fp$ and we know that $G_\fp$ is naturally a finite $\fO_{i,\fp}$-module. For any two finite $\fO$-modules $G_1$ and $G_2$, we have
 \[|\Aut_\fO(G_1)|=\prod_{(i,\fp)}|\Aut_{\fO_{i,\fp}}(G_{1,\fp})|\quad \text{and}\quad |\Sur_\fO(G_1,G_2)|=\prod_{(i,\fp)}|\Sur_{\fO_{i,\fp}}(G_{1,\fp},G_{2,\fp})|.\]
Moreover, the category of $\fO_{i,\fp}$-modules is equivalent to the category of $\so_{i,\fp}$-modules, because they are both matrix algebras over $\so_{i,\fp}$. So the question of counting surjective morphisms is then reduced to the following case: let $D$ be a division algebra over $\Q_p$ with the maximal $\Z_p$-order $\so$ and we consider the category of finite $\so$-modules. Given any (finite) partition $\lambda:\lambda_1\geq\lambda_2\geq\dots$, there exists a unique (up to isomorphism) finite $\so$-module $G$ such that
 \[G\cong\bigoplus_i\so/\fp^{\lambda_i},\]
where $\fp$ is the unique maximal ideal of $\so$, see, e.g. \cite[Lemma 2.7]{CM90}. Then we write $G=G_\lambda$ and call it the $\so$-module of type $\lambda$. Also let $q=|\so/\fp|$ be the cardinality of the simple $\so$-module.

\begin{definition}
Given a partition $\lambda:\lambda_1\geq\lambda_2\geq\cdots\geq\lambda_n$, it can be represented by a Young diagram whose number of boxes in the $i$th row represents the number $\lambda_i$. Then the transpose $\lambda'$ of $\lambda$ is the partition such that $\lambda_j'$ equals to the number of boxes in the $j$th column in the diagram of $\lambda$.
\end{definition}

\begin{lemma}\label{lemma: number of hom}
Let $D$ be a division algebra over $\Q_p$ with maximal $\Z_p$-order $\so$. Given two $\so$-modules $G_\lambda,G_\mu$ of type $\lambda$ and $\mu$. Then 
 \[|\Hom_{\scriptscriptstyle\mathcal{O}}(G_\lambda,G_\mu)|=q^{\sum_{i=1}^\infty\lambda_i'\mu_i'}.\]
\end{lemma}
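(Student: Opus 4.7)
The plan is to reduce to the cyclic case and then unpack the combinatorics of transposed partitions. Since $\so$ is a (noncommutative) discrete valuation ring (the maximal order in a local division algebra over $\Q_p$) with unique maximal two-sided ideal $\fp$ and $\so/\fp^a$ is a cyclic $\so$-module generated by $1$, I would first compute
\[
|\Hom_{\so}(\so/\fp^a,\so/\fp^b)| = q^{\min(a,b)},
\]
by observing that an $\so$-linear map is determined by the image of $1$, which must be annihilated by $\fp^a$; the elements of $\so/\fp^b$ annihilated by $\fp^a$ form $\fp^{\max(b-a,0)}/\fp^b$, which has cardinality $q^{\min(a,b)}$.

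Next, I would use the decompositions $G_\lambda=\bigoplus_i \so/\fp^{\lambda_i}$ and $G_\mu=\bigoplus_j \so/\fp^{\mu_j}$ together with bi-additivity of $\Hom$, so that
\[
|\Hom_{\so}(G_\lambda,G_\mu)| \;=\; \prod_{i,j} |\Hom_{\so}(\so/\fp^{\lambda_i},\so/\fp^{\mu_j})| \;=\; q^{\sum_{i,j}\min(\lambda_i,\mu_j)}.
\]
This reduces the lemma to the purely combinatorial identity
\[
\sum_{i,j}\min(\lambda_i,\mu_j) \;=\; \sum_{k\ge 1}\lambda_k'\mu_k'.
\]

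For this identity, I would rewrite $\min(a,b)=\#\{k\ge 1 : k\le a \text{ and } k\le b\}$, which gives
\[
\sum_{i,j}\min(\lambda_i,\mu_j) = \sum_{k\ge 1}\#\{i:\lambda_i\ge k\}\cdot \#\{j:\mu_j\ge k\}.
\]
By definition of the conjugate partition, $\#\{i:\lambda_i\ge k\}=\lambda_k'$ and similarly for $\mu$, which yields the desired sum $\sum_k \lambda_k'\mu_k'$ and completes the proof.

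I do not expect any serious obstacle. The only mildly delicate point is being careful that $\so$ is genuinely noncommutative in general, so one must check that cyclic left $\so$-modules of the form $\so/\fp^a$ (with $\fp$ a two-sided maximal ideal) really do satisfy the naive hom count; this works because $\fp$ is two-sided, so $\fp^a$ is a two-sided ideal, $\so/\fp^a$ is a quotient ring, and the image of the generator under any $\so$-linear map is constrained exactly as in the commutative case.
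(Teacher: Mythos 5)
Your proof is correct and takes essentially the same route as the paper's, just more explicitly: the paper cites \cite[Lemma 2.7]{CM90} to reduce to cyclic modules and then states the cyclic count, whereas you spell out the bi-additivity of $\Hom$ over the direct-sum decomposition $G_\lambda = \bigoplus_i \so/\fp^{\lambda_i}$ and verify the combinatorial identity $\sum_{i,j}\min(\lambda_i,\mu_j) = \sum_{k}\lambda_k'\mu_k'$, both of which the paper leaves implicit. Your care about the two-sidedness of $\fp$ in the cyclic computation is appropriate and matches the structure of maximal orders in local division algebras.

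One incidental remark: the paper's displayed cyclic-case formula writes $q^{\lambda'_1\mu'_1}$, but for $\lambda=(m)$, $\mu=(n)$ the correct exponent is $\sum_i\lambda'_i\mu'_i=\min(m,n)$ (since $\lambda'=(1^m)$, $\mu'=(1^n)$), which is exactly what you computed; the $q^{\lambda'_1\mu'_1}$ there appears to be a typo rather than a different argument.
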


\begin{proof}
By Lemma 2.7 (and more generally \S2) in [CM90], we only need to check the formula for the case when $G_\lambda,G_\mu$ are both cyclic, which is clear, i.e.,
 \[|\Hom_{\scriptscriptstyle\mathcal{O}}(\so/\fp^m,\so/\fp^n)|=q^{\min(m,n)}=q^{\lambda'_1\mu'_1}.\]
\end{proof}

\begin{lemma}\label{L:oftype} Let $G=G_\lambda$ be a $\so$-module of type $\lambda$. If $\mu\leq\lambda$, then the number of submodules of type $\mu$, denoted by $\alpha_\lambda(\mu;q)$, satisfies
 \[\alpha_\lambda(\mu;q)\leq\prod_{j\geq1}\frac{1}{(1-2^{-j})^{\lambda_1}}\cdot q^{\sum_{i=1}^{\lambda_1}\mu_i'\lambda_i'-(\mu_i')^2}.\]
\end{lemma}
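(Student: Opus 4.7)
\emph{Proof proposal.} My plan is to express $\alpha_\lambda(\mu;q)$ as a ratio of injective homomorphisms to automorphisms, bound the numerator above by the total number of homomorphisms (using Lemma~\ref{lemma: number of hom}), and obtain a clean lower bound on the denominator from an explicit formula for $|\Aut_\so(G_\mu)|$.

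Concretely, each type-$\mu$ submodule of $G_\lambda$ is the image of exactly $|\Aut_\so(G_\mu)|$ injective $\so$-homomorphisms $G_\mu \hookrightarrow G_\lambda$, giving
\[
\alpha_\lambda(\mu;q) = \frac{|\operatorname{Inj}_\so(G_\mu,G_\lambda)|}{|\Aut_\so(G_\mu)|} \leq \frac{|\Hom_\so(G_\mu,G_\lambda)|}{|\Aut_\so(G_\mu)|} = \frac{q^{\sum_i \mu'_i \lambda'_i}}{|\Aut_\so(G_\mu)|},
\]
using Lemma~\ref{lemma: number of hom} in the last equality. For the denominator I would establish the standard formula
\[
|\Aut_\so(G_\mu)| = q^{\sum_i (\mu'_i)^2}\prod_{i\geq 1}\prod_{j=1}^{m_i}(1-q^{-j}), \qquad m_i := \mu'_i - \mu'_{i+1},
\]
by the usual reduction-mod-$\fp$ argument: by Nakayama, an $\so$-endomorphism of $G_\mu$ is an automorphism iff its reduction on $G_\mu/\fp G_\mu$ is invertible, and the image of $\End_\so(G_\mu)$ inside $\End_{\so/\fp}(G_\mu/\fp G_\mu)$ consists of matrices that are block-lower-triangular with respect to the part-size filtration, with $m_i\times m_i$ diagonal blocks ranging over $M_{m_i}(\so/\fp)$; invertibility of each diagonal block is counted by $|\GL_{m_i}(\so/\fp)|=q^{m_i^2}\prod_{j=1}^{m_i}(1-q^{-j})$, and one then multiplies by the sizes of the lift fibers. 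Since $\so/\fp$ is a finite field even when $D$ is noncommutative, this calculation is identical to the one for abelian $p$-groups.

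Cancelling $q^{\sum_i (\mu'_i)^2}$ yields
\[
\alpha_\lambda(\mu;q) \leq \frac{q^{\sum_i \mu'_i(\lambda'_i-\mu'_i)}}{\prod_i\prod_{j=1}^{m_i}(1-q^{-j})},
\]
and to convert this to the claimed form one uses $q\geq 2$ (so $1-q^{-j}\geq 1-2^{-j}$) and extends each finite product to an infinite one (which only decreases it, as every factor is in $(0,1)$). Writing $c:=\prod_{j\geq 1}(1-2^{-j})<1$, this gives $\prod_{j=1}^{m_i}(1-q^{-j})\geq c$ for every $i$ with $m_i>0$; the number of such $i$ equals the number of distinct part sizes of $\mu$, which is at most $\mu_1\leq \lambda_1$, and since $c<1$ we may replace it by the larger value $\lambda_1$ to conclude the denominator is at least $c^{\lambda_1}$. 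Finally $\lambda'_i=0$ for $i>\lambda_1$ truncates the exponent sum to $1\leq i\leq \lambda_1$, producing the stated bound. The main (small) obstacle is confirming the formula for $|\Aut_\so(G_\mu)|$ in the noncommutative setting, i.e., pinning down the block-lower-triangular structure of the mod-$\fp$ reduction and the sizes of the lift fibers; once this linear-algebra calculation is in hand, everything else is a routine inequality.
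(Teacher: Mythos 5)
Your proposal is correct and follows essentially the same route as the paper: bound $\alpha_\lambda(\mu;q)$ by $|\Hom_\so(G_\mu,G_\lambda)|/|\Aut_\so(G_\mu)|$, apply the $\Hom$-counting lemma, and use the explicit formula for $|\Aut_\so(G_\mu)|$ together with $q\geq 2$, $\mu_1\leq\lambda_1$, and extension to infinite products. The only difference is that the paper cites \cite[Theorem 2.11]{CM90} for the $|\Aut_\so(G_\mu)|$ formula (which is identical to yours after identifying your $m_i=\mu'_i-\mu'_{i+1}$ with the multiplicities $k_j$ of the distinct part sizes) instead of redoing the mod-$\fp$ linear-algebra argument you sketch.
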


\begin{proof}
First we claim
 \[\alpha_\lambda(\mu;q)\leq\frac{|\Hom_{\scriptscriptstyle\mathcal{O}}(G_\mu,G_\lambda)|}{|\Aut_{\scriptscriptstyle\mathcal{O}}(G_\mu)|},\]
i.e., if $f:G_\mu\to G_\lambda$ happens to be an injective map, then $f\circ g$ where $g\in\Aut_{\scriptscriptstyle\mathcal{O}}(G_\mu)$ clearly gives us the same subgroup in $G_\lambda$. Then by Theorem 2.11 in [CM90], if $\pi_1,\dots,\pi_t$ are the distinct (nonzero) values of $\{\mu_i\}$ with multiplicities $k_1,\dots,k_t$, then
 \[\lvert\Aut_{\scriptscriptstyle\mathcal{O}}(G_\mu)\rvert=q^{\sum_i(\mu'_i)^2}\prod_{i=1}^t(k_i)_q\geq q^{\sum_i(\mu'_i)^2}\prod_{i=1}^t(\infty)_q\geq q^{\sum(\mu_i')^2}\prod_{j=1}^\infty(1-q^{-j})^{\mu_1},\]
where the notion $(k)_q$ means $\prod_{i=1}^k(1-q^{-i})$ if $k>0$. Since $\mu_1\leq\lambda_1$, we have
 \[\alpha_\lambda(\mu;q)\leq\frac{\lvert\Hom_{\scriptscriptstyle\mathcal{O}}(G_\mu,G_\lambda)\rvert}{\lvert\Aut_{\scriptscriptstyle\mathcal{O}}(G_\mu)\rvert}\leq\prod_{j=1}^\infty\frac{1}{(1-q^{-j})^{\mu_1}}q^{\sum\mu_i'\lambda_i'-(\mu_i')^2}\leq\prod_j\frac{1}{(1-2^{-j})^{\lambda_1}}\cdot q^{\sum\mu_i'\lambda_i'-(\mu_i')^2}.\]
\end{proof}

\begin{lemma}\label{lemma: estimation of Hom-moments}
For any given $\so$-module $G$ of type $\lambda$, there exists a constant $C$ such that 
 \[\#\{H\subseteq G\}\leq C^{\lambda_1}q^{\frac{1}{4}\sum(\lambda_i')^2}.\]
\end{lemma}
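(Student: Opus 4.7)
The strategy is to sum the bound of Lemma~\ref{L:oftype} over all subpartitions $\mu\subseteq\lambda$ and control the resulting sum column-by-column via a Gaussian-type estimate. Since every $\so$-submodule of $G=G_\lambda$ has a unique type $\mu\subseteq\lambda$, Lemma~\ref{L:oftype} gives
\[
\#\{H\subseteq G\}=\sum_{\mu\subseteq\lambda}\alpha_\lambda(\mu;q)\leq C_0^{\lambda_1}\sum_{\mu\subseteq\lambda}q^{\sum_i \mu_i'(\lambda_i'-\mu_i')},
\]
where $C_0:=\prod_{j\geq 1}(1-2^{-j})^{-1}$ is finite.

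Next, since $\mu\subseteq\lambda$ forces only $0\leq\mu_i'\leq\lambda_i'$ for $i=1,\dots,\lambda_1$ (and $\mu_i'=0$ for $i>\lambda_1$), I would enlarge the index set by dropping the condition that the $\mu_i'$ form a partition, obtaining
\[
\sum_{\mu\subseteq\lambda}q^{\sum_i\mu_i'(\lambda_i'-\mu_i')}\leq\prod_{i=1}^{\lambda_1}\sum_{k=0}^{\lambda_i'}q^{k(\lambda_i'-k)}.
\]
Each factor is then estimated by completing the square $k(n-k)=n^2/4-(k-n/2)^2$. As $k$ runs over $\{0,1,\dots,n\}$, the quantity $(k-n/2)^2$ takes each nonnegative integer square (for $n$ even) or each half-integer square $(j+\tfrac12)^2$ (for $n$ odd) at most twice, so
\[
\sum_{k=0}^{n}q^{k(n-k)}\leq q^{n^2/4}\Bigl(1+2\sum_{j\geq 1}q^{-j^2}+2\sum_{j\geq 0}q^{-(j+1/2)^2}\Bigr)=:B\,q^{n^2/4},
\]
with $B<\infty$ a constant depending only on $q$ (crucially, independent of $n$).

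Plugging this estimate into each of the $\lambda_1$ factors of the product yields
\[
\#\{H\subseteq G\}\leq(C_0B)^{\lambda_1}\,q^{\frac14\sum_i(\lambda_i')^2},
\]
so the choice $C:=C_0B$ finishes the argument. There is no serious obstacle here: the proof is a one-variable Gaussian estimate applied columnwise to $\lambda$, and the only points needing care are that after relaxing the partition condition the sum genuinely factors across columns, and that the Gaussian tail sum $\sum_{k=0}^n q^{-(k-n/2)^2}$ is bounded uniformly in $n$.
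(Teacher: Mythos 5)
Your proof is correct and takes the same approach as the paper: sum the bound of Lemma~\ref{L:oftype} over all $\mu\subseteq\lambda$, relax to a product over columns, and apply a Gaussian estimate columnwise. The paper simply outsources the last step by citing \cite[Lemma 7.5]{MW17} for the bound on the resulting partition sum, whereas you have spelled out that estimate directly; the content is the same.
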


\begin{proof}
To prove this lemma, we sum the result in Lemma~\ref{L:oftype} over all $\mu$, and a bound for this sum is given in \cite[Lemma 7.5]{MW17}.
\end{proof}

Now using the lemmas above and results from \cite{MW17}, we can prove that the Cohen-Lenstra-Martinet distributions are determined by their moments, and in fact even a sequence of random variables with moments converging to their moments must converge to the Cohen-Lenstra-Martinet distribution.

\begin{theorem}\label{thm: limit moments}
Take $A,\fO,m$ as in Section~\ref{S:ssNot} and let $\uu\in\Q^m$ be an $m$-tuple. 
Assume that either that $|S|<\infty$ and $\uu\geq\underline{0}$, or, that $|S|=\infty$ and $u_i>0$ for all $i$.
Let \(K_i\) be the center of each component \(A_i\) and \(R_i\) the integral closure of \(\Z_S\) in \(K_i\).
Then \(R:=\bigoplus R_i\) is the center of \(\fO\) and each \(\fO_i\) is a maximal \(R_i\)-order in \(A_i\) (see \cite[Theorem 10.5]{R03maximal}).

Let $\{X_n\}$ be a sequence of random variables taking values in finite $\fO$-modules. 
For each prime $\fp$ of $\fO$, let $n_\fp\geq 0$ such that $n_\fp=0$ for almost all $\fp$.
Let $\mathcal{S}$ be the set of all finite $\fO$-modules $H$ such that the annihilator of $H_{\fp}$ divides $\fp^{n_\fp}$.  Moreover let $N$ be the $\fO$-module such that $N_{\fp}$ is of type $(n_\fp,0,0,\dots)$.

Suppose that for every $G\in\mathcal{S}$, we have
 \[\lim_{n\to\infty}\mathbb{E}\big(\lvert\Sur_\fO(X_n,G)\rvert\big)=\frac{1}{|G|^{\uu}}.\]
Then for every $H\in\mathcal{S}$, the limit 
 \[\lim_{n\to\infty}\mathbb{P}(X\otimes_R N\cong H)\]
exists and for all $G\in\mathcal{S}$ we have
 \[\sum_{H\in\mathcal{S}}\lim_{n\to\infty}\mathbb{P}(X\otimes_R N\cong H)\lvert\Sur_\fO(H,G)\rvert=\frac{1}{|G|^{\uu}}.\]

Suppose $\{Y_n\}$ is another sequence of random variables taking values in finite $\fO$-modules such that for every $G\in\mathcal{S}$, we have
 \[\lim_{n\to\infty}\mathbb{E}\big(\lvert\Sur(Y_n,G)\rvert\big)=\frac{1}{|G|^{\uu}}.\]
Then for every $H\in A$, we have
 \[\lim_{n\to\infty}\mathbb{P}(X\otimes_R N\cong H)=\lim_{n\to\infty}\mathbb{P}(Y\otimes_R N\cong H).\]
\end{theorem}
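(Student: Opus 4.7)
The plan is to reduce to the local case via Morita equivalence and then express the limit probabilities as explicit (possibly infinite) linear combinations of the limit moments via Möbius inversion on the submodule lattice, controlling tails with Lemma~\ref{lemma: estimation of Hom-moments}.

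First, we reduce to the case of $\so$-modules. Any finite $\fO$-module decomposes canonically as $\bigoplus_{(i,\fp)} G_{(i,\fp)}$, and the operations $\otimes_R N$, $|\Sur_\fO(\cdot,\cdot)|$, and isomorphism type all factor across this decomposition. Since $N$ is supported on only finitely many primes (those with $n_\fp > 0$), it suffices to prove the theorem prime-by-prime. By Morita equivalence $\fO_{(i,\fp)} \cong M_{l_{i,\fp}}(\so_{i,\fp})$, we may assume $\fO = \so$ is the maximal order of a local division algebra and $N = \so/\fp^n$. Let $\mathcal{S}$ denote the set of isomorphism classes of finite $\so$-modules annihilated by $\fp^n$. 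Every surjection $X_n \twoheadrightarrow G$ with $G \in \mathcal{S}$ factors uniquely through $X_n \otimes N \in \mathcal{S}$, so the hypothesis yields $\mathbb{E}(|\Sur_{\so}(X_n \otimes N, G)|) \to |G|^{-\uu}$ for each $G \in \mathcal{S}$.

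Order $\mathcal{S}$ by cardinality (breaking ties arbitrarily); the matrix $T$ on $\mathcal{S}$ with $T_{H',H} = |\Sur(H',H)|$ is lower triangular in this order with nonzero diagonal $|\Aut(H)|$, hence invertible. For any $M \in \mathcal{S}$ we have the inversion identity
\begin{equation*}
\mathbb{1}_{M \cong H} \;=\; \sum_{H' \in \mathcal{S}} (T^{-1})_{H',H}\, |\Sur(M, H')|,
\end{equation*}
a finite sum for each fixed $M$ since $|\Sur(M,H')| = 0$ when $|H'| > |M|$. Taking expectations for $M = X_n \otimes N$ and exchanging the expectation with the sum (justified by dominated convergence, see below), we obtain
\begin{equation*}
\mathbb{P}(X_n \otimes N \cong H) \;=\; \sum_{H' \in \mathcal{S}} (T^{-1})_{H',H}\, \mathbb{E}(|\Sur(X_n \otimes N, H')|).
\end{equation*}
Once this series converges uniformly in $n$ in the absolute sense, term-by-term passage to the limit gives $\mu_H := \lim_{n\to\infty} \mathbb{P}(X_n \otimes N \cong H) = \sum_{H'} (T^{-1})_{H',H} / |H'|^{\uu}$. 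The moment equation $\sum_H \mu_H \cdot |\Sur(H, G)| = |G|^{-\uu}$ then follows by rearranging the double sum using $T T^{-1} = I$ (subject to a parallel absolute-convergence check), and uniqueness is immediate since $\mu_H$ depends only on the limit moments, so a second sequence $\{Y_n\}$ with the same limit moments produces the same $\mu_H$.

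The hard part is the uniform absolute convergence invoked above, i.e.\ bounding $\sum_{H' \in \mathcal{S}} |(T^{-1})_{H',H}| \cdot \mathbb{E}(|\Sur(X_n \otimes N, H')|)$ uniformly in $n$. The essential input is Lemma~\ref{lemma: estimation of Hom-moments}: it controls the number of submodules of a given $\so$-module, from which inductive estimates on $|(T^{-1})_{H',H}|$ (via Möbius inversion on the submodule lattice) can be extracted. Combining these estimates with Markov-type bounds from the moment hypothesis at small targets $\so/\fp^k$ (which uniformly limit how large $X_n \otimes N$ can be with nontrivial probability) yields the tail control needed to finish the argument.
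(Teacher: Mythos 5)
Your high-level plan (reduce to the local case, invert the matrix $T_{H',H}=|\Sur(H',H)|$, pass to the limit term-by-term) is a reasonable first instinct, and your local reduction is fine. But the proposal has a genuine gap in the step you yourself flag as "the hard part": the claimed uniform-in-$n$ absolute convergence of $\sum_{H'\in\mathcal{S}} (T^{-1})_{H',H}\,\mathbb{E}(|\Sur(X_n\otimes N,H')|)$ does not follow from the hypotheses, and you do not actually establish it. The hypothesis gives you pointwise convergence $\mathbb{E}(|\Sur(X_n\otimes N,H')|)\to |H'|^{-\uu}$ for each fixed $H'\in\mathcal{S}$, but no uniform bound over $H'$ and $n$ simultaneously. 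Since $\mathcal{S}$ contains modules of unbounded rank (only the exponent of $\fp$ is bounded), one can have sequences $X_n$ in which the finite-$n$ moment at some $H'_n$ with $|H'_n|\to\infty$ is much larger than the limit $|H'_n|^{-\uu}\le 1$, and the "Markov-type" bound from small targets $\so/\fp^k$ only controls the tail probability that $X_n\otimes N$ has large rank, not the size of $\mathbb{E}(|\Sur(X_n\otimes N,H')|)$ on that tail. So dominated convergence for the inversion series is not available as stated, and the inversion-plus-DCT shortcut does not close.

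The paper's proof avoids precisely this difficulty by splitting the argument into three steps, following \cite[Theorem 8.3]{MW17}: (1) \emph{conditionally}, if the limits $\lim_n\mathbb{P}(X_n\otimes_R N\cong H)$ exist, show they satisfy the moment equations --- this uses Lemma~\ref{lemma: number of hom} and a dominating function $G'$ with $\sum_H |\Hom(H,G)|/|\Hom(H,G')|<\infty$, which is a much weaker convergence requirement than what you need; (2) establish \emph{uniqueness} of the limit measure given its moments by invoking \cite[Theorem 8.2]{MW17}, with the growth hypothesis on $C_\lambda$ verified via Lemma~\ref{lemma: estimation of Hom-moments}; (3) obtain \emph{existence} of the limits by a diagonal/compactness argument (extract a subsequence where the probabilities converge, apply (1)--(2) to identify the limit, and conclude the full sequence converges since every subsequence has a further subsequence with the same limit). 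Your inversion idea is closest in spirit to the uniqueness machinery inside \cite[Theorem 8.2]{MW17}, but that theorem is applied to the putative \emph{limit} distribution where the growth condition $C_\lambda\le F q^{\frac14\sum(\lambda_i')^2}$ is available, not to the finite-$n$ moments where no such uniform bound holds. To repair your write-up you would need either to prove the uniform absolute convergence (which I do not see how to do from the hypotheses alone), or to restructure along the paper's (1)--(2)--(3) lines.
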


\begin{proof} The proof is very similar to \cite[Theorem 8.3]{MW17}, so we only present a sketch and highlight the differences.
First we suppose that the limit
 \[\lim_{n\to\infty}\mathbb{P}(X_n\otimes_R N\cong H)\]
exists for all $H\in\mathcal{S}$ and we are going to show that for all $G\in\mathcal{S}$ we have
 \[\sum_{H\in\mathcal{S}}\lim_{n\to\infty}\mathbb{P}(X_n\otimes_R N\cong H)|\Sur_\fO(H,G)|=\frac{1}{|G|^{\uu}}.\]
By Lemma \ref{lemma: number of hom} and  the same argument as in \cite[Theorem 8.3]{MW17}, for each $G\in\mathcal{S}$, there exists $G'\in\mathcal{S}$ such that 
 \[\sum_{H\in\mathcal{S}}\frac{\lvert\Hom_\fO(H,G)\rvert}{\lvert\Hom_\fO(H,G')\rvert}<\infty.\]

Then the same argument as in in \cite[Theorem 8.3]{MW17} using the Lebesgue Dominated Convergence Theorem  concludes that
\[\begin{aligned}
 &\sum_{H\in\mathcal{S}}\lim_{n\to\infty}\mathbb{P}(X_n\otimes_R N\cong H)\lvert\Sur(H,G)\rvert\\
 &=\lim_{n\to\infty}\sum_{H\in\mathcal{S}}\mathbb{P}(X_n\otimes_R N\cong H)\lvert\Sur(H,G)\rvert=\frac{1}{|G|^{\uu}}
 \end{aligned}\]
i.e., if for all $H\in\mathcal{S}$ the limit $\lim_{n\to\infty}\mathbb{P}(X_n\otimes_R N\cong H)$ exists, then the moments agree with $\mathbb{E}\big(\lvert\Sur(X,G)\rvert\big)$ for all $G\in\mathcal{S}$.

Next we show that if the limits $\lim_{n\to\infty}\mathbb{P}(X_n\otimes_R N\cong H)$ and $\lim_{n\to\infty}\mathbb{P}(Y_n\otimes_R N\cong H)$ exist for all $H$ then
 \[\begin{aligned}
 &\sum_{H\in\mathcal{S}}\lim_{n\to\infty}\mathbb{P}(Y_n\otimes_R N\cong H)\lvert\Sur(H,G)\rvert
 &=\sum_{H\in\mathcal{S}}\lim_{n\to\infty}\mathbb{P}(X_n\otimes_R N\cong H)\lvert\Sur(H,G)\rvert=\frac{1}{|G|^{\uu}}
 \end{aligned}\]
implies 
 \[\lim_{n\to\infty}\mathbb{P}(Y_n\otimes_R N\cong H)=\lim_{n\to\infty}\mathbb{P}(X_n\otimes_R N\cong H).\]
Note that the averages $|\Hom_\fO(X,H)|$ and $|\Sur_\fO(X,H)|$ over all $H$,  are determined from one another by finitely many steps of addition and subtraction. We'll apply \cite[Theorem 8.2]{MW17} with distinct primes $p_i$'s in the assumption replaced by not necessarily distinct real numbers $q_i$'s. The proof of the theorem actually proves the statement in this generality.

Now let $M$ be the set defined in \cite[Theorem 8.2]{MW17} where the choice of $q_{i}$ comes from the following: there are only finitely many primes $\fp_{ij}\subseteq\Z_{K_i}$ such that $n_{\fp_{ij}}>0$ for all $i=1,\dots,m$, so we can  let $q_k=|\so_k/\fp_k'|$ where $\so_k\subseteq D_{i,\fp_k}$ is the maximal order in $D_{i,\fp_k}$ and $\fp_k'$ is the unique maximal ideal. We say that an $\fO$-module $G\in\mathcal{S}$ corresponds to $\mu\in M$ if the type of $G$ is exactly $\mu'$ where $\mu'$ is obtained by $(\mu')_k=(\mu_k)'$. We then define
 \[x_\mu=\lim_{n\to\infty}\mathbb{P}(X_n\otimes_R N\cong G_{\mu'})\]
for all $\mu\in M$. And similarly for $y_\mu$. If we let $C_\lambda$ denote the expected value of the number of homomorphisms into $G_{\lambda'}$, then by Lemma \ref{lemma: estimation of Hom-moments}, we know that $C_\lambda$ satisfies the condition in \cite[Theorem 8.2]{MW17}. Then \cite[Theorem 8.2]{MW17} tells us that $x_\mu$ and $y_\mu$ are determined by $C_\lambda$.

Finally, the same diagonal argument at the end of the proof \cite[Theorem 8.3]{MW17} shows that when the limit moments are $|G|^{-\uu}$, the limit $\lim_{n\to\infty}\mathbb{P}(X_n\otimes_R N\cong H)$ exists for all $H\in\mathcal{S}$.
\end{proof}

The above theorem is the most flexible for applications, but we will state now simpler versions to emphasize the main point.

\begin{theorem}\label{thm: moments determine distribution finite case}
Keep the notations in Theorem~\ref{thm: limit moments}.
Assume that $|S|<\infty$. If $\{X_n\}$ is a sequence of random variables taking values in finite $\fO$-modules such that
 \[\lim_{n\to\infty}\mathbb{E}\big(|\Sur_\fO(X_n,G)|\big)=\frac{1}{|G|^{\uu}}\]
for all finite $\fO$-module $G$, then 
 \[\lim_{n\to\infty}\mathbb{P}(X_n\cong G)=\frac{1}{|\Aut_\fO(G)||G|^{\uu}Z(\uu)},\]
i.e., the limit of the random variables exists and has the same probability distribution as the random variable $X=X(A,\uu,\fO)$.
\end{theorem}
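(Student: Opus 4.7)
The strategy is to deduce this finite-\(|S|\) statement from the more general Theorem~\ref{thm: limit moments}. The key observation is that when \(|S|<\infty\), each \(K_i\) has only finitely many primes above the (finitely many) rational primes of \(S\), so the ring \(\fO\) has only finitely many maximal two-sided ideals. In particular, the truncation data \(\{n_\fp\}\) appearing in Theorem~\ref{thm: limit moments} can be chosen to be simultaneously positive and arbitrarily large at \emph{every} prime of \(\fO\), which is the feature absent in the infinite-\(S\) setting.

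First I would set \(Y_n:=X(A,\uu,\fO)\) for all \(n\). By Theorem~\ref{thm: formula for moments} this constant sequence satisfies \(\mathbb{E}(|\Sur_\fO(Y_n,G)|)=|G|^{-\uu}\) for every finite \(\fO\)-module \(G\), so the uniqueness portion of Theorem~\ref{thm: limit moments} applies to \(\{X_n\}\) and \(\{Y_n\}\) simultaneously.  For any admissible choice of \(\{n_\fp\}\) this gives
\[
\lim_{n\to\infty}\mathbb{P}(X_n\otimes_R N\cong H)=\mathbb{P}(X\otimes_R N\cong H)\qquad\text{for every }H\in\mathcal{S},
\]
where \(X=X(A,\uu,\fO)\).

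Next, given a fixed finite \(\fO\)-module \(G\), I would pick \(n_\fp\) at each of the finitely many primes \(\fp\) of \(\fO\) to strictly exceed every elementary-divisor exponent appearing in the cyclic decomposition of \(G_\fp\) coming from \cite[Lemma 2.7]{CM90} (in particular \(n_\fp>0\) for every \(\fp\)).  The main algebraic step is then to verify that for any finite \(\fO\)-module \(M\), one has \(M\otimes_R N\cong G\) if and only if \(M\cong G\).  Working prime-by-prime and passing through the Morita equivalence \(\fO_{i,\fp}\cong M_{l_{i,\fp}}(\so_{i,\fp})\), this reduces to a partition-theoretic statement: if \(M_\fp\) has local type \(\{a_j\}\) and \(G_\fp\) has local type \(\{b_k\}\) and \(n_\fp>\max_k b_k\), then the multiset identity \(\{\min(a_j,n_\fp)\}=\{b_k\}\) forces \(\{a_j\}=\{b_k\}\); at primes \(\fp\) not in the support of \(G\), the condition \(M_\fp\otimes N_\fp=0\) together with \(n_\fp>0\) forces \(M_\fp=\fp^{n_\fp}M_\fp=0\) by Nakayama.

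Granted this bijection, the events \(\{X_n\otimes_R N\cong G\}\) and \(\{X_n\cong G\}\) coincide (and likewise for \(X\)), so combining with the previous step yields
\[
\lim_{n\to\infty}\mathbb{P}(X_n\cong G)=\mathbb{P}(X\cong G)=\frac{1}{|\Aut_\fO(G)||G|^{\uu}Z(\uu)},
\]
the right-hand side being built into the definition of \(X(A,\uu,\fO)\). The main obstacle I foresee is the algebraic verification that \((-)\otimes_R N\) has singleton fibres on the relevant modules; the content is elementary but requires carefully translating the statement across the Morita equivalence to the (possibly non-commutative) local maximal orders \(\so_{i,\fp}\), where the partition combinatorics above becomes transparent.
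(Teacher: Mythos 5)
Your proof follows essentially the same route as the paper's: specialize Theorem~\ref{thm: limit moments} with truncation data $\{n_\fp\}$ chosen large enough that the map $H\mapsto H\otimes_R N$ has singleton fiber over the fixed module $G$, so that $\mathbb{P}(X_n\cong G)=\mathbb{P}(X_n\otimes_R N\cong G)$ and the latter is determined by the limit moments. Your explicit introduction of the constant comparison sequence $Y_n=X(A,\uu,\fO)$ (which has the right moments by Theorem~\ref{thm: formula for moments}) to pin down the limiting distribution is implicit in the paper but spelled out correctly here, as is your check at primes outside the support of $G$.

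The one substantive point of divergence is the choice of $n_\fp$. You require $n_\fp$ to strictly exceed the largest part $\lambda^{\fp}_1$ of the local type, while the paper's printed proof takes $n_{i,\fp}=(\lambda^{i,\fp})'_1+1$, i.e., one more than the \emph{number} of nonzero parts. Your choice is the one that actually makes the singleton-fiber claim go through: for $G$ to lie in $\mathcal{S}$ one already needs every part of $\lambda^{\fp}$ to be at most $n_\fp$, and for the preimage of $G$ under $(-)\otimes_R N$ to consist of $G$ alone one needs every part to be \emph{strictly} less than $n_\fp$ (otherwise parts equal to $n_\fp$ could be inflated arbitrarily). The paper's formula appears to be a slip for $\lambda^{i,\fp}_1+1$. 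With that reading, the two proofs agree.
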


\begin{proof}
If $|S|<\infty$, we can take into account all the prime ideals of $\fO$ at one time. Provided that $G$ is a finite module such that $G_{i,\fp}$ is of type $\lambda^{i,\fp}$ where $\lambda^{i,\fp}$ is a partition, then in Theorem \ref{thm: limit moments} we take $n_{i,\fp}=(\lambda^{i,\fp})'_1+1$. If $H$ is any $\fO$-module such that
 \[H\otimes_R N\cong G,\]
then $H$ has to be isomorphic to $G$, i.e., $\mathbb{P}(X_n\cong G)=\mathbb{P}(X_n\otimes N\cong G)$, and it is determined by the limit moments.
\end{proof}

\begin{theorem} \label{T:infmom}
Assume that $|S|=\infty$ and $u_i>0$ for all $i=1,\dots,m$, and $X=X(A,\uu,\fO)$ is the random variable we've defined.
If $Y$ is a random variable taking values in finite $\fO$-modules such that 
 \[\mathbb{E}\big(|\Sur_\fO(Y,G)|\big)=\frac{1}{|G|^{\uu}}=\mathbb{E}\big(|\Sur_\fO(X,G)|\big).\]
Then
 \[\mathbb{P}(Y\cong G)=\mathbb{P}(X\cong G),\]
for all finite $\fO$-modules $G$.
\end{theorem}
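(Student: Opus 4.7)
The plan is to apply Theorem~\ref{thm: limit moments} to the constant sequences $X_n\equiv X$ and $Y_n\equiv Y$, and then to recover the full distributions from their truncated marginals by taking a limit over ever-larger cutoff modules $N$.

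First, Theorem~\ref{thm: formula for moments} gives $\mathbb{E}(|\Sur_\fO(X,H)|)=|H|^{-\uu}$ for every finite $\fO$-module $H$. Combined with the hypothesis on $Y$, the constant sequences $X_n\equiv X$ and $Y_n\equiv Y$ trivially satisfy the limit-moment hypothesis of Theorem~\ref{thm: limit moments} for every choice of cutoffs $\{n_\fp\}$ with associated data $(N,\mathcal{S})$. That theorem then yields
\[
\mathbb{P}(X\otimes_R N\cong H)=\mathbb{P}(Y\otimes_R N\cong H)
\]
for every $H\in\mathcal{S}$.

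Next, fix a finite $\fO$-module $G$, and let $T_0:=\{\fp : G_\fp\ne 0\}$; since $G$ is finite, $T_0$ is a finite set of primes of $R$. Pick an integer $c$ strictly larger than $\lambda^\fp_1$ for every $\fp\in T_0$, where $\lambda^\fp$ is the type of $G_\fp$. For each finite set of primes $T\supset T_0$ of $R$, take $n^T_\fp=c$ for $\fp\in T$ and $n^T_\fp=0$ for $\fp\notin T$, and let $(N^T,\mathcal{S}^T)$ be the associated data; then $G\in\mathcal{S}^T$. Working prime by prime via the Morita equivalence $\fO_\fp\cong M_{l_\fp}(\so_\fp)$, so that tensoring with $N^T_\fp$ amounts to reduction modulo $\fp^c$, I expect to check that $\{X\otimes_R N^T\cong G\}$ coincides with the event $\{X_\fp\cong G_\fp\text{ for all }\fp\in T\}$: for $\fp\in T_0$ the choice of $c$ forces $X_\fp\cong G_\fp$; for $\fp\in T\setminus T_0$ the condition $X_\fp/\fp^c X_\fp=0$ forces $X_\fp=0$ by Nakayama's lemma (using that $X_\fp$ is finite); and for $\fp\notin T$ no condition is imposed.

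Finally, choose any exhaustion $T_1\subset T_2\subset\cdots$ of the primes of $R$ with $T_1\supset T_0$. The events $\{X\otimes_R N^{T_i}\cong G\}$ are decreasing in $i$, and their intersection is exactly $\{X\cong G\}$, since any finite $\fO$-module distinct from $G$ must disagree with $G$ at some prime, which will lie in $T_i$ for all sufficiently large $i$. Continuity of probability for decreasing events then gives
\[
\mathbb{P}(X\cong G)=\lim_{i\to\infty}\mathbb{P}(X\otimes_R N^{T_i}\cong G),
\]
and likewise for $Y$; combined with the first step this yields the desired equality. The main technical point will be the local tensor computation identifying $\{X\otimes_R N^T\cong G\}$ with the described cylinder event; the passage to the limit is then a standard application of countable additivity.
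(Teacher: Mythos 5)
Your proof is correct and follows the same overall strategy as the paper: reduce to finitely many primes using the earlier moment-determination machinery, then pass to the limit by continuity of measure. The difference is one of packaging. The paper simply invokes Theorem~\ref{thm: moments determine distribution finite case} to obtain
\[
\mathbb{P}\bigl(Y_{\fp_i}\cong G_{\fp_i},\ i=0,\dots,n\bigr)=\mathbb{P}\bigl(X_{\fp_i}\cong G_{\fp_i},\ i=0,\dots,n\bigr)
\]
for each $n$ and then takes the limit; you instead go back to Theorem~\ref{thm: limit moments} directly (with the constant sequences $X_n\equiv X$, $Y_n\equiv Y$) and re-derive the needed cylinder-event identity by hand, including the Nakayama step for primes $\fp\in T\setminus T_0$. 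That identification of $\{X\otimes_R N^T\cong G\}$ with $\{X_\fp\cong G_\fp\ \text{for }\fp\in T\}$ is precisely the content of the paper's proof of Theorem~\ref{thm: moments determine distribution finite case}, so you have effectively unpacked the cited theorem into the argument rather than citing it as a black box. Your route has the mild advantage of avoiding the slight awkwardness that Theorem~\ref{thm: moments determine distribution finite case} is stated for $|S|<\infty$ while here $|S|=\infty$ (the paper is implicitly restricting the underlying order to the finitely many primes under consideration); applying Theorem~\ref{thm: limit moments} directly with cutoff data $\{n_\fp\}$ makes this step explicit. The final continuity-of-measure step is the same in both proofs.
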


\begin{proof}
We let $\fp_i$ be the primes of $\fO.$
By Theorem \ref{thm: moments determine distribution finite case}, for every $n$ we have
 \[\mathbb{P}(Y_{\fp_i}\cong G_{\fp_i}|\,i=0,1,\dots,n)=\mathbb{P}(X_{\fp_i}\cong G_{\fp_i}|\,i=0,1,\dots,n).\] Then by basic properties of measures, we have
\begin{equation*}\begin{aligned}
\mathbb{P}(Y\cong G)&=\mathbb{P}(Y_{\fp_i}\cong G_{\fp_i}|\,i=0,1,2,\dots)\\
&=\lim_{n\to\infty}\mathbb{P}(Y_{\fp_i}\cong G_{\fp_i}|\,i=0,1,\dots,n)\\
&=\lim_{n\to\infty}\mathbb{P}(X_{\fp_i}\cong G_{\fp_i}|\,i=0,1,\dots,n)\\
&=\mathbb{P}(X_{\fp_i}\cong G_{\fp_i}|\,i=0,1,2,\dots)=\mathbb{P}(X\cong G).
\end{aligned}\end{equation*}
\end{proof}

However the statement on limit moments determining the limit distributions does not hold if $S$ contains infinitely many primes.
\begin{example} Let $S$ contain infinitely many prime numbers which are relatively prime to $|\Gamma|$ (so that $\fO=\Z_S[\Gamma]$) and  $u_i>0$ for all $i$. Let $H$ be any finite $\fO$-module. Then   $\mathbb{P}(X\cong H)>0$. 

For every rational prime $p$, there is a $\fO$-module $G_p$ whose underlying abelian group is a $p$-group, say $(\Z_S/p\Z_S)^n\cong(\Z/p\Z)^n$ which is a representation of $\Gamma$ over the finite field $\mathbb{F}_p$. 
Let $Y_p$ be a random $\fO$-module such that
\begin{equation*}\mathbb{P}(Y_p\cong G)=\left\{\begin{aligned}
&\mathbb{P}(X\cong G)\ &\forall\,G\neq H\text{ or }H\times G_p;\\
&0\ &\text{if }G=H;\\
&\mathbb{P}(X\cong H)+\mathbb{P}(X\cong H\times G_p)\ &\text{if }G=H\times G_p.
\end{aligned}\right.\end{equation*}
Since $|\Sur_\fO(H,G)|=|\Sur_\fO(H\times G_p,G)|$ whenever $p>|G|$, for every $\fO$-module $G$, we have
$$
\lim_{p\ra\infty} \E(|\Sur_\fO(Y_p,G)| ) =\E(|\Sur_\fO(X,G)| ).
$$
However  $\lim_{p\ra\infty}\mathbb{P}(Y_p\cong H)=0$. This shows there is no analog of Theorem~\ref{thm: moments determine distribution finite case} for infinite $S$.
\end{example}

\section{Explanation of the Cohen-Martinet Heuristics in the non-Galois case}\label{S:NG}

Cohen and Martinet \cite{CM90} do not specifically make a conjecture about the distribution of class groups of non-Galois fields.  However, they do show that by expressing class groups of non-Galois fields in terms of Galois fields, such conjectures can be obtained as consequences of their conjectures in some cases.  The goal of this section is to deduce the entire consequence of the Cohen-Martinet conjectures for class groups of non-Galois fields.  Interestingly, in the non-Galois case, one can sometimes also say something about the $p$-Sylow subgroup of the class group for $p$ dividing the order of the Galois group of the Galois closure.  So first, we must state a more complete version of the conjecture of \cite{CM90} that includes these primes.

\cut{
CM define a nice theory of random $\fO$-modules, allowing many averages on them to be computed.  Similarly in spirit, our nice theory of moments above is for $\fO$ modules.  But $\Z[\Gamma]\ne\fO$.  Since away from primes dividing $\Gamma$, they are equal, we can make this guess in this case.  When $e$ is a central idempotent, and at $e$ and $S$ they are the same, could also make that guess, need $e$ to make sense first.  Remark: this is not a condition that comes from anything about class groups, but rather one has a nice theory of $\O$-modules and can only use it when desired objects are $\fO$-modules.  
}

In this section we continue the notations introduced in Section~\ref{S:ssNot} and Section~\ref{S: notations for the Heuristics}. In particular, $\Gamma$ is a fixed finite group.

\begin{definition} Let $e$ be any central idempotent of $\Q[\Gamma]$. We say that a prime number $p$ is \emph{good for $e$} if $e\in\Z_{(p)}[\Gamma]$ and $e\Z_{(p)}[\Gamma]$ is a maximal $\Z_{(p)}$-order in $e\Q[\Gamma]$, and it is \emph{bad for $e$} otherwise.
\end{definition}

This definition is stated slightly different from the original one in \cite[6.1]{CM90}, but they are equivalent (see \cite[Theorem 10.5]{R03maximal}). \cut{An order $\fO$ in a semisimple $\Q$-algebra $A$ is maximal if and only if for every irreducible central idempotent $e$ of $A$, the order $\fO$ contains $e$ and $e\fO$ is a maximal order in $eA$, c.f., \cite[Theorem 10.5]{R03maximal}. If $e\in\Z_{(p)}[\Gamma]$ and $e\Z_{(p)}[\Gamma]$ is a maximal order in $e\Q[\Gamma]$, then for any irreducible central idempotent $e'$ contained in $e$, i.e., $e'\cdot e=e'$, it must be contained in $\Z_{(p)}[\Gamma]$, and $e'\Z_{(p)}[\Gamma]=e'\cdot e\Z_{(p)}[\Gamma]$, as the $e'$-component of a maximal order, is a maximal order in $e'\Q[\Gamma]$, hence we obtain the same conditions as in \cite{CM90}.}
A prime $p$ such that $p\nmid\lvert\Gamma\rvert$ is good for any central idempotents $e$, including $e=1$. For a central idempotent $e$ in $\Q[\Gamma]$, and $S$ a set of primes good for $e$, \cite[Hypothesis 6.6]{CM90} conjecture a distribution for $e\Cl^S_K$ (see also Proposition \ref{Prop: M(f)=E(f(X))} and Lemma \ref{lemma: e-relative class group=e-class group}).

\begin{conjecture}[Cohen and Martinet \cite{CM90}]\label{C:CMfull}
Let $e$ be a fixed central idempotent in $(1-e_1)\Q[\Gamma]$, such that $e=e_2+\cdots+e_k$, where the $e_i$ are irreducible central idempotents. Let  $S$ be a set of prime numbers such that if $p\in S$ then $p$ is a good prime for $e$, and $\uu\in \Q^{k-1}$.
Let $X=X(e(1-e_1)\Q[\Gamma] ,\uu,e\Z_S[\Gamma])$.
Then, for every ``reasonable'' non-negative function $f$ defined on the set of isomorphism classes of finite $e\Z_S[\Gamma]$-modules, we have:
\[
\lim_{x\to\infty}\frac{\sum_{|\Disc L|\leq x}f(e\Cl^S_{L})}{\sum_{|\Disc L|\leq x}1}=\mathbb{E}\big(f(X)\big)
\]
where $L$ runs through all \(\Gamma\)-extensions of \(K_0\) such that $\lvert \Disc L\rvert\leq x$ and the rank of $L|K_0$ restricted to the coordinates $2,\dots,k$ is $\uu$.
\end{conjecture}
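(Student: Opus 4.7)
The statement is Hypothesis~6.6 of \cite{CM90} reformulated in the notation built up in the preceding sections, so what is required is a derivation showing that this clean formulation is equivalent to the original, rather than a fresh attack on the conjecture itself. The plan has two steps, each invoking a result already stated in the paper.

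The first step replaces $e\Cl_L^S$ by the corresponding component $e\Cl_{L|K_0}^S$ of the relative class group, which is the object that actually appears in \cite{CM90}. Since $e\cdot e_1=0$ by the hypothesis $e\in(1-e_1)\Q[\Gamma]$, and the norm $\Nm_{L|K_0}$ acts on $\Cl_L^S$ as $|\Gamma|\,e_1$, every element of $e\Cl_L^S$ automatically has trivial norm down to $K_0$ and hence lies in $\Cl_{L|K_0}^S$; the reverse inclusion is built into the definitions. This identification is exactly Lemma~\ref{lemma: e-relative class group=e-class group}, cited in the remark after Conjecture~\ref{C:Galois}.

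The second step matches the averages on the right-hand side. In \cite{CM90} the predicted quantity is $M_{\uu}^S(f)$, an explicit limit over pairs $(G,\varphi)$ with $\varphi\in\Hom(P,G)$ for a projective $\fO$-module $P$ of rank $\uu$. Proposition~\ref{Prop: M(f)=E(f(X))}, proved earlier in the excerpt, rewrites $M_{\uu}^S(f)$ as $\mathbb{E}(f(X))$ for $X=X(e(1-e_1)\Q[\Gamma],\uu,e\Z_S[\Gamma])$; the main inputs there are the general counting identity \cite[Theorem 4.6]{CM90} and convergence of the weighted zeta series $Z(\uu)$ on the relevant region, guaranteed because $|S|<\infty$ (or, alternatively, $u_i>0$) under the standing hypotheses.

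The main subtlety, and the step requiring genuine care, is bookkeeping in the second step: one must verify that the ordering of the irreducible central idempotents $e_2,\dots,e_k$ summing to $e$, the rank vector $\uu$ restricted to the corresponding coordinates, the semisimple algebra $e(1-e_1)\Q[\Gamma]$, and the maximal order $e\Z_S[\Gamma]$ (maximality of which is exactly the ``good for $e$'' hypothesis) correspond precisely to the algebraic data that implicitly underlies the original $M_{\uu}^S(f)$, including the correction to their definition flagged in the remark preceding Proposition~\ref{Prop: M(f)=E(f(X))}. Once these dictionary checks are carried out, the conjecture as stated is literally equivalent to Hypothesis~6.6 of \cite{CM90}.
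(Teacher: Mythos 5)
Your proposal is correct and mirrors the paper's own treatment: since the statement is a conjecture, no proof is (or can be) given, and the paper's job is precisely the two-step dictionary check you describe—Lemma~\ref{lemma: e-relative class group=e-class group} to replace the relative class group $\Cl^S_{L|K_0}$ by $e\Cl^S_L$, and Proposition~\ref{Prop: M(f)=E(f(X))} to identify $M^S_{\uu}(f)$ with $\E(f(X))$. Your attention to the definitional correction (that $M^S_{\uu}(f)$ should implicitly use the algebra $e\Q[\Gamma]$) matches the remark the paper makes just before Proposition~\ref{Prop: M(f)=E(f(X))}.
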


 For a field extension $L|K$ of number fields with groups of fractional ideals $I_L$ and $I_K$, the embedding $i:I_K\to I_L$ defined on fractional ideals induce, by passing to the classes, the homomorphism:
 \begin{equation*}i_*:\Cl_K\to\Cl_L.\end{equation*}
For this homomorphism, we have the following.
\begin{theorem}[{\cite[Theorem 7.6]{CM90}}] Let $L|K$ be a \(\Gamma'\)-extension of number fields. The kernel (resp. the cokernel) of
 \begin{equation*}i_*:\Cl_K\to\Cl_L^{\Gamma'}\ \text{is annihilated by }\lvert\Gamma'\rvert\text{ (resp. $\lvert\Gamma'\rvert^2$)}.\end{equation*}
\end{theorem}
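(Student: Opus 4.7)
The plan is to exploit the norm map $\Nm_{L/K}\colon \Cl_L \to \Cl_K$ and to study its two compositions with $i_*$. This is a classical strategy that in fact yields the stronger bound of $|\Gamma'|$ (rather than $|\Gamma'|^2$) for the cokernel.

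For the kernel, I would first verify $\Nm_{L/K}\circ i_* = [L:K]\cdot\operatorname{id}$ on $\Cl_K$. This reduces to the ideal-level statement $\Nm_{L/K}(\fa\O_L) = \fa^{[L:K]}$ for $\fa\in I_K$, which follows from the behavior of $\Nm_{L/K}$ on prime ideals of $\O_K$ together with the fundamental relation $\sum_{\fP\mid\fp} e(\fP/\fp)f(\fP/\fp) = [L:K]$. Since $[L:K]=|\Gamma'|$ in the Galois setting, any $\alpha\in\ker i_*$ satisfies $|\Gamma'|\alpha = \Nm_{L/K}(i_*(\alpha)) = 0$, so $\ker i_*$ is annihilated by $|\Gamma'|$.

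For the cokernel, I would consider the reverse composition $i_*\circ \Nm_{L/K}\colon \Cl_L \to \Cl_L$. In the Galois case one has the ideal-level identity $\Nm_{L/K}(I)\O_L = \prod_{\sigma\in\Gamma'}\sigma(I)$ for every fractional ideal $I$ of $L$, which is checked on a prime $\fP$ using that $\Gamma'$ acts transitively on the primes above $\fp=\fP\cap\O_K$ and then extended multiplicatively to fractional ideals. Passing to classes, $i_*\circ \Nm_{L/K}$ agrees with the norm element $\sum_{\sigma\in\Gamma'}\sigma\in\Z[\Gamma']$ acting on $\Cl_L$. Restricted to $\Cl_L^{\Gamma'}$ this operator is simply multiplication by $|\Gamma'|$, so every $\alpha\in\Cl_L^{\Gamma'}$ satisfies $|\Gamma'|\alpha = i_*(\Nm_{L/K}(\alpha))\in\im(i_*)$. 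Hence the cokernel is annihilated by $|\Gamma'|$, which implies the stated bound by $|\Gamma'|^2$.

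There is no substantive obstacle here; the whole argument is a direct computation using only standard compatibilities of the norm map with base change and with the Galois action. A purely cohomological alternative would be to take $\Gamma'$-invariants of $1\to P_L\to I_L\to\Cl_L\to 1$ and use that $H^1(\Gamma',I_L)=0$ by Shapiro's lemma together with $H^1$ of a finite group with trivial $\Z$ coefficients being zero, but the norm-map route is cleaner, requires no cohomology, and even sharpens the bound on the cokernel.
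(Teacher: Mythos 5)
Your argument is correct and self-contained; it verifies the ideal-level compatibilities $\Nm_{L/K}\circ i_* = [L:K]\cdot\mathrm{id}$ on $\Cl_K$ and $i_*\circ\Nm_{L/K}=\sum_{\sigma\in\Gamma'}\sigma$ on $\Cl_L$, then reads off the kernel and cokernel bounds. Note that the paper does not reprove this result: it simply cites \cite[Theorem 7.6]{CM90}, whose stated cokernel bound is $\lvert\Gamma'\rvert^2$. That bound reflects the cohomological route you mention at the end (taking $\Gamma'$-invariants of $1\to P_L\to I_L\to\Cl_L\to 1$, identifying the obstruction with a subgroup of $H^1(\Gamma',P_L)$ together with the ``ramification'' quotient $I_L^{\Gamma'}/I_K P_L^{\Gamma'}$, each of which is killed by $\lvert\Gamma'\rvert$, so the composite is only visibly killed by $\lvert\Gamma'\rvert^2$). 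Your direct argument via $i_*\circ\Nm_{L/K}$ restricted to $\Cl_L^{\Gamma'}$ being multiplication by $\lvert\Gamma'\rvert$ bypasses this decomposition and sharpens the cokernel bound to $\lvert\Gamma'\rvert$, which is indeed stronger than what the cited statement asserts. Both routes are standard; the norm-map route you chose is more elementary and yields the better exponent, while the cohomological route is the one that generalizes most cleanly to the $S$-unit and id\`ele-theoretic variants and is likely what underlies the original $\lvert\Gamma'\rvert^2$ in \cite{CM90}. In short: nothing to fix, and you have in fact proved a marginally sharper statement than the one quoted.
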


The direct corollary is the following.
\begin{corollary}[{ \cite[Corollary 7.7]{CM90} }]\label{cor:7.7 in CM90}Let $K_0\subseteq K\subseteq L$ be a tower of extensions such that $L|K_0$ is a $\Gamma$-extension and that $K$ is the fixed field of the subgroup $\Gamma'$ of $\Gamma$. If every prime in $S$ is not a prime divisor of $\lvert\Gamma'\rvert$, the homomorphism
 \begin{equation*}i_*:\Cl^S_{K|K_0}\to\left(\Cl^S_{L|K_0}\right)^{\Gamma'}\ \text{ is an isomorphism.}\end{equation*}
\end{corollary}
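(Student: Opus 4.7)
The plan is to bootstrap from Theorem 7.6 (cited just above) applied to the $\Gamma'$-extension $L/K$, by passing to $S$-parts and then restricting to the relative class groups using transitivity of norms.

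First I would apply Theorem 7.6 to $L/K$: the kernel and cokernel of $i_*\colon \Cl_K \to \Cl_L^{\Gamma'}$ are annihilated by $|\Gamma'|$ and $|\Gamma'|^2$ respectively. Since every prime in $S$ is coprime to $|\Gamma'|$, multiplication by $|\Gamma'|$ is invertible on the $S$-part of any finite abelian group. The functor $G \mapsto G^S$ is exact on finite abelian groups (it is a direct summand projector onto a product of Sylow subgroups), and it commutes with taking $\Gamma'$-invariants. Therefore the kernel and cokernel vanish on $S$-parts, yielding an isomorphism
\[
i_*\colon \Cl^S_K \xrightarrow{\;\sim\;} (\Cl^S_L)^{\Gamma'}.
\]

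Next I would restrict this isomorphism to the relative class groups. The key input is the standard identity $\Nm_{L/K} \circ i_* = [L:K]\cdot\operatorname{id}_{\Cl_K}$ combined with transitivity of norms, which gives
\[
\Nm_{L/K_0}(i_*\alpha) \;=\; \Nm_{K/K_0}\bigl(\Nm_{L/K}(i_*\alpha)\bigr) \;=\; \bigl(\Nm_{K/K_0}\alpha\bigr)^{[L:K]}
\]
for every $\alpha \in \Cl_K$. Since $[L:K] = |\Gamma'|$ is a unit in $\Z_S$, it acts invertibly on $\Cl^S_{K_0}$, so $\Nm_{L/K_0}(i_*\alpha) = 1$ if and only if $\Nm_{K/K_0}\alpha = 1$. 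This shows that under the isomorphism of the previous paragraph, $\alpha$ lies in $\Cl^S_{K|K_0}$ exactly when $i_*\alpha$ lies in $(\Cl^S_{L|K_0})^{\Gamma'}$, so the isomorphism restricts to the desired one
\[
\Cl^S_{K|K_0} \xrightarrow{\;\sim\;} (\Cl^S_{L|K_0})^{\Gamma'}.
\]

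There is essentially no substantive obstacle here; the argument is bookkeeping once Theorem 7.6 is available. The only point requiring a little care is that the hypothesis $\gcd(p,|\Gamma'|)=1$ for $p\in S$ is used twice in coordinated ways: once to kill the kernel and cokernel on $S$-parts, and once to translate the relative-norm condition through the exponent $[L:K]$.
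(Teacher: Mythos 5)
Your argument is correct and is exactly the "direct corollary" that the paper leaves implicit by citing \cite[Corollary 7.7]{CM90}: apply Theorem 7.6 to the $\Gamma'$-extension $L/K$, note that passing to $S$-parts kills the kernel and cokernel because $|\Gamma'|$ is invertible there, and then restrict to relative class groups via $\Nm_{L/K_0}\circ i_* = [L:K]\cdot \Nm_{K/K_0}$ together with the invertibility of $[L:K]=|\Gamma'|$ on $\Cl_{K_0}^S$. The paper does not spell out a proof, but yours matches the intended reasoning.
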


When $p\nmid\lvert\Gamma\rvert$, the above results mean that Conjecture~\ref{C:CMfull} implies a distribution on the class group of the fields $K|\Q$ with Galois closure $L|\Q$ (ordered by the discriminant of the Galois closure). 

Now consider the primes $p\mid\lvert\Gamma\rvert$. We'll see below (Lemma \ref{lemma: good primes for augmentation character}) that if $p$ is a good prime for $e_{\Gamma/\Gamma'}$ which is defined below, then $p\mid\lvert\Gamma'\rvert$, which implies that Corollary \ref{cor:7.7 in CM90} is not useful if we want to make predictions on the distribution of $p$-Sylow subgroups of class groups of non-Galois fields for $p\mid |\Gamma|$.
However, in this section we will prove Theorem~\ref{thm: Gamma'-invariant} that allows us  to deduce consequences
 Conjecture~\ref{C:CMfull} for $p$-Sylow subgroups of class groups of non-Galois fields and $p\mid |\Gamma|$.
 
\begin{definition} Let $1_{\Gamma'}$ be the unit character of $\Gamma'$, and
 \[r_{\Gamma/\Gamma'}=\Ind_{\Gamma'}^\Gamma 1_{\Gamma'}\text{ and }a_{\Gamma/\Gamma'}=r_{\Gamma/\Gamma'}-1_\Gamma.\]
Then define $e_{\Gamma/\Gamma'}$ to be the central idempotent associated to $a_{\Gamma/\Gamma'}$, i.e., if $V$ is a representation of $\Gamma$ over $\Q$ with character $a_{\Gamma/\Gamma'}$, then $e_{\Gamma/\Gamma'}$ is the minimal central idempotent of $\Q[\Gamma]$ that acts on $V$ as identity.
\end{definition}

\begin{theorem}\label{thm: Gamma'-invariant} Let $K_0\subseteq K\subseteq L$ be a tower of extensions such that $L|K_0$ is Galois with Galois group $\Gamma$ and that $K$ is the fixed field of the subgroup $\Gamma'$ of $\Gamma$. If every prime $p\in S$ is a good prime for $e_{\Gamma/\Gamma'}$, then 
\begin{enumerate}[(i)]

\item \(p\nmid[K:K_0]\) for all \(p\in S\), and we have the following split short exact sequence
\[
1\longrightarrow\Cl_{K|K_0}^S\longrightarrow\Cl_K^S\stackrel{\Nm}{\longrightarrow}\Cl_{K_0}^S\longrightarrow1,
\]
hence \(\Cl_K^S=\Cl_{K_0}^S\times\Cl_{K|K_0}^S\) where we view \(\Cl_{K_0}^S\) as a subgroup of \(\Cl_K^S\);

\item the induced homomorphism $i_*:\Cl_{K|K_0}^S\to\Cl_L^S$ is injective with image $\big(e_{\Gamma/\Gamma'}\Cl^S_L\big)^{\Gamma'}\subseteq\Cl^S_{L}$, i.e.,
\[
i_*:\Cl_{K|K_0}^S\stackrel{\sim}{\longrightarrow}\big(e_{\Gamma/\Gamma'}\Cl_L^S\big)^{\Gamma'}\text{ is an isomorphism.}
\]
\end{enumerate}\end{theorem}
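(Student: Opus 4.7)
Part (i) is essentially a splitting argument. I would first invoke Lemma~\ref{lemma: good primes for augmentation character} (which the paper states and proves elsewhere) to deduce that any prime $p$ good for $e_{\Gamma/\Gamma'}$ satisfies $p\nmid[\Gamma:\Gamma']=[K:K_0]$. With $[K:K_0]$ a unit in $\Z_S$, the identity $\Nm_{K/K_0}\circ i_*=[K:K_0]\cdot\Id$ on $\Cl^S_{K_0}$ is invertible, making $\Nm_{K/K_0}$ surjective on $S$-parts and $[K:K_0]^{-1}i_*$ an explicit splitting of the short exact sequence.

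For part (ii), the first task is to check that $i_*(\Cl^S_{K|K_0})\subseteq(e_{\Gamma/\Gamma'}\Cl^S_L)^{\Gamma'}$. The $\Gamma'$-invariance is automatic since every class pulled back from $K$ is fixed by $\Gamma'=\Gal(L/K)$. For the $e_{\Gamma/\Gamma'}$-membership, for $\alpha\in\Cl^S_K$ the cyclic $\Z_S[\Gamma]$-submodule $\langle i_*(\alpha)\rangle\subseteq\Cl^S_L$ is a quotient of the permutation module $\Z_S[\Gamma/\Gamma']$. By part~(i), $[\Gamma:\Gamma']\in\Z_S^{\times}$, so the section $1\mapsto[\Gamma:\Gamma']^{-1}\sum_{\bar\sigma}\bar\sigma$ gives a $\Z_S[\Gamma]$-splitting $\Z_S[\Gamma/\Gamma']=\Z_S\oplus\Z_S[\Gamma/\Gamma']_{\mathrm{aug}}$. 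The trivial summand's generator maps in $\Cl^S_L$ to the extension to $L$ of $\Nm_{K/K_0}(\alpha)$, which vanishes when $\alpha\in\Cl^S_{K|K_0}$; hence $\langle i_*(\alpha)\rangle$ is a quotient of $\Z_S[\Gamma/\Gamma']_{\mathrm{aug}}$, on which $e_{\Gamma/\Gamma'}$ acts as the identity.

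The remaining task, bijectivity of $i_*:\Cl^S_{K|K_0}\to(e_{\Gamma/\Gamma'}\Cl^S_L)^{\Gamma'}$, carries the main technical content. A natural candidate inverse is $\beta\mapsto|\Gamma'|^{-1}\Nm_{L/K}(\beta)$, since $i_*\circ\Nm_{L/K}=N_{\Gamma'}$ acts by $|\Gamma'|$ on $\Gamma'$-invariants, and $\Nm_{L/K_0}=N_\Gamma$ interacts nicely with the $e_{\Gamma/\Gamma'}$-part through the orthogonality $e_{\Gamma/\Gamma'}e_1=0$. The obstacle is that $|\Gamma'|$ need not be a unit in $\Z_S$ when $p\mid|\Gamma'|$. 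Rather than trying to invert $|\Gamma'|$ directly, I would translate the question into Tate cohomology: analyzing the $\Gamma'$-cohomology of $1\to\mathcal{O}_L^{\ast,S}\to L^{\ast,S}\to P_L^S\to 1$ and $1\to P_L^S\to I_L^S\to\Cl_L^S\to 1$ after projecting to the $e_{\Gamma/\Gamma'}$-component, and showing the relevant $\hat H^i(\Gamma',\cdot)$ vanish on this component, which is what gives both injectivity and surjectivity.

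The main obstacle is this last cohomological vanishing at primes $p\mid|\Gamma'|$ that are nevertheless good for $e_{\Gamma/\Gamma'}$. At such primes Cohen-Martinet's Corollary~\ref{cor:7.7 in CM90} fails, and the looser bound of \cite[Theorem 7.6]{CM90} that the capitulation kernel is annihilated by $|\Gamma'|$ is no longer sufficient. The replacement input must be the maximal-order hypothesis: $e_{\Gamma/\Gamma'}\Z_{(p)}[\Gamma]$ decomposes as a product of matrix algebras over maximal orders in local division rings, and the $\Gamma'$-cohomology of modules over such rings should be tractable enough---combined with the splitting from part~(i)---to deliver the vanishing. I expect this to be the technically subtle heart of the proof.
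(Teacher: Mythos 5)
Your structure for part (i) matches the paper exactly: invoke Lemma~\ref{lemma: good primes for augmentation character} to get $p\nmid[K:K_0]$, then use $\Nm_{K/K_0}\circ i_*=[K:K_0]$ to manufacture the splitting $[K:K_0]^{-1}i_*$. Your inclusion argument for $i_*(\Cl^S_{K|K_0})\subseteq(e_{\Gamma/\Gamma'}\Cl^S_L)^{\Gamma'}$ via the decomposition $\Z_S[\Gamma/\Gamma']\cong\Z_S\oplus\Z_S[\Gamma/\Gamma']_{\mathrm{aug}}$ is different from the paper's route (the paper folds this into an explicit short exact sequence computation, its ``Claim~2'') but it is sound: the cyclic module generated by $i_*(\alpha)$ is indeed a quotient of the permutation module, the trivial summand maps to the inflation of $\Nm_{K/K_0}\alpha$, and $e_{\Gamma/\Gamma'}$ acts as the identity on the augmentation summand since the latter has character $a_{\Gamma/\Gamma'}$.

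The genuine gap is in the heart of part (ii), and you flag it yourself: the cohomological vanishing of $\hat H^i(\Gamma',\,-\,)$ on $e_{\Gamma/\Gamma'}$-components ``should be tractable enough \ldots I expect this to be the technically subtle heart of the proof.'' That expectation is correct, but you do not supply the argument, and nothing you write makes it clear why the maximal-order hypothesis forces the needed vanishing. The paper's resolution is Lemma~\ref{lemma: cohomological triviality}: any $e\Z_S[\Gamma]$-module $G$ is cohomologically trivial as a $\Gamma$-module because one can take a projective cover $0\to L\to P\to G\to 0$ of $e\Z_S[\Gamma]$-modules, use that maximal orders are \emph{hereditary} so $L$ is again projective, and then observe that projective $e\Z_S[\Gamma]$-modules are summands of $\Z_S[\Gamma]^m$, hence of induced modules, hence cohomologically trivial. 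This is the precise mechanism by which the good-prime hypothesis enters, and without it the proof stops where you say it does. A second, smaller omission: even granted the vanishing, you still need to connect the $\Gamma'$-invariants of $e_{\Gamma/\Gamma'}I_{L,p}$ and $e_{\Gamma/\Gamma'}P_{L,p}$ back to a sequence whose cokernel is $\Cl^S_{K|K_0}$ rather than $\Cl^S_K$; the paper does this with two preparatory short exact sequences (its Claims~1 and~2, using the decomposition $I_{K,p}=(I_{K,p}\cap e_{\Gamma/\Gamma'}I_{L,p})\times(I_{K,p}\cap I_{L,p}^\Gamma)$) before assembling the final commutative diagram. Your proposed route through $1\to\O_L^{\ast,S}\to L^{\ast,S}\to P_L^S\to 1$ is not needed and does not obviously supply that bookkeeping.
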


\begin{remark}
Cohen and Martinet give another result \cite[Theorem 7.8]{CM90} that could be used to relate the class groups of non-Galois fields to Galois fields, but \cite[Theorem 7.8]{CM90} is incorrect as stated.  Their result instead should require that $\Gamma'$ has a normal complement $\Delta$ such that $\Gamma'$ acts on $\Delta$ (by conjugation) with trivial stabilizers on each non-identity orbit.  For example, this hypothesis and the theorem fails for the example $\Gamma=S_4$ and $\Gamma'=S_3$, which is an example that appears in \cite{Cohen1987}.  However, our Theorem~\ref{thm: Gamma'-invariant} can be applied in this case and in every case in which the Cohen-Martinet heuristics make a prediction.
\end{remark}

Note that Theorem 7.4, applied in the case $K_0=\Q$, has the following corollary.
\begin{corollary}\label{C:cap}
Let $L/\Q$ be a \(\Gamma\)-field and \(K\) be the fixed field of \(\Gamma'\). If $p$ is good for $e_{\Gamma/\Gamma'}$, then the order of the capitulation kernel
$$
\ker i_* =\ker( \Cl_K \ra \Cl_L)
$$
is not divisible by $p$.
\end{corollary}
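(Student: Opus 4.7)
The plan is to deduce this corollary directly from Theorem~\ref{thm: Gamma'-invariant} applied with the base field $K_0 = \Q$ and with $S$ taken to be the singleton $\{p\}$. The hypothesis that $p$ is good for $e_{\Gamma/\Gamma'}$ is exactly the condition needed for Theorem~\ref{thm: Gamma'-invariant} to apply, so no extra work is required to set up the hypotheses.

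First I would observe the simplification that arises from $K_0 = \Q$: since $\Cl_\Q$ is trivial, part (i) of Theorem~\ref{thm: Gamma'-invariant} collapses to the statement $\Cl_K^S = \Cl_{K|\Q}^S$. In particular, the $p$-Sylow subgroup $\Cl_{K,p}$ coincides with the relative class group $\Cl_{K|\Q}^S$, so the $p$-part of the capitulation map $i_* \colon \Cl_K \to \Cl_L$ is naturally identified with the restriction of $i_*$ to $\Cl_{K|\Q}^S$.

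Next I would invoke part (ii) of Theorem~\ref{thm: Gamma'-invariant}, which asserts that
\[
i_* \colon \Cl_{K|\Q}^S \stackrel{\sim}{\longrightarrow} \bigl(e_{\Gamma/\Gamma'}\Cl_L^S\bigr)^{\Gamma'}
\]
is an isomorphism, and in particular injective. Thus the restriction of $i_*$ to the $p$-Sylow subgroup of $\Cl_K$ is injective, which means that $\ker i_*$ contains no nontrivial element of $p$-power order. Equivalently, $p$ does not divide $|\ker i_*|$, which is exactly the conclusion of the corollary.

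In this case there is no real obstacle: once Theorem~\ref{thm: Gamma'-invariant} is in hand, the corollary is immediate, and the only mild subtlety is the bookkeeping between the $S$-part notation (for $S=\{p\}$) and the ordinary $p$-Sylow subgroup, together with the observation that passing to the relative class group is automatic over $K_0 = \Q$.
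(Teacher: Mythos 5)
Your proposal is correct and follows exactly the paper's intended deduction: the paper simply notes that the corollary is Theorem~\ref{thm: Gamma'-invariant} applied with $K_0 = \Q$, and your write-up spells out the straightforward bookkeeping (triviality of $\Cl_\Q$ collapsing part (i), injectivity in part (ii) killing the $p$-part of the kernel) that this application involves.
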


For many pairs $(\Gamma,\Gamma')$, there is at least one prime $p\mid |\Gamma'|$ that is good for $e_{\Gamma/\Gamma'}$, e.g. $p$ is good for $(S_{p+1},S_p)$, 
and $2$ is good for $(A_5,A_4)$, and $5$ is good for $S_5$ or $A_5$ with a certain subgroup of index $6$ (a stabilizer of the action on $\P^1_{\F_5}$).  For these primes, Corollary~\ref{C:cap} appears to be a new result on the capitulation kernel.

From Theorem \ref{thm: Gamma'-invariant}, we see that Conjecture 6.1 implies a conjecture on averages of functions on class groups of non-Galois fields, in which the finite abelian group $H$ appears with weight proportional to
 \begin{equation}\label{eqn: probability of non-Galois fields in section 6}
 \sum_{\begin{subarray}{c}G/\sim\\G^{\Gamma'}\cong H\end{subarray}}\frac{1}{\lvert G\rvert^{\uu}\lvert\Aut_\Gamma(G)\rvert},\end{equation}
where $G$ runs through all finite $e_{\Gamma/\Gamma'}\Z_S[\Gamma]$-modules, up to isomorphism, such that $G^{\Gamma'}\cong H$  as abelian groups.
We'll spend the rest of this section proving Theorem \ref{thm: Gamma'-invariant}. In the next section we will give a simple expression for (\ref{eqn: probability of non-Galois fields in section 6}) and an interpretation of the values appearing in (\ref{eqn: probability of non-Galois fields in section 6}).
We start with a useful statement that we will use repeatedly.

\begin{lemma}\label{lemma: cohomological triviality}
Let $e$ be a central idempotent in $\Q[\Gamma]$ such that $e\in\Z_S[\Gamma]$ and that $e\Z_S[\Gamma]$ is a maximal order in $e\Q[\Gamma]$. Then any $e\Z_S[\Gamma]$-module $G$ is cohomologically trivial as a $\Gamma$-module, i.e., for every subgroup $\Lambda$ of $\Gamma$ and every integer $n\in\Z$, we have
 \[\hat{H}^n(\Lambda,G)=0,\]
where $\hat{H}$ denotes Tate cohomology.
\end{lemma}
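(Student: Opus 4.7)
The plan is to build a short projective resolution of $G$ over $\Z_S[\Lambda]$ and pass to Tate cohomology. Since $e\Z_S[\Gamma]$ is a maximal $\Z_S$-order in the semisimple $\Q$-algebra $e\Q[\Gamma]$, the Auslander--Goldman theorem (see \cite{R03maximal}) ensures that $e\Z_S[\Gamma]$ is hereditary, so every $e\Z_S[\Gamma]$-module has projective dimension at most one. I would therefore choose such a resolution
\[
0 \lra P_1 \lra P_0 \lra G \lra 0
\]
with $P_0, P_1$ projective over $e\Z_S[\Gamma]$.

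Because $e$ is a central idempotent, $e\Z_S[\Gamma]$ is a ring-theoretic direct factor of $\Z_S[\Gamma]$, so $P_0$ and $P_1$ are also projective as $\Z_S[\Gamma]$-modules. Since $\Z_S[\Gamma]$ is free of finite rank as a $\Z_S[\Lambda]$-module (with basis given by coset representatives of $\Gamma/\Lambda$), restriction of scalars preserves projectivity, and each $P_i$ is a projective $\Z_S[\Lambda]$-module. Projective $\Z_S[\Lambda]$-modules in turn are cohomologically trivial: a free $\Z_S[\Lambda]$-module is a direct sum of copies of $\Z_S[\Lambda] \cong \Ind_{\{1\}}^{\Lambda} \Z_S$, which has vanishing Tate cohomology in all degrees by Shapiro's lemma, and this vanishing passes to direct summands. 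Applying the long exact sequence of Tate cohomology to the resolution above then forces $\hat H^n(\Lambda, G) = 0$ for every $n \in \Z$.

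The main delicate point is the invocation of Auslander--Goldman to convert the maximal-order hypothesis into the hereditary property; the rest is formal once one verifies that projectivity descends through the direct-factor inclusion $e\Z_S[\Gamma] \hookrightarrow \Z_S[\Gamma]$ and the restriction of scalars to $\Z_S[\Lambda]$. No localization at individual primes of $S$ is needed, since hereditariness of $e\Z_S[\Gamma]$ is already a global statement over the Dedekind domain $\Z_S$.
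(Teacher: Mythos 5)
Your proof is correct and follows essentially the same route as the paper: invoke hereditariness of the maximal order $e\Z_S[\Gamma]$ to get a length-one projective resolution of $G$, observe that projective $e\Z_S[\Gamma]$-modules are summands of free $\Z_S[\Gamma]$-modules, use that $\Z_S[\Gamma]$ is induced from the trivial subgroup and hence cohomologically trivial, and conclude via the long exact sequence in Tate cohomology. The only cosmetic difference is that you spell out the restriction to $\Z_S[\Lambda]$ and Shapiro's lemma explicitly, where the paper simply cites the cohomological triviality of induced modules and their summands.
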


\begin{proof}
Note that via the ring homomorphism $e:\Z_S[\Gamma]\to e\Z_S[\Gamma]$ given by $x\mapsto ex$,  all $e\Z_S[\Gamma]$-modules are also $\Gamma$-modules.

Let $G$ be any $e\Z_S[\Gamma]$-module. We can find a projective $e\Z_S[\Gamma]$-module $P$ with surjective homomorphism $\varphi:P\to G$. Then we have a short exact sequence of $e\Z_S[\Gamma]$-modules
 \[0\to L\to P\to G\to0,\]
where $L$ is the kernel of $\varphi$. Since maximal orders are hereditary (e.g., see \cite[Theorem 21.4]{R03maximal}) the submodule $L$ of $P$ is also a projective $e\Z_S[\Gamma]$-module. Since $e\in\Z_S[\Gamma]$, we know that, as $\Gamma$-modules, $e\Z_S[\Gamma]$ is a direct summand of $\Z_S[\Gamma]$. Therefore $P$ and $L$, as summands of the module $(e\Z_S[\Gamma])^m$ for some $m$, are summands of the module $(\Z_S[\Gamma])^m$. Note that $\Z_S[\Gamma]$ is an induced $\Gamma$-module and hence cohomologically trivial. So $P$ and $L$, as summands of some induced $\Gamma$-module, are both cohomologically trivial. Then the short exact sequence implies that $G$ is also cohomologically trivial.
\end{proof}

Next, we note  the following property of the central idempotent $e_{\Gamma/\Gamma'}$ and its relationship to
$$e_1'=\dfrac{1}{\lvert\Gamma'\rvert}\sum_{\tau\in\Gamma'}\tau.$$

\begin{lemma}\label{lem: Frobenius reciprocity} If $V$ is any $\Q$-representation of $\Gamma$ of character $\chi$, then
 \[\dim_\Q V^{\Gamma'}=\langle1_{\Gamma'},\operatorname{Res}^\Gamma_{\Gamma'}\chi\rangle_{\Gamma'}=\langle r_{\Gamma/\Gamma'},\chi\rangle_\Gamma.\]
In particular, let $\chi_1,\dots,\chi_m$ be all the $\Q$-irreducible characters of $\Gamma$ such that $e_i$ is associated to $\chi_i$ for all $i=1,\dots,m$, then
 \[e_ie_1'\neq0\iff e_i=e_1\text{ or }e_i\cdot e_{\Gamma/\Gamma'}=e_i,\,\forall i=1,\dots,m.\]
\end{lemma}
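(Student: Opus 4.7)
The plan is to exploit that $e_1'=\frac{1}{|\Gamma'|}\sum_{\tau\in\Gamma'}\tau$ is the averaging idempotent projecting any $\Q[\Gamma']$-module onto its $\Gamma'$-invariants. For a $\Q$-representation $V$ of $\Gamma$ with character $\chi$, restriction makes $V$ a $\Q[\Gamma']$-module and $e_1'V=V^{\Gamma'}$. Since $e_1'$ is idempotent, I will compute
\[
\dim_\Q V^{\Gamma'}=\tr(e_1'\mid V)=\frac{1}{|\Gamma'|}\sum_{\tau\in\Gamma'}\chi(\tau)=\langle 1_{\Gamma'},\operatorname{Res}^\Gamma_{\Gamma'}\chi\rangle_{\Gamma'},
\]
and then Frobenius reciprocity yields $\langle 1_{\Gamma'},\operatorname{Res}^\Gamma_{\Gamma'}\chi\rangle_{\Gamma'}=\langle \operatorname{Ind}^\Gamma_{\Gamma'} 1_{\Gamma'},\chi\rangle_\Gamma=\langle r_{\Gamma/\Gamma'},\chi\rangle_\Gamma$, giving the first chain of equalities.

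For the second assertion, let $V_i$ denote the simple $\Q[\Gamma]$-module on which $e_i$ acts as the identity and the other $e_j$ as zero; its character is $\chi_i$. I will argue that $e_ie_1'\neq 0$ in $\Q[\Gamma]$ if and only if $e_1'$ acts nontrivially on $V_i$, i.e.\ $V_i^{\Gamma'}\neq 0$, using that $e_i\Q[\Gamma]$ is a power of $V_i$ as a left $\Q[\Gamma]$-module. By the first part, this is equivalent to $\langle r_{\Gamma/\Gamma'},\chi_i\rangle_\Gamma>0$, and since $r_{\Gamma/\Gamma'}=1_\Gamma+a_{\Gamma/\Gamma'}$, this happens precisely when either $\chi_i=1_\Gamma$ (i.e.\ $e_i=e_1$) or $\chi_i$ is a $\Q$-irreducible constituent of $a_{\Gamma/\Gamma'}$. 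By the definition of $e_{\Gamma/\Gamma'}$ as the sum of the irreducible central idempotents whose associated $\Q$-irreducible characters appear in $a_{\Gamma/\Gamma'}$, the latter condition is equivalent to $e_i\cdot e_{\Gamma/\Gamma'}=e_i$, completing the proof.

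No serious obstacle appears: the argument reduces to standard character theory plus Frobenius reciprocity. The only step requiring mild attention is the explicit description of $e_{\Gamma/\Gamma'}$ as $\sum_j e_j$ ranging over $j$ with $\chi_j$ a constituent of $a_{\Gamma/\Gamma'}$, but this is immediate from the definition together with the fact that the $\{e_j\}$ are pairwise orthogonal central idempotents summing to $1$.
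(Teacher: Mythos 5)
Your proof is correct and follows essentially the same route as the paper's: the first chain of equalities is Frobenius reciprocity (your trace computation just makes the first equality explicit), and for the second part both you and the paper observe that $e_i\Q[\Gamma]$ is a direct sum of copies of $V_i$ and that its $\Gamma'$-invariants are $e_1'e_i\Q[\Gamma]$, so $e_ie_1'\neq 0$ iff $\langle r_{\Gamma/\Gamma'},\chi_i\rangle>0$.
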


\begin{proof} 
The first identity is exactly given by Frobenius reciprocity.
For the second statement, note that $e_i\Q[\Gamma]$ is a representation of character $n_i\chi_i$ for some $n_i\geq1$, and that
$(e_i\Q[\Gamma])^{\Gamma'}=e_1'e_i\Q[\Gamma].$
\end{proof}

\begin{remark}\label{remark: central idempotent associated to a character} 
We let  $e_1,e_2,\dots,e_k$ be all the distinct irreducible central idempotents of $\Q[\Gamma]$ such that $e\cdot e_1'\neq0$.
By the above lemma, 
 \[e_{\Gamma/\Gamma'}=e_2+\cdots+e_k,\]
which could be taken as an alternative definition for $e_{\Gamma/\Gamma'}$.
\end{remark}

\begin{lemma}\label{lemma: e-relative class group=e-class group} Let \(L|K_0\) be a \(\Gamma\)-extension of number fields. If \(e\) is a central idempotent of \(\Q[\Gamma]\) such that \(e_1\cdot e=0\) and \(p\) is a prime number that is good for \(e\), then
\[
e\Cl_{L}[p^\infty]=e\Cl_{L|K_0}[p^\infty].
\]
\end{lemma}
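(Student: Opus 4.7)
The plan is to combine the augmentation $\epsilon\colon\Q[\Gamma]\to\Q$ (sending each group element to $1$) with the $\Gamma$-invariance of the norm map $\Nm\colon\Cl_L\to\Cl_{K_0}$. Because $p$ is good for $e$, we have $e\in\Z_{(p)}[\Gamma]$, so $e$ acts on the $\Z_{(p)}[\Gamma]$-module $\Cl_L[p^\infty]$. The inclusion $e\Cl_{L|K_0}[p^\infty]\subseteq e\Cl_L[p^\infty]$ is immediate from $\Cl_{L|K_0}\subseteq\Cl_L$, so the content is the reverse inclusion.

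The first step is a computation in the group ring. Since $\epsilon(e_1)=\frac{1}{|\Gamma|}\sum_{\sigma\in\Gamma}\epsilon(\sigma)=1$, applying $\epsilon$ to the hypothesis $e_1\cdot e=0$ gives $0=\epsilon(e_1)\epsilon(e)=\epsilon(e)$. In particular, $e$ annihilates every $\Z_{(p)}[\Gamma]$-module on which $\Gamma$ acts trivially.

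The second step exploits $\Gamma$-equivariance of $\Nm$. For any fractional ideal $I$ of $L$ and $\tau\in\Gamma$, we have $\Nm(\tau I)=\prod_{\sigma\in\Gamma}(\sigma\tau)I=\prod_{\sigma\in\Gamma}\sigma I=\Nm(I)$, so $\Nm\colon\Cl_L\to\Cl_{K_0}$ is $\Gamma$-equivariant with $\Gamma$ acting trivially on $\Cl_{K_0}$. Extending linearly, $\Nm(x\alpha)=\epsilon(x)\Nm(\alpha)$ for every $x\in\Z_{(p)}[\Gamma]$ and $\alpha\in\Cl_L[p^\infty]$. Taking $x=e$ yields $\Nm(e\alpha)=0$, so $e\alpha\in\ker(\Nm)\cap\Cl_L[p^\infty]=\Cl_{L|K_0}[p^\infty]$. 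Multiplying through by the idempotent $e$ then gives $e\Cl_L[p^\infty]=e\cdot e\Cl_L[p^\infty]\subseteq e\Cl_{L|K_0}[p^\infty]$, the required reverse inclusion.

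I do not foresee a real obstacle: the whole argument rests on the elementary observation that $e_1\cdot e=0$ together with $\epsilon(e_1)=1$ forces $\epsilon(e)=0$, after which the $\Gamma$-invariance of $\Nm$ does all the work. In particular, neither the cohomological triviality of Lemma~\ref{lemma: cohomological triviality} nor the refined structure of $e_{\Gamma/\Gamma'}$ is needed here.
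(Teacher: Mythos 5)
Your proof is correct, and it takes a genuinely different and substantially simpler route than the paper's. The key difference: you use the $\Gamma$-invariance of the norm map $\Nm_{L|K_0}\colon\Cl_L\to\Cl_{K_0}$ \emph{itself}, which immediately yields $\Nm(x\alpha)=\epsilon(x)\Nm(\alpha)$ and hence $\Nm(e\alpha)=0$ directly in $\Cl_{K_0}$. The paper instead computes only the composite $\iota\circ\Nm$ (where $\iota\colon\Cl_{K_0}\to\Cl_L$ is extension of ideals), observing that this equals the action of $\sum_{\gamma\in\Gamma}\gamma=|\Gamma|e_1$ on $\Cl_L[p^\infty]$ and so kills $e\Cl_L[p^\infty]$. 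Since $\iota$ need not be injective, knowing $\iota(\Nm(e\alpha))=0$ in $\Cl_L$ does not immediately give $\Nm(e\alpha)=0$ in $\Cl_{K_0}$, and the remainder of the paper's argument is devoted to bridging this gap via the exact sequence $1\to eP_{L,p}\to eI_{L,p}\to e\Cl_L[p^\infty]\to 1$ and the cohomological triviality from Lemma~\ref{lemma: cohomological triviality}. Your more direct route sidesteps that entirely. A further upshot, which you noticed: you only use $e\in\Z_{(p)}[\Gamma]$ (so that $e$ acts on $\Cl_L[p^\infty]$), not the stronger hypothesis that $e\Z_{(p)}[\Gamma]$ is a maximal order, so your argument proves the lemma under a weaker assumption on $p$.
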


\begin{remark} This lemma shows that taking the relative class group has no effect if one only cares about good primes for some central idempotent \(e\in\Q[\Gamma]\). Therefore in the statement of the Cohen-Lenstra-Martinet Conjectures (see Conjecture \ref{C:Galois} and \ref{C:CMfull}) we do not need to use the concept of relative class group.\end{remark}

\begin{proof}
First of all let's introduce some notations. For a number field \(k\), let \(I_k\) be the group of fractional ideals and \(P_k\) the group of principal ideals. Then for any prime \(p\), let \(I_{k,p}:=\Z_{(p)}\otimes_\Z I_k\) and \(P_{k,p}:=\Z_{(p)}\otimes_\Z P_{k}\). Note that we have a short exact sequence
\[
1\to P_{k,p}\to I_{k,p}\to\Cl_k[p^\infty]\to1.
\]
Since \(e\in\Z_{(p)}[\Gamma]\), the notion \(e\Cl_L[p^\infty]\) and \(e\Cl_{L|K_0}[p^\infty]\) are well-defined. It is clear that \(e\Cl_{L|K_0}[p^\infty]\subseteq e\Cl_{L}[p^\infty]\). Our goal is to show that \(\Nm_{L|K_0}(I)\) is indeed a principal ideal of \(K_0\) for all ideals \(I\in I_L\) such that the ideal class \([I]\) is contained in \(e\Cl_L[p^\infty]\).

For any \(x\in\Cl_L[p^\infty]\), we have
\[
\Nm_{L|K_0}(ex)=\sum_{\gamma\in\Gamma}\gamma ex=\big(\lvert\Gamma\rvert e_1\big)e\cdot x=0\cdot x=0.
\]
Therefore \(\Nm_{L|K_0}:e\Cl_L[p^\infty]\to e\Cl_L[p^\infty]\) is actually the zero map. Claim: \((eP_{L,p})^\Gamma=P_{K_0,p}\cap eP_{L,p}\). We first prove the claim. Recall that if \(e\Z_{(p)}[\Gamma]\) is a maximal order then any \(e\Z_{(p)}[\Gamma]\)-module is cohomologically trivial by Lemma \ref{lemma: cohomological triviality}. In particular
\[
1=\widehat{H}^0(\Gamma,eP_{L,p})=(eP_{L,p})^\Gamma/\Nm_{L|K}eP_{L,p}
\]
This shows that if a ``principal ideal'' \(I\in eP_{L,p}\) is fixed by \(\Gamma\), then it is represented by a ``principal ideal'' of \(K_0\), hence the claim.

By cohomological triviality again, we know that \(e\Cl_L[p^\infty],eI_{L,p},eP_{L,p}\) are all cohomologically trivial, so
\[
(e\Cl_L[p^\infty])^\Gamma=(eI_{L,p})^\Gamma/(eP_{L,p})^\Gamma=(eI_{L,p})^\Gamma/(P_{K,p}\cap eI_{L,p}).
\]
This implies that for any \(ex\in(e\Cl_{L}[p^\infty])^\Gamma\), we have \(ex=1\) if and only if it is represented by a ``principal ideal'' of \(K\) (an element in \(P_{K,p}\)), hence \(e\Cl_L[p^\infty]\) is indeed generated by ideals whose norm in \(\Cl_{K_0}\) is \(0\), i.e., \(e\Cl_L[p^\infty]=e\Cl_{L|K_0}[p^\infty]\). 
\end{proof}

We need one more lemma for the proof of the theorem.
\begin{lemma}\label{lemma: good primes for augmentation character}
If $p$ is a prime such that $e_{\Gamma/\Gamma'}\in\Z_{(p)}[\Gamma]$, then $p$ does not divide $\lvert\Gamma/\Gamma'\rvert$. In particular, if $p\mid \lvert\Gamma/\Gamma'\rvert$, then $p$ is bad.
\end{lemma}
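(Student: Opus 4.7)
The plan is to derive the divisibility constraint by applying $e_{\Gamma/\Gamma'}$ to the integral form of the permutation module $\Q[\Gamma/\Gamma']$ and observing that $e_{\Gamma/\Gamma'}$ must then preserve this $\Z_{(p)}$-lattice.

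First, I would identify $e_{\Gamma/\Gamma'}$ as a concrete projection on $V := \Q[\Gamma/\Gamma']$. By Remark \ref{remark: central idempotent associated to a character}, $e_1$ and $e_{\Gamma/\Gamma'}$ together exhaust the primitive central idempotents corresponding to the $\Q$-irreducible constituents of the induced representation $\Ind_{\Gamma'}^{\Gamma}\mathbf{1}_{\Gamma'} = V$, so $e_1 + e_{\Gamma/\Gamma'}$ acts as the identity on $V$. On the other hand $V$ decomposes as a $\Q[\Gamma]$-module as $V = \Q \cdot \sigma \oplus \operatorname{Aug}$, where $\sigma := \sum_{h\Gamma' \in \Gamma/\Gamma'} h\Gamma'$ spans the trivial subrepresentation and $\operatorname{Aug}$ is the kernel of the augmentation $\epsilon:V \to \Q$. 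Since $e_1$ projects $V$ onto $\Q\sigma$ and $e_{\Gamma/\Gamma'}$ is orthogonal to $e_1$, it follows that $e_{\Gamma/\Gamma'}$ acts on $V$ as the projection onto $\operatorname{Aug}$.

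Next, I would write this projection explicitly. Every vector $v \in V$ decomposes uniquely as
\[
v = \tfrac{\epsilon(v)}{|\Gamma/\Gamma'|}\,\sigma + \Bigl(v - \tfrac{\epsilon(v)}{|\Gamma/\Gamma'|}\,\sigma\Bigr),
\]
so for any basis element $h\Gamma' \in V$ we have
\[
e_{\Gamma/\Gamma'}\cdot (h\Gamma') \;=\; h\Gamma' - \tfrac{1}{|\Gamma/\Gamma'|}\,\sigma.
\]

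Now suppose $e_{\Gamma/\Gamma'} \in \Z_{(p)}[\Gamma]$. The permutation module $M := \Z_{(p)}[\Gamma/\Gamma']$ is a left $\Z_{(p)}[\Gamma]$-module sitting inside $V$, so $e_{\Gamma/\Gamma'}$ must send $M$ into itself. Applying it to the basis element $h\Gamma' \in M$ yields $h\Gamma' - \sigma/|\Gamma/\Gamma'| \in M$, which forces $\sigma/|\Gamma/\Gamma'| \in M$, and hence $1/|\Gamma/\Gamma'| \in \Z_{(p)}$, i.e.\ $p \nmid |\Gamma/\Gamma'|$. This is the desired conclusion; the ``in particular'' statement follows by contrapositive. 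The argument is short and I do not anticipate any real obstacle beyond correctly invoking Remark \ref{remark: central idempotent associated to a character} to identify $e_{\Gamma/\Gamma'}$ with the augmentation projection.
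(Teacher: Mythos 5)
Your proof is correct and is essentially the paper's argument in a cleaner presentation: the paper works with the lattice $P=\Z_{(p)}[\Gamma]e_1'$, which is canonically isomorphic to your $M=\Z_{(p)}[\Gamma/\Gamma']$, derives $e_1=e_1'-e_{\Gamma/\Gamma'}e_1'\in P$, and computes $P^\Gamma=\Z_{(p)}\cdot|\Gamma/\Gamma'|e_1$ to conclude. Your version bypasses the invariants computation by applying $e_{\Gamma/\Gamma'}$ directly to a coset and reading off the denominator, but the underlying observation — that the idempotent's action on the permutation lattice produces a coefficient $1/|\Gamma/\Gamma'|$ that must lie in $\Z_{(p)}$ — is the same.
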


\begin{proof}
Let
$$P:=\Z_{(p)}[\Gamma]e_1'=\{xe_1'|\,x\in\Z_{(p)}[\Gamma]\}$$
be a left $\Z_{(p)}[\Gamma]$-module. We know that $e_{\Gamma/\Gamma'}e_1'$ is contained in $P$, because $e_{\Gamma/\Gamma'}$ is already contained in $\Z_{(p)}[\Gamma]$. This implies that $e_1=e_1\cdot e_1'$ is also contained in $P$, for the idempotent $e_1'$ is contained in $P$ and could be written as
 \[e_1'=1\cdot e_1'=(e_1+\cdots+e_m)\cdot e_1'=e_1+e_2e_1'+\cdots+e_ke_1'=e_1+e_{\Gamma/\Gamma'}e_1'.\]

Let $\{\sigma_1,\dots,\sigma_q\}$ be a fixed set of representatives of left cosets $\Gamma/\Gamma'$. Then every element $x\in P$ can be written uniquely as
 \[x=\sum_{i=1}^qa_i\sigma_ie_1',\]
where $a_i\in\Z_{(p)}$. If in addition, $x$ is fixed by $\Gamma$, then all the $a_i$ must be the same, which implies that if we let
 \[x_0:=\sum_{i=1}^s1\cdot\sigma_ie_1'=|\Gamma/\Gamma'|\cdot e_1,\]
then $P^\Gamma=\Z_{(p)}x_0$. Since $e_1\in P^\Gamma$, we know that there exists some $a\in\Z_{(p)}$ such that $ax_0=e_1$, i.e.,
 \[a\cdot\lvert\Gamma/\Gamma'\rvert=1.\]
So $\lvert\Gamma/\Gamma'\rvert$ is a unit in $\Z_{(p)}$, i.e., $p$ does not divide $\lvert\Gamma/\Gamma'\rvert$.
\end{proof}

Finally let's prove Theorem \ref{thm: Gamma'-invariant}.

\begin{proof}[Proof of Theorem \ref{thm: Gamma'-invariant}]
It is clear that we can reduce to the case where the set $S$ is the singleton $\{p\}$ with $p$ a good prime for $e_{\Gamma/\Gamma'}$. 

For (i), by Lemma \ref{lemma: good primes for augmentation character}, we know that \(p\nmid\lvert\Gamma/\Gamma'\rvert=[K:K_0]\). Then let's view \(\Cl_{K_0}[p^\infty]\) as a subgroup of \(\Cl_K[p^\infty]\) via the induced map \(i_*:\Cl_{K_0}\to\Cl_K\). We have the following short exact sequence
\[
1\to\Cl_{K|K_0}[p^\infty]\to\Cl_K[p^\infty]\stackrel{n_*}{\to}\Cl_{K_0}[p^\infty]\to1
\]
where \(n_*\) is induced by the norm map \(\Nm_{K|K_0}\), because \(n_*(\Cl_{K_0}[p^\infty])=[K:K_0]\cdot\Cl_{K_0}[p^\infty]=\Cl_{K_0}[p^\infty]\). Then by \(i_*\circ n_*=[K:K_0]\), we see that \(\dfrac{1}{[K:K_0]}i_*\) is well-defined for \(\Cl_{K_0}^S\) and splits \(n_*\). This shows (i).

\hspace{1 mm}

Then let's prove (ii). For a number field $k$, let $I_k$ denote the group of fractional ideals, and $P_k$ the group of principal ideals. Then for $k$, we have the short exact sequence
\[
1\to P_k\to I_k\to\Cl_{k}\to1,
\]
Tensoring with $\Z_{(p)}$ gives us a short exact sequence
\[
1\to\Z_{(p)}\otimes_\Z P_k\to\Z_{(p)}\otimes_\Z I_k\to\Cl_{k}[p^\infty]\to1.
\]
Let $P_{k,p}:=\Z_{(p)}\otimes_\Z P_k$ and $I_{k,p}:=\Z_{(p)}\otimes_\Z I_k$. And for an element $x_k\in I_{k,p}$, we let $[x_k]$ denote its image in the class group.

Recall the set-up in the statement: Let $K_0\subseteq K\subseteq L$ be a tower of extensions such that $\Gal(L|K_0)=\Gamma$ and that $\Gal(L|K)=\Gamma'\subseteq\Gamma$.

\hspace{1 mm}

\item\emph{Claim 1}: By viewing $I_{K,p}$ as a subgroup of $I_{L,p}$ via the embedding $i:I_K\to I_L$, we have an exact sequence
\begin{equation}\label{eqn: Gamma-invariant is principal} I_{K,p}\cap I_{L,p}^\Gamma\to\Cl_K[p^\infty]\to\Cl_{K|K_0}[p^\infty]\to1,\end{equation}
where the map \(\Cl_K[p^\infty]\to\Cl_{K|K_0}[p^\infty]=\Cl_K[p^\infty]/\Cl_{K_0}[^\infty]\) is the quotient map given by (i). Let's prove the claim. First of all \(I_{K_0,p}\subseteq I_{L,p}^\Gamma\), therefore the image of \(I_{K,p}\cap I_{L,p}^\Gamma\) in \(\Cl_K[p^\infty]\) must contain \(\Cl_{K_0}[p^\infty]\). If \(x\in I_{K,p}\cap I_{L,p}^\Gamma\) gives an ideal class \([x]\), then by (i), we can write \([x]=[y]\cdot[z]\) with \([y]\in\Cl_{K_0}[p^\infty]\) and \([z]\in\Cl_{K|K_0}[p^\infty]\). The computation
\[
[x]^{[K:K_0]}=\Nm_{K|K_0}[x]=\Nm_{K|K_0}[y]\cdot\Nm_{K|K_0}[z]=[y]^{[K:K_0]}
\]
shows that \([z]=1\) and \([x]\in\Cl_{K_0}[p^\infty]\). Therefore the image of \(I_{K,p}\cap I_{L,p}^\Gamma\) is exactly \(\Cl_{K_0}[p^\infty]\), the kernel of \(\Cl_K[p^\infty]\to\Cl_{K|K_0}[p^\infty]\).

\hspace{1 mm}

\item\emph{Claim 2}: We have a short exact sequence
 \begin{equation}\label{eqn: 2} 1\to P_{K,p}\cap e_{\Gamma/\Gamma'}P_{L,p}\to I_{K,p}\cap e_{\Gamma/\Gamma'}I_{L,p}\to\Cl_{K|K_0}[p^\infty]\to1.\end{equation}
We prove this claim now. First of all, the ideal classes given by \(I_{K,p}\cap e_{\Gamma/\Gamma'}I_{L,p}\) are contained in the relative class group \(\Cl_{K|K_0}[p^\infty]\), because
\[
\Nm_{K|K_0}[y]=\Nm_{K|K_0}e_{\Gamma/\Gamma'}[y]=\sum_{\sigma\in\Gamma/\Gamma'}\sigma(e_{r_{\Gamma/\Gamma'}}-e_1)\cdot[y]=|\Gamma/\Gamma'|(e_1e_{r_{\Gamma/\Gamma'}}-e_1)\cdot[y]=1.
\]
We then only need to show the surjectivity. As a $\Z_{(p)}[\Gamma]$-module, $I_{L,p}$ admits the following decomposition
 \[I_{L,p}=e_{\Gamma/\Gamma'}I_{L,p}\times(1-e_{\Gamma/\Gamma'})I_{L,p}.\]
Consequently $I^{\Gamma'}_{L,p}=\big(e_{\Gamma/\Gamma'}I_{L,p}\big)^{\Gamma'}\times\big((1-e_{\Gamma/\Gamma'})I_{L,p}\big)^{\Gamma'}$. By $I_{L,p}\hookrightarrow V:=\Q\otimes_{\Z_{(p)}}I_{L,p}$, we know that $x\in I_{L,p}$ is fixed by $\Gamma'$ if and only if $e_1'\cdot x=x$ where the action happens in $V$. Since 
 \[e_1'\cdot(1-e_{\Gamma/\Gamma'})=e_1'\cdot(e_1+e_{k+1}+\cdots+e_m)=e_1'\cdot e_1=e_1,\]
for any element $z\in(1-e_{\Gamma/\Gamma'})V$, it is fixed by $\Gamma'$ if and only if it is fixed by $\Gamma$. Therefore if $x\in I_{L,p}^{\Gamma'}$, then 
\[x=y\cdot z\]
with $y\in\big(e_{\Gamma/\Gamma'}I_{L,p}\big)^{\Gamma'}$ and $z\in I_{L,p}^\Gamma$. By Lemma \ref{lemma: cohomological triviality}, the $e_{\Gamma/\Gamma'}\Z_{(p)}[\Gamma]$-module $e_{\Gamma/\Gamma'}I_{L,p}$ is cohomologically trivial, hence
 \[(e_{\Gamma/\Gamma'}I_{L,p})^{\Gamma'}/\Nm_{L|K}e_{\Gamma/\Gamma'}I_{L,p}=\hat{H}^0(\Gamma',e_{\Gamma/\Gamma'}I_{L,p})=1.\]
Therefore, $y$ is always an element in $I_{K,p}$. If the element above $x=y\cdot z$ is contained in $I_{K,p}$, then $z$ is also contained in $I_{K,p}$, i.e.,
 \[I_{K,p}=\big(I_{K,p}\cap e_{\Gamma/\Gamma'}I_{L,p}\big)\times\big(I_{K,p}\cap I^\Gamma_{L,p}\big),\]
where the direct product is the direct product as abelian groups. Then by (\ref{eqn: Gamma-invariant is principal}), $[z]\in\Cl_{K_0}[p^\infty]$, and $[x]\equiv[y]$ in the relative class group $\Cl_{K|K_0}[p^\infty]=\Cl_K[p^\infty]/\Cl_{K_0}[p^\infty]$, which proves Claim 2. Moreover, the claim also tells us that $i_*(\Cl_{K|K_0}[p^\infty])\subseteq e_{\Gamma/\Gamma'}\Cl_{L|K_0}[p^\infty]$.

\hspace{1 mm}

\item\emph{Final Step}: Since $p$ is a good prime for $e_{\Gamma/\Gamma'}$, we know that $e_{\Gamma/\Gamma'}\in\Z_{(p)}[\Gamma]$ and $e_{\Gamma/\Gamma'}\Z_{(p)}[\Gamma]$ is a maximal order of $e_{\Gamma/\Gamma'}\Q[\Gamma]$, hence obtain the following short exact sequence
 \[1\to e_{\Gamma/\Gamma'}P_{L,p}\to e_{\Gamma/\Gamma'}I_{L,p}\to e_{\Gamma/\Gamma'}\Cl_{L}[p^\infty]\to1,\]
where every object showing up is an $e_{\Gamma/\Gamma'}\Z_{(p)}[\Gamma]$-module. Then by Lemma \ref{lemma: cohomological triviality}, we know that $e_{\Gamma/\Gamma'}P_{L,p}$, $e_{\Gamma/\Gamma'}I_{L,p}$ and $e_{\Gamma/\Gamma'}\Cl_L[p^\infty]$ are all cohomologically trivial as $\Gamma$-modules. So the identity
 \[\big(e_{\Gamma/\Gamma'}\Cl_L[p^\infty]\big)^{\Gamma'}/\Nm_{L|K}e_{\Gamma/\Gamma'}\Cl_L[p^\infty]=\hat{H}^0(\Gamma',e_{\Gamma/\Gamma'}\Cl_L[p^\infty])=1\]
holds. This immediately implies that if $[x]\in (e_{\Gamma/\Gamma'}\Cl_L[p^\infty])^{\Gamma'}$, then $[x]$ is represented by an ideal coming from $K$, and $i_*:\Cl_{K|K_0}[p^\infty]\to (e_{\Gamma/\Gamma'}\Cl_L[p^\infty])^{\Gamma'}$ is surjective. Similarly, by 
 \[\hat{H}^0(\Gamma',e_{\Gamma/\Gamma'}I_{L,p})=1,\text{ and }\hat{H}^0(\Gamma',e_{\Gamma/\Gamma'}P_{L,p})=1 \]
we know that
 \[(e_{\Gamma/\Gamma'}I_{L,p})^{\Gamma'}=I_{K,p}\cap e_{\Gamma/\Gamma'}I_{L,p},\text{ and }(e_{\Gamma/\Gamma'}P_{L,p})^{\Gamma'}=P_{K,p}\cap e_{\Gamma/\Gamma'}P_{L,p}.\]
Also by $\hat{H}^1(\Gamma',e_{\Gamma/\Gamma'}P_{L,p})=1$, we have the short exact sequence
 \[1\to (e_{\Gamma/\Gamma'}P_{L,p})^{\Gamma'}\to (e_{\Gamma/\Gamma'}I_{L,p})^{\Gamma'}\to (e_{\Gamma/\Gamma'}\Cl_{L}[p^\infty])^{\Gamma'}\to1.\]
Then these identities together with the short exact sequence (\ref{eqn: 2}) gives the following commutative diagram which concludes the proof:
 \[\begin{tikzcd}
 &1\arrow{r}&P_{K,p}\cap e_{\Gamma/\Gamma'}P_{L,p}\arrow{r}\arrow[d, equal]&I_{K,p}\cap e_{\Gamma/\Gamma'}I_{L,p}\arrow{r}\arrow[d, equal]&\Cl_{K|K_0}[p^\infty]\arrow{r}\arrow{d}{i_*}&1\\
 &1\arrow{r}&(e_{\Gamma/\Gamma'}P_{L,p})^{\Gamma'}\arrow{r}&(e_{\Gamma/\Gamma'}I_{L,p})^{\Gamma'}\arrow{r}&(e_{\Gamma/\Gamma'}\Cl_{L}[p^\infty])^{\Gamma'}\arrow{r}&1.
 \end{tikzcd}\]
\end{proof}

\section{Reinterpretation of the Cohen-Martinet Heuristics in the non-Galois case}\label{S:Re}

In this section, we reinterpret the distribution on abelian groups from \eqref{eqn: probability of non-Galois fields in section 6} that we have shown are predicted by the Cohen-Martinet heuristics to be the distribution of class groups of non-Galois fields.  Returning to the principle that objects should appears inversely as often as their number of automorphisms, we will see that these class groups of non-Galois fields have certain structure and the distribution is given as inverse to the number of automorphisms of that structure.  We end the sections with several examples for different groups $\Gamma$.

We first define some notation used in this section. Let $\Gamma'$ be a fixed subgroup of $\Gamma$. We've defined the trivial idempotent $e_1$ in Section~\ref{S: notations for the Heuristics}, the augmentation character $a_{\Gamma/\Gamma'}$ and the central idempotent $e_{\Gamma/\Gamma'}$ of $\Q[\Gamma]$ associated to it in Section~\ref{S:moments}.
 Let $e_{r_{\Gamma/\Gamma'}}=e_1+e_{\Gamma/\Gamma'}$ be the central idempotent associated to the character $r_{\Gamma/\Gamma'}$, and $e_1'$ be the irreducible central idempotent associated to the unit character $1_{\Gamma'}$ of $\Gamma'$ in $\Q[\Gamma']$. Note that $e_1'$ is naturally an idempotent in $\Q[\Gamma]$ via the embedding $\Gamma'\hookrightarrow\Gamma$, but it is not necessarily central. Throughout this section, let $S$ be a fixed finite set of \emph{good primes} for $e_{\Gamma/\Gamma'}$ (see definition in Section~\ref{S:NG}), and $\fO\subseteq\Q[\Gamma]$ be a maximal $\Z_S$-order containing the group ring $\Z_S[\Gamma]$. By our assumption, $e_{\Gamma/\Gamma'}\fO$ is exactly $e_{\Gamma/\Gamma'}\Z_S[\Gamma]$.  

\begin{definition} For any $(\Gamma,\Gamma)$-bimodule $M$ and any subgroup $\Lambda$ of $\Gamma$, let ${}^{\Lambda}M$ be the subgroup of $M$ fixed by the action of $\Lambda$ on the left. Similarly $M^{\Lambda}$ is the subgroup fixed by the action of $\Lambda$ on the right. \end{definition}

\textbf{Caution}: The notation $M^{\Lambda}$ is \emph{different} from the use in previous sections, as before we only considered left actions. 
The reason for these two notations is that objects like $\fO$ are $(\Gamma,\Gamma)$-bimodules and we have to distinguish left and right $\Gamma'$-invariant parts.

\subsection{Integral model for the Hecke algebra and Morita equivalence}
First of all, $\Q[\Gamma]$ is a $(\Gamma,\Gamma)$-bimodule, we can consider the subspace ${}^{\Gamma'}\Q[\Gamma]^{\Gamma'}$, which is also called the Hecke algebra, written as $\Q[\Gamma'\backslash \Gamma/\Gamma']$, and which we will write as $e_1'\Q[\Gamma]e_1'$. Note that $e_1'\Q[\Gamma]e_1'$ is a $\Q$-algebra, but its identity $e_1'$ is not the identity of $\Q[\Gamma]$. If $V$ is any left $\Q[\Gamma]$-module, then ${}^{\Gamma'}V$ is naturally a left $e_1'\Q[\Gamma]e_1'$-module. Let $e_1'xe_1'\in e_1'\Q[\Gamma]e_1'$ and $v\in {}^{\Gamma'}V$, then for all $\tau\in\Gamma'$, we have
 \[\tau\cdot(e_1'xe_1'\cdot v)=(\tau e_1'xe_1')\cdot v=e_1'xe_1'\cdot v.\]
This shows that $e_1'xe_1'v$ is still fixed by $\Gamma'$, hence $e_1'xe_1'\cdot {}^{\Gamma'}V\subseteq {}^{\Gamma'}V$. Also for a left $\Q[\Gamma]$-module $V$, we always have
 \[{}^{\Gamma'}V={}^{\Gamma'}(e_{r_{\Gamma/\Gamma'}}V).\]
 
So we see that for $\Q[\Gamma]$-module $V$, the invariants  ${}^{\Gamma'}V$ are naturally a $e_1'\Q[\Gamma]e_1'$-module.   Our goal is  now to construct an integral version of this kind of structure.  Given a finite $\fO$-module $G$, one has a natural action of 
$\mathcal{P}:={}^{\Gamma'}\fO^{\Gamma'}=\fO\cap e_1'\Q[\Gamma]e_1'$ on ${}^{\Gamma'}G$ by reasoning as above.  However, 
in general $\mathcal{P}$ is not even a ring, because if $S$ contains any primes dividing $|\Gamma'|$, then 
$\mathcal{P}$ does not contain a multiplicative identity.  Even if $S$ does not contain any primes dividing $|\Gamma'|$, it is not clear what kind of ring $\mathcal{P}$ is.  We will construct
a ring $\fo$, agreeing with $\mathcal{P}$ when $S$ does not contain primes dividing $|\Gamma|$ and larger than $\mathcal{P}$ otherwise, and show that this larger ring $\fo$ still acts on ${}^{\Gamma'}G$.
After proving several results, in Corollary~\ref{cor: o is a maximal order}, we will see that $\fo$ is actually a maximal order.


\begin{definition} We define
 \[\fo={}^{\Gamma'}(e_{\Gamma/\Gamma'}\fO e_1').\]\end{definition}
 
 We include the factor $e_{\Gamma/\Gamma'}$ because of our intended application to (relative) class groups.  When $\Gamma=S_n$ and $\Gamma'=S_{n-1}$ is the stabilizer of an element, then we have 
 $e_{\Gamma/\Gamma'}\fO =M_{n-1}(\Z_S)$ and $\fo =\Z_S$ (see Example~\ref{Ex:Sn}).  When $\Gamma=D_4$ and $\Gamma'$ is a non-central order $2$ subgroup, we have  $e_{\Gamma/\Gamma'}\fO =\Z_S \times M_{2}(\Z_S)$, and $\fo=\Z_S^2$ (see Example~\ref{Ex:D4}). When $\Gamma=A_5$, we let $\Gamma$ act on $\{1,2,3,4,5\}$ in the usual way and let $\Gamma'$ be the subgroup fixing $1$. Then $e_{\Gamma/\Gamma'}\fO =M_4(\Z_S)$ and $\fo=\Z_S$. As suggested by these examples, we will show in general that 
 $e_{\Gamma/\Gamma'}\fO$ and $\fo$ are Morita equivalent in Theorem~\ref{theorem: equivalence of categories}, even though in general in  they can have more complicated structures as arbitrary maximal orders in sums of matrix algebras over division algebras.  This Morita equivalence will play a central role in our reinterpretation of the prediction of the Cohen-Martinet heuristics in the non-Galois case
 
 We start by showing that $\fo$ is an order of the semisimple $\Q$-algebra $e_1'e_{\Gamma/\Gamma'}\Q[\Gamma]e_1'$.

\begin{proposition}\label{Prop:7.1} Let $e_1,\dots,e_m$ be the distinct irreducible central idempotents of $\Q[\Gamma]$ and $e_{\Gamma/\Gamma'}=e_2+\cdots+e_k$. The $\Q$-algebra $e_1'\Q[\Gamma]e_1'={}^{\Gamma'}\Q[\Gamma]^{\Gamma'}$ is a semisimple $\Q$-algebra whose decomposition into simple components is given by
 \[e_1'\Q[\Gamma]e_1'=\prod_{i=1}^ke_1'e_i\Q[\Gamma]e_1'.\]
The category of $e_1'\Q[\Gamma]e_1'$-modules is equivalent to the category of $e_{r_{\Gamma/\Gamma'}}\Q[\Gamma]$-modules. The subgroup ${}^{\Gamma'}(\fO e_1')$ is a $\Z_S$-order of $e_1'\Q[\Gamma]e_1'$, and $\fo$ is a $\Z_S$-order of $e_1'e_{\Gamma/\Gamma'}\Q[\Gamma]e_1'$.
\end{proposition}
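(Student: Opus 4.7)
The plan is to handle the four claims of the proposition---semisimplicity of $e_1'\Q[\Gamma]e_1'$ with the stated decomposition, the categorical equivalence with $e_{r_{\Gamma/\Gamma'}}\Q[\Gamma]$-modules, and that each of ${}^{\Gamma'}(\fO e_1')$ and $\fo$ is a $\Z_S$-order---in that order. The first two are structural, while the last two require a direct verification that becomes short once a key identity from left-$\Gamma'$-invariance is noted.

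For the first claim, I would begin from the central simple decomposition $\Q[\Gamma]=\bigoplus_{i=1}^m e_i\Q[\Gamma]$, from which
\begin{equation*}
e_1'\Q[\Gamma]e_1'=\bigoplus_{i=1}^m e_1'e_i\Q[\Gamma]e_1',
\end{equation*}
since each $e_i$ is central. Lemma \ref{lem: Frobenius reciprocity} identifies exactly $i=1,\dots,k$ as the indices for which $e_1'e_i\neq 0$, and each surviving factor is the corner $\widetilde{e}A_i\widetilde{e}$ of the simple algebra $A_i=e_i\Q[\Gamma]$ by the nonzero idempotent $\widetilde{e}=e_1'e_i$, hence is itself simple. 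For the categorical equivalence, I would verify that $e_1'$ is a full idempotent in $R:=e_{r_{\Gamma/\Gamma'}}\Q[\Gamma]=\bigoplus_{i=1}^k e_i\Q[\Gamma]$, i.e.\ that $Re_1'R=R$, by checking this in each simple factor (a nonzero idempotent in a simple algebra generates the whole ring as a two-sided ideal). Standard Morita theory then produces the desired equivalence via $M\mapsto e_1'M$, noting that $e_1'Re_1'=e_1'\Q[\Gamma]e_1'$.

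For the two order claims, I would verify directly in each case that the set is a subring of the indicated $\Q$-algebra, containing its unit, finitely generated as a $\Z_S$-module, and spanning the algebra over $\Q$. Any element has the shape $z=be_1'$ with $b\in\fO$ (respectively $b\in e_{\Gamma/\Gamma'}\fO$, which is closed under multiplication by centrality of $e_{\Gamma/\Gamma'}$), and left-$\Gamma'$-invariance of $z$ is equivalent to the identity $e_1'be_1'=be_1'$. The unit $e_1'$ (respectively $e_1'e_{\Gamma/\Gamma'}$) lies in the set since $e_{\Gamma/\Gamma'}\in\fO$ by our assumption that every prime in $S$ is good for $e_{\Gamma/\Gamma'}$. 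For closure under multiplication, given $z_1=b_1e_1'$ and $z_2=b_2e_1'$ in the set, I would apply the identity $e_1'b_2e_1'=b_2e_1'$ coming from left-invariance of $z_2$ to obtain
\begin{equation*}
z_1z_2=b_1\cdot(e_1'b_2e_1')=b_1b_2\cdot e_1',
\end{equation*}
which lies in the appropriate ambient set since $b_1b_2$ does, and is automatically left-$\Gamma'$-invariant. Finite generation over $\Z_S$ follows from the inclusion $\fO e_1'\subseteq|\Gamma'|^{-1}\fO$ (since $|\Gamma'|e_1'=\sum_{\tau\in\Gamma'}\tau\in\fO$) together with Noetherianness of $\Z_S$, and the $\Q$-span is the full algebra because $\Gamma'$-invariants commute with $\otimes_{\Z_S}\Q$ (one can average over $\Gamma'$ once $|\Gamma'|$ is invertible).

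The only genuine subtlety is that when $S$ contains primes dividing $|\Gamma'|$, the idempotent $e_1'$ itself does not lie in $\fO$, so one cannot a priori manipulate it freely inside the maximal order; in particular $e_1'b_2$ alone may fail to be in $\fO$. The computation above sidesteps this entirely: the central $e_1'$ that would otherwise contribute a $|\Gamma'|^{-1}$ denominator is absorbed via left-$\Gamma'$-invariance of $z_2$, and the product is expressed as $b_1b_2\cdot e_1'$ with no denominators beyond the one already intrinsic to $e_1'$. This is the one step of the proof that I expect to require any care.
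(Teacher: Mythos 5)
Your proof is correct and follows the same overall plan as the paper's: decompose along the central idempotents, use Frobenius reciprocity (Lemma \ref{lem: Frobenius reciprocity}) to identify which factors survive, observe that each surviving factor is a corner of a simple algebra, and then verify the order axioms directly using the identity $e_1'be_1'=be_1'$ that falls out of left-$\Gamma'$-invariance. The one place where you take a genuinely different route is the Morita equivalence: you argue that $e_1'$ is a full idempotent in $R=e_{r_{\Gamma/\Gamma'}}\Q[\Gamma]$ (i.e.\ $Re_1'R=R$, checked factor-by-factor since a nonzero idempotent in a simple ring generates it as a two-sided ideal) and then invoke the standard $eRe$-Morita theorem, whereas the paper instead computes that $e_ie_1'\Q[\Gamma]e_1'e_i\cong M_{l_i'}(D_i)$ and notes that two matrix algebras over the same division algebra are Morita equivalent. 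Both are textbook, and your version has the small advantage of not requiring explicit identification of the simple factors; the paper's version yields as a byproduct the concrete matrix description of $e_1'e_i\Q[\Gamma]e_1'$ that is then reused elsewhere (e.g.\ in the example computations of $\fo$). Your treatment of the order axioms, including the key observation that the apparent denominator from $e_1'$ is absorbed by left-invariance, matches the paper's calculation exactly; the finite-generation and $\Q$-span arguments are phrased a little differently (yours via $\fO e_1'\subseteq|\Gamma'|^{-1}\fO$ and invariants commuting with $\otimes\Q$, the paper's via explicit generators and clearing denominators by $|\Gamma'|^2$) but are equivalent. One small inaccuracy: the containment $e_{\Gamma/\Gamma'}\in\fO$ holds because any maximal order contains all central idempotents of the ambient semisimple algebra, not because $S$ consists of good primes; the good-prime hypothesis is what makes $e_{\Gamma/\Gamma'}\fO=e_{\Gamma/\Gamma'}\Z_S[\Gamma]$, which is not needed for this proposition.
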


\begin{proof} In the proof, let $A=\Q[\Gamma]$ and $A'=e_1'e_{\Gamma/\Gamma'}\Q[\Gamma]e_1'$.
Note that $e_1'\Q[\Gamma]e_1e_1'=e_1A$ and ${}^{\Gamma'}\big((1-e_{\Gamma/\Gamma'})\fO e_1'\big)={}^\Gamma\fO$, c.f. Lemma \ref{lem: Frobenius reciprocity}. We can focus on $e_{\Gamma/\Gamma'}A$, $A'$ and $\fo$ (the ``nontrivial parts'') in the rest of the proof.

The irreducible central idempotents of $A$ give a decomposition of $A'$
 \[A'=e_2e_1'A'\times\cdots\times e_me_1'A',\]
with each component a $\Q$-algebra because $e_ie_1'$ is central in $A'$. Note that $e_1'\cdot e_i\neq0$ if and only if $e_i=e_1$ or $e_i\cdot e_{\Gamma/\Gamma'}\neq0$ by Lemma \ref{lem: Frobenius reciprocity}. So we have
 \[A'=e_2e_1'A'\times\cdots\times e_ke_1'A'.\]
For any simple $\Q$-algebra $B\cong M_l(D)$ where $D$ is an division algebra and any idempotent $f\in B$, we have $fBf\cong M_{l'}(D)$ for some $l'\leq l$. This can be shown using the decomposition of the identity into mutually orthogonal primitive idempotents by the Krull-Schmidt-Azumaya Theorem, see e.g. \cite[6.12]{RC90methods}.

We apply this result to $e_ie_1'$ for each $i=2,\dots,k$ as follows. The $\Q$-algebra $e_iA$ is simple, and $e_ie_1'$ is an idempotent in $e_iA$. Therefore if $e_iA\cong M_{l_i}(D_i)$ where $D_i$ is some division algebra, then there exists some integer $0<l_i'<l$, such that $e_ie_1'A'=e_ie_1'Ae_ie_1'\cong M_{l_i'}(D_i)$, hence $e_1'e_iA'$ is a simple $\Q$-algebra for all $i=1,\dots,k$. Since $A'$ is the direct sum of finitely many simple $\Q$-algebras, it is a semisimple $\Q$-algebra. 

The equivalence of the category of $e_1'e_iA'$-modules and the category of $e_iA$-modules follows from the fact that they are both matrix algebras over $D_i$, hence $A'$ is Morita equivalent to $e_{\Gamma/\Gamma'}A$. Finally by $e_1'e_1\Q[\Gamma]e_1'=e_1A\cong\Q$, the statements on $e_1'\Q[\Gamma]e_1'$ are all proved.

We now check that $\fo$ is indeed a subring of $A'$. By definition, $\fo$, as the $\Gamma'$-invariant part of an $\Gamma$-module, is an additive abelian group. For all $x,y\in e_{\Gamma/\Gamma'}\fO$ such that $xe'_1,ye_1'\in\fo$, since $\sigma xe_1'=xe_1'$ for all $\sigma\in\Gamma'$, we know that $e_1'xe_1'=xe_1'$, i.e., $xe_1'\in A'$ and $\fo\subseteq A'$ is an additive subgroup. For $xe'_1,ye_1'\in\fo$, we have $xe_1'ye_1'=x(e_1'ye_1')=xye_1'$, which is still an element in $\fo$ because $xy\in e_{\Gamma/\Gamma'}\fO$ and $(xe'_1)ye_1'$ is fixed by $\Gamma'$ on the left.
In particular, $e_1'e_{\Gamma/\Gamma'}$ is contained in $\fo$ and is the identity for $A'$, hence $\fo$ is indeed a subring of $A'$. 

Then let's show that $\fo$ is a $\Z_S$-order in $A'$. We've already showed that $\fo$ is a subring of $A'$. Then we check that $\Q\otimes_{\Z_S}\fo=A'$. Let $x\in e_{\Gamma/\Gamma'}A$, then we can write it as $x=\dfrac{1}{n}y$ with some $n\in\Z$ and $y\in|\Gamma'|^2 e_{\Gamma/\Gamma'}\fO$ because $\Q\otimes e_{\Gamma/\Gamma'}\fO=e_{\Gamma/\Gamma'}A$. Therefore 
 \[e_1'xe_1'=\dfrac{1}{n}\otimes e_1'ye_1'\]
where $e_1'ye_1'\in{}^{\Gamma'}(e_{\Gamma/\Gamma'}\fO)^{\Gamma'}\subseteq\fo$ by our construction.  This shows that $\Q\otimes\fo=A'$. 

Finally we show that $\fo$ is finitely generated as a $\Z_S$-module. Since $e_{\Gamma/\Gamma'}\fO$ is finitely generated as a $\Z_S$-module, say $e_{\Gamma/\Gamma'}\fO=\Z_S\cdot x_1+\cdots+\Z_S\cdot x_N$, then 
 \[\fo\subseteq\fO e_1'=\Z_S\cdot x_1e_1'+\cdots+\Z_S\cdot x_Ne_1',\]
is a submodule of a finitely generated $\Z_S$-module, hence itself finitely generated over $\Z_S$.
\end{proof}

Now we will show that the $\Gamma'$-invariant part of an $e_{\Gamma/\Gamma'}\fO$-module is naturally an $\fo$-module.

\begin{lemma}\label{lemma: Gamma'-invariant part is an o-module} For any finitely generated $e_{\Gamma/\Gamma'}\fO$-module $G$, its $\Gamma'$-invariant part ${}^{\Gamma'}G$ is an $\fo$-module via the action 
 \[(\sigma e_1')\cdot g:=\sigma\cdot g,\]
where the right-hand side is the action of $e_{\Gamma/\Gamma'}\fO$ on $G$, for all $g\in{}^{\Gamma'}G$ and $\sigma e_1'\in\fo$ with $\sigma\in e_{\Gamma/\Gamma'}\fO$.
\end{lemma}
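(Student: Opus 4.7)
The plan centers on exploiting cohomological triviality (Lemma~\ref{lemma: cohomological triviality}): since $G$ is an $e_{\Gamma/\Gamma'}\fO$-module and $e_{\Gamma/\Gamma'}\fO$ is a maximal order, $G$ is cohomologically trivial as a $\Gamma'$-module, so $\hat{H}^0(\Gamma',G)=0$ yields ${}^{\Gamma'}G = \Nm_{\Gamma'}G$. In other words, every $g \in {}^{\Gamma'}G$ can be written as $g = \Nm_{\Gamma'}(h)$ for some $h \in G$, where $\Nm_{\Gamma'} = \sum_{\tau\in\Gamma'}\tau$. This is the bridge that lets one cancel the denominator $|\Gamma'|$ inherent in $e_1' = \tfrac{1}{|\Gamma'|}\Nm_{\Gamma'}$: any identity in $\Q[\Gamma]$ of the form $Xe_1' = Ye_1'$ can, after multiplying by $|\Gamma'|$, be rewritten as $X\Nm_{\Gamma'}=Y\Nm_{\Gamma'}$ in $\fO$, and then evaluated on $h$ to yield an identity in $G$.

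With that tool in hand, the verification proceeds straightforwardly. For well-definedness, suppose $\sigma e_1' = \sigma' e_1'$ with $\sigma, \sigma' \in e_{\Gamma/\Gamma'}\fO$; multiplying by $|\Gamma'|$ gives $\sigma\Nm_{\Gamma'} = \sigma'\Nm_{\Gamma'}$ in $\fO$, whence $\sigma g = \sigma\Nm_{\Gamma'}h = \sigma'\Nm_{\Gamma'}h = \sigma' g$. To see $\sigma g \in {}^{\Gamma'}G$, one uses that $\sigma e_1' \in \fo$ is left $\Gamma'$-invariant by definition, so $(\tau\sigma)e_1' = \sigma e_1'$ for every $\tau \in \Gamma'$, and well-definedness then forces $\tau(\sigma g) = (\tau\sigma) g = \sigma g$. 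For compatibility with multiplication, the same left invariance together with averaging gives $e_1' \sigma_2 e_1' = \sigma_2 e_1'$, so whenever $\sigma_3 e_1' = (\sigma_1 e_1')(\sigma_2 e_1') \in \fo$ one has $\sigma_3 e_1' = \sigma_1\sigma_2 e_1'$; well-definedness again yields $\sigma_3 g = \sigma_1 \sigma_2 g = \sigma_1 (\sigma_2 g)$. The identity $e_{\Gamma/\Gamma'} e_1'$ of $\fo$ acts as $e_{\Gamma/\Gamma'}$, which is the identity on $G$, and additivity is immediate from the additive structures of $\fo$ and $G$.

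The main obstacle is the non-invertibility of $|\Gamma'|$ in $\fO$: Lemma~\ref{lemma: good primes for augmentation character} only forces $p \nmid |\Gamma/\Gamma'|$ for $p \in S$, so $S$ may well contain primes dividing $|\Gamma'|$, and without such invertibility it is not clear a priori that $\sigma g$ should depend only on the class $\sigma e_1' \in \fo$. Each step above would be a triviality for semisimple algebras if $|\Gamma'|$ were invertible; cohomological triviality is exactly what supplies the needed integral lift $g = \Nm_{\Gamma'} h$, allowing the rational identities in $\Q[\Gamma]$ that govern the ring structure of $\fo$ to be converted into integral identities in $\fO$ that determine the action on $G$.
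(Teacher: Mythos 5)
Your proof is correct, and it reaches the crucial well-definedness fact by a genuinely different route than the paper. The paper argues directly with coefficients: since $S$ consists of good primes for $e_{\Gamma/\Gamma'}$, one has $e_{\Gamma/\Gamma'}\fO=e_{\Gamma/\Gamma'}\Z_S[\Gamma]$, so every $\sigma\in e_{\Gamma/\Gamma'}\fO$ has $\Z_S$-coefficients; the identity $\sigma e_1'=\tau e_1'$ says exactly that the sums of those coefficients over each left $\Gamma'$-coset agree, and because $g\in{}^{\Gamma'}G$ means $\gamma g$ depends only on the coset $\gamma\Gamma'$, the value $\sigma g$ is determined by these coset sums alone. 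You instead invoke cohomological triviality (Lemma~\ref{lemma: cohomological triviality}) to lift $g=\Nm_{\Gamma'}h$, clear the $|\Gamma'|$-denominator in $\Q[\Gamma]$ to get $\sigma\Nm_{\Gamma'}=\tau\Nm_{\Gamma'}$ as an identity inside $\fO$, and evaluate on $h$. Both are sound: the paper's argument is more elementary (no Tate cohomology needed) and more visibly tied to the fact that $e_{\Gamma/\Gamma'}\fO$ sits inside the group ring, while yours is more structural — it would transfer to any situation where the module is cohomologically trivial, regardless of how the maximal order is presented. The remaining verifications in your write-up — that $\sigma g$ lands in ${}^{\Gamma'}G$ via $(\tau\sigma)e_1'=\sigma e_1'$, multiplicativity via $e_1'\sigma_2 e_1'=\sigma_2 e_1'$, the identity element, and additivity — coincide with the paper's, and you correctly flag that the whole point is the possible non-invertibility of $|\Gamma'|$ in $\Z_S$, which is what keeps $e_1'$ itself out of $\fO$.
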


\begin{remark} As the identity of $\fo$, the element $e_{\Gamma/\Gamma'}e_1'$ acts as identity on ${}^{\Gamma'}G$ for any $e_{\Gamma/\Gamma'}\fO$-module $G$ despite the fact that $e_{\Gamma/\Gamma'}e_1'$ is not even contained $e_{\Gamma/\Gamma'}\fO$ in general.
\end{remark}
\begin{remark}
 We can immediately see from Theorem \ref{thm: Gamma'-invariant} that $\Cl^S_{K|K_0}$ is naturally an $\fo$-module. This will be the key to our interpretation of (\ref{eqn: probability of non-Galois fields in section 6}).\end{remark}

\begin{proof} If $\sigma e_1'=\tau e_1'$ with $\sigma,\tau\in e_{\Gamma/\Gamma'}\fO$, then the sum of the coefficients of elements in the same left coset of $\Gamma'$ must be the same, hence $\sigma\cdot g=\tau\cdot g$ for all $g\in{}^{\Gamma'}G$. This shows that the definition does not depend on the choice of $\sigma\in e_{\Gamma/\Gamma'}\fO$. Moreover, since $\sigma e_1'$ is fixed by $\Gamma'$ on the left, we know that $\sigma e_1'g\in{}^{\Gamma'}G$. So we've shown that $\sigma e_1'\cdot g=\sigma g$ gives a well-defined map.

Note that $e_1'\cdot\sigma e_1'=\sigma e_1'$ for all $\sigma e_1'\in\fo$ by definition. If $\sigma_1e_1',\sigma_2e_1'\in\fo$ with $\sigma_1,\sigma_2\in e_{\Gamma/\Gamma'}\fO$, then $\sigma_1e_1'\sigma_2e_1'=\sigma_1\sigma_2e_1'$ which shows that the action is associative. Finally, $\sigma_1e_1'g+\sigma_2e_1'g=(\sigma_1+\sigma_2)g=(\sigma_1+\sigma_2)e_1'g=(\sigma_1e_1'+\sigma_2e_1')g$. So this definition  turns ${}^{\Gamma'}G$ into an $\fo$-module.
\end{proof}

We then prove the equivalence of the category of $e_{\Gamma/\Gamma'}\fO$-modules and the category of $\fo$-modules in the rest of this subsection.

\begin{lemma}\label{lemma:the functor gives Gamma' invariant part when e_1' not contained in O} Given a finitely generated left $e_{\Gamma/\Gamma'}\fO$-module $G$, the left $\fo$-module ${}^{\Gamma'}(e_{\Gamma/\Gamma'}\fO)\otimes_{e_{\Gamma/\Gamma'}\fO}G$ is isomorphic to ${}^{\Gamma'}G$ as $\fo$-modules.
\end{lemma}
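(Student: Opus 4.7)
The plan is to construct an evident evaluation map and prove it is an isomorphism by the standard ``verify on free modules, propagate via a finite presentation and exactness'' argument, the exactness coming from cohomological triviality. Throughout let $R := e_{\Gamma/\Gamma'}\fO$ and $P := {}^{\Gamma'}R$; by Lemma~\ref{lemma: Gamma'-invariant part is an o-module} applied with $R$ itself as the module, $P$ carries a natural $(\fo,R)$-bimodule structure, and the tensor product $P\otimes_R G$ is thereby a left $\fo$-module.

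First I will define
\[ \varphi_G : P \otimes_R G \longrightarrow {}^{\Gamma'}G, \qquad y\otimes g \longmapsto y\cdot g, \]
and record three routine checks: the image lies in ${}^{\Gamma'}G$ because $\tau(yg)=(\tau y)g = yg$ for all $\tau\in\Gamma'$; the map is $R$-balanced by associativity of the $R$-action; and it is $\fo$-linear by directly unwinding the $\fo$-actions on both sides supplied by Lemma~\ref{lemma: Gamma'-invariant part is an o-module}. When $G = R$, the map $\varphi_R$ is the canonical identification $P\otimes_R R \cong P = {}^{\Gamma'}R$, hence an isomorphism, and by additivity $\varphi_{R^n}$ is an isomorphism for every $n$.

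For general finitely generated $G$, I will use that $R = e_{\Gamma/\Gamma'}\Z_S[\Gamma]$ is a module-finite $\Z_S$-algebra, hence a Noetherian ring, so $G$ admits a finite presentation $R^m \to R^n \to G \to 0$. Applying $P\otimes_R(-)$ and ${}^{\Gamma'}(-)$ gives a commutative diagram
\[
\begin{array}{ccccccc}
P \otimes_R R^m & \longrightarrow & P\otimes_R R^n & \longrightarrow & P\otimes_R G & \longrightarrow & 0 \\
\big\downarrow \varphi_{R^m} & & \big\downarrow \varphi_{R^n} & & \big\downarrow \varphi_G & & \\
{}^{\Gamma'}R^m & \longrightarrow & {}^{\Gamma'}R^n & \longrightarrow & {}^{\Gamma'}G & \longrightarrow & 0
\end{array}
\]
whose top row is exact because tensor is right exact. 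The two left verticals are isomorphisms by the previous paragraph, so once I verify exactness of the bottom row a short diagram chase forces $\varphi_G$ to be an isomorphism.

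The one step that is not purely formal is the right exactness of ${}^{\Gamma'}(-)$ on the category of $R$-modules, and this is where Lemma~\ref{lemma: cohomological triviality} is essential: every $R$-module is $\Gamma'$-cohomologically trivial, so $\hat H^1(\Gamma',\ker)=0$ for the kernel of the surjection $R^n\twoheadrightarrow G$, which makes the long exact cohomology sequence terminate at ${}^{\Gamma'}G$ and gives exactness of the bottom row. I expect this to be the main subtlety in the write-up, since without cohomological triviality the functor ${}^{\Gamma'}(-)$ is only left exact; everything else is bookkeeping once the bimodule structure on $P$ from Lemma~\ref{lemma: Gamma'-invariant part is an o-module} is set up correctly.
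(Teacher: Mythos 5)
Your proof is correct, and it takes a genuinely different route from the paper's. The paper proves the lemma by reducing to a single irreducible component $e$, passing to $p$-adic completions, identifying $e\widehat{\fO}$ with a matrix algebra $M_l(\so)$ over a local division-algebra order, reducing to indecomposable $G$, and producing an explicit surjection from $e\fO$ onto $G$ to chase an element through; cohomological triviality enters only at the final step of that local surjectivity argument. Your argument instead runs the standard homological template: define the evaluation map $\varphi_G$, observe it is an isomorphism for $G=R^n$ by additivity of both functors and the canonical $P\otimes_R R\cong P$, and propagate to arbitrary finitely generated $G$ via a finite presentation and right exactness of both $P\otimes_R(-)$ and ${}^{\Gamma'}(-)$ on $R$-modules. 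You correctly isolate cohomological triviality (Lemma~\ref{lemma: cohomological triviality}) as the one non-formal ingredient---it is precisely what makes ${}^{\Gamma'}(-)$ right exact here---and this use of it is both more conceptual and more transparent than the paper's localized element chase. It also gives injectivity and surjectivity in one stroke, whereas the paper handles injectivity by an assertion (``$\psi$ is injective because $\varphi$ is'') that is less obviously justified than your diagram chase.

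One small point worth tightening in a final write-up: for the bottom row of your diagram to be exact (not merely for ${}^{\Gamma'}R^n\to{}^{\Gamma'}G$ to be surjective), you should note that ${}^{\Gamma'}R^m$ surjects onto ${}^{\Gamma'}K$ where $K=\ker(R^n\twoheadrightarrow G)$, which follows by the same reasoning since the kernel of $R^m\twoheadrightarrow K$ is again an $R$-module and hence cohomologically trivial. Splicing that short exact sequence with $0\to{}^{\Gamma'}K\to{}^{\Gamma'}R^n\to{}^{\Gamma'}G\to0$ gives the full exactness you invoke in the diagram chase. This is implicit in ``makes the long exact cohomology sequence terminate,'' but it is the kind of detail a referee would ask you to spell out.
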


\begin{proof} It suffices to prove this for each component of $G$, for $eG$ is a left $\Gamma$-module via the composition $\Z_S[\Gamma]\to\fO\to e\fO$ for each irreducible central idempotent $e$ of $e_{\Gamma/\Gamma'}\Q[\Gamma]$. We then fix $e$ and assume $eG=G$. There is a natural $ e\fO$-isomorphism $\varphi: e\fO\otimes_{e\fO}G\stackrel{\sim}{\to}G$ given by $\sigma\otimes g=\sigma\cdot g$. Note that $\sigma g\in{}^{\Gamma'}G$ for all $\sigma\in{}^{\Gamma'}(e\fO)$. We then obtain an $ee_1'\fo$-morphism $\psi:{}^{\Gamma'}(e\fO)\otimes_{e\fO}G\to{}^{\Gamma'}G$ by restricting $\varphi$ on the subgroup ${}^{\Gamma'}(e\fO)\otimes_{e\fO}G$. Because for all $\tau e_1'\in ee_1'\fo$ where $\tau\in e\fO$ we have
 \[\tau e_1'\varphi(\sigma\otimes g)=\tau e_1'(\sigma g)=\tau\sigma g\quad\quad \varphi\big(\tau e_1'(\sigma\otimes g)\big)=\varphi(\tau\sigma\otimes g)=\tau\sigma g.\]

Claim: $\psi:{}^{\Gamma'}(e\fO)\otimes_{e\fO}G\to{}^{\Gamma'}G$ is an $ee_1'\fo$-isomorphism. 

The morphism $\psi$ is injective because $\varphi$ is. For the proof of surjectivity, we first recall that a morphism $f:H_1\to H_2$ of abelian groups is surjective if and only if $f_p:H_{1,p}\to H_{2,p}$ is surjective for all prime $p$ where $f_p$ and $H_{i,p}$ denote the localization at $p$. In addition, $f_p$ is surjective if and only if $\hat{f}_p:\widehat{H}_{1,p}\to\widehat{H}_{2,p}$ is surjective where $\hat{f}_p$ and $\widehat{H}_{i,p}$ denote the completion at $p$. 

Since $\widehat{\fO}:=\fO\otimes_{\Z_S}\Z_p$ is a maximal $\Z_p$-order in $\Q_p[\Gamma]$ (see \cite[11.6]{R03maximal}) and $\widehat{\fo}:=\fo\otimes_{\Z_S}\Z_p$ is the same as ${}^{\Gamma'}(e_{\Gamma/\Gamma'}\widehat{\fO}e_1')$, the results above go through similarly. In particular, the additive subgroup ${}^{\Gamma'}(e_{\Gamma/\Gamma'}\widehat{\fO})$ of $e_{\Gamma/\Gamma'}\widehat{\fO}$ is a left $\widehat{\fo}$-module by the analogue of Lemma \ref{lemma: Gamma'-invariant part is an o-module}, hence an $(\widehat{\fo},e_{\Gamma/\Gamma'}\widehat{\fO})$-bimodule. So we can reduce the problem to proving 
 \[\widehat{\psi}_p:{}^{\Gamma'}(e\widehat{\fO})\otimes_{e\widehat{\fO}}\widehat{G}\to{}^{\Gamma'}\widehat{G}\]
is surjective for all $p\in S$. 

By abuse of notation, let $\fO$ be a maximal $\Z_p$-order in $\Q_p[\Gamma]$ with $p$ a good prime for $e_{\Gamma/\Gamma'}$, and let $\fo:={}^{\Gamma'}(e_{\Gamma/\Gamma'}\fO e_1')$ just like above. Let $e\Q_p[\Gamma]\cong M_l(D)$ be an isomorphism such that $e\fO\cong M_l(\so)$ where $D$ is a division algebra over $\Q_p$ and $\so\subseteq D$ is the unique maximal $\Z_p$-order in $D$ with the unique two-sided maximal ideal $\fp$, c.f. \cite[(12.8),(17.3)]{R03maximal}. Then the finitely generated $e\fO$-module $G$ admits the following matrix representation
 \[\begin{aligned}G&\cong\begin{pmatrix}\so&\cdots&\so\\
 \so&\cdots&\so\\
 \vdots&\vdots&\vdots\\
 \so&\cdots&\so
 \end{pmatrix}_{l\times m}\oplus\begin{pmatrix}
 \so/\fp^{r_1}&\so/\fp^{r_2}&\cdots&\so/\fp^{r_n}\\
 \so/\fp^{r_1}&\so/\fp^{r_2}&\cdots&\so/\fp^{r_n}\\
 \vdots&\vdots&\vdots&\vdots\\
 \so/\fp^{r_1}&\so/\fp^{r_2}&\cdots&\so/\fp^{r_n}\end{pmatrix}_{l\times n}\\
 &=\big(M_{l\times1}(\so)\big)^m\oplus M_{l\times1}(\so/\fp^{r_1})\oplus\cdots\oplus M_{l\times1}(\so/\fp^{r_n}),\end{aligned}\]
such that the action of $e\fO\cong M_l(\so)$ on $G$ is exactly the left matrix multiplication. We may therefore assume without loss of generality that $G$ is indecomposable, i.e., $G\cong M_{l\times1}(\so)$ if $G$ is projective or $G\cong M_{l\times1}(\so/\fp^r)$ with $r\geq1$ if $G$ is torsion. Let $f$ be the primitive idempotent such that
 \[f\mapsto\begin{pmatrix}1&0&\ldots&0\\
 0&0&\ldots&0\\
 \vdots&\vdots&\vdots&\vdots\\
 0&0&\ldots&0\end{pmatrix}\]
via $e\Q_p[\Gamma]\cong M_l(D)$. There exists a surjective morphism $\pi:e\fO\to G$ given by the composition of $e\fO\to\fO f$ defined by $x\mapsto xf$ and the quotient map $\so\to\so/\fp^r$. Since $e\fO$ is projective, by Lemma \ref{lemma: cohomological triviality}, the induced map ${}^{\Gamma'}(e\fO)\to{}^{\Gamma'}G$ is also surjective. For any $g\in{}^{\Gamma'}G$, there exists $\sigma e\in{}^{\Gamma'}(e\fO)$ such that $\pi(\sigma e)=g$. In particular, by definition of $\pi$, we may assume that $\sigma e=\sigma f\in\fO f$, hence
 \[\psi\big(\sigma f\otimes\pi(f)\big)=\sigma f\cdot\pi(f)=\pi(\sigma f)=g.\]
This proves the surjectivity of $\psi$, hence the lemma.
\end{proof}

\begin{lemma}\label{lemma:Morita equivalence when e_1' not contained in O} If $e$ is a central irreducible idempotent contained in $e_{\Gamma/\Gamma'}$, then the subgroup $e\fO e_1'$ of $e\Q[\Gamma]$ consisting of elements of the form $exe_1'$ with $x\in\fO$ is an $(e\fO,ee_1'\fo)$-bimodule where the right $ee_1'\fo$-action is given by right multiplication in $\Q[\Gamma]$. Then the $(e\fO,e\fO)$-bimodule homomorphism $e\fO e_1'\otimes_{ee_1'\fo}{}^{\Gamma'}(e\fO)\to e\fO$ defined by $exe_1'\otimes y\mapsto exe_1'y$, where the right-hand side is the multiplication in $\Q[\Gamma]$, is surjective.\end{lemma}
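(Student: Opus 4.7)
The plan is the following. I first verify that $M := e\fO e_1'$ is an $(e\fO, ee_1'\fo)$-bimodule: the left $e\fO$-action is multiplication in $\Q[\Gamma]$, and for the right action, writing a typical element of $ee_1'\fo$ as $e\sigma e_1'$ with $\sigma e_1' \in \fo$ (so that $e_1' \sigma e_1' = \sigma e_1'$), one has $(exe_1')(e\sigma e_1') = ex(e_1'\sigma e_1') = ex\sigma e_1' \in M$, and the remaining axioms are immediate from associativity in $\Q[\Gamma]$. I similarly check that $\phi$ is well-defined and $(e\fO, e\fO)$-bilinear, using Lemma~\ref{lemma: Gamma'-invariant part is an o-module} for the left $ee_1'\fo$-action on $N := {}^{\Gamma'}(e\fO)$ and the observation that right multiplication by $e\fO$ preserves left $\Gamma'$-invariance. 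Since $e_1' y = y$ for every $y \in N$, the image $J := \im(\phi) = e\fO \cdot N$ is a two-sided ideal of $e\fO$.

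Next, I reduce surjectivity to a local statement. Because $e\fO/J$ is a finitely generated module over the center $R$ of $e\fO$, the equality $J = e\fO$ can be verified after completing at every maximal ideal $\fp$ of $R$. At each such $\fp$ we have $(e\fO)_{\hat\fp} \cong M_{l'}(\so_\fp) = \End_{\so_\fp}(V)$ with $V = \so_\fp^{l'}$, where $\so_\fp$ is the maximal order in the local division algebra, and each $\tau \in \Gamma$ is carried to an element $[\tau] \in \Aut_{\so_\fp}(V)$. Under this identification ${}^{\Gamma'}\End_{\so_\fp}(V) = \{x : \im(x) \subseteq V^{\Gamma'}\} = \Hom_{\so_\fp}(V, V^{\Gamma'})$, and since the only two-sided ideals of the local matrix algebra $M_{l'}(\so_\fp)$ are the powers of the maximal one, it suffices to exhibit a single element of $J_{\hat\fp}$ lying outside $\fP \cdot M_{l'}(\so_\fp)$, where $\fP$ is the maximal two-sided ideal of $\so_\fp$.

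To produce such an element, I show that $V^{\Gamma'}$ contains an $\so_\fp$-primitive vector. The hypothesis $e \leq e_{\Gamma/\Gamma'}$ together with Remark~\ref{remark: central idempotent associated to a character} and Frobenius reciprocity (Lemma~\ref{lem: Frobenius reciprocity}) give $V^{\Gamma'} \neq 0$. Moreover $V^{\Gamma'}$ is saturated in $V$: if $\pi v \in V^{\Gamma'}$ for a uniformizer $\pi$ of $\so_\fp$ and some $v \in V$, then for every $\tau \in \Gamma'$ we have $\pi(\tau v - v) = \tau(\pi v) - \pi v = 0$, and torsion-freeness of $V$ forces $\tau v = v$, so $v \in V^{\Gamma'}$. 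Over $\so_\fp$, every finitely generated torsion-free module is free, so $V/V^{\Gamma'}$ is free and the inclusion $V^{\Gamma'} \hookrightarrow V$ splits; hence $V^{\Gamma'}$ has an $\so_\fp$-basis $v_1, \dots, v_r$ extendable to an $\so_\fp$-basis of $V$. Defining $x \in \Hom_{\so_\fp}(V, V^{\Gamma'})$ by $x(v_1) = v_1$ and $x(v_j) = 0$ for $j > 1$, the element $x = \mathrm{id}_V \circ x$ lies in $J_{\hat\fp}$ and its matrix in this basis is $\mathrm{diag}(1, 0, \dots, 0)$; this matrix has a unit entry and is therefore not in $\fP \cdot M_{l'}(\so_\fp)$, forcing $J_{\hat\fp} = (e\fO)_{\hat\fp}$.

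The main obstacle I anticipate is handling primes $p \mid |\Gamma'|$, where $e_1'$ is not integral and direct idempotent decompositions inside $\fO$ are unavailable. The hypothesis that $p$ is good for $e_{\Gamma/\Gamma'}$ is precisely what supplies the local matrix-algebra identification $(e\fO)_{\hat\fp} \cong M_{l'}(\so_\fp)$, while the saturation of $V^{\Gamma'}$ is a torsion-freeness fact that goes through regardless of whether $|\Gamma'|$ is a unit in $\so_\fp$; together these let the argument proceed uniformly across all good primes.
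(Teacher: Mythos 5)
Your proof is correct, and at the one genuinely substantive step it takes a different route from the paper. Both arguments reduce, as you do, to the $\fP$-adic completion $(e\fO)_{\hat\fp} \cong M_{l'}(\so_\fp)$, and both conclude by exhibiting a single element of $({}^{\Gamma'}(e\fO))_{\hat\fp}$ with a unit entry, so that the two-sided ideal it generates is all of $M_{l'}(\so_\fp)$. Where you differ is in how that element is produced. The paper takes $e_1'\pi^n$ with $n$ the least integer making $e_1'\pi^n$ integral (here $\pi$ is a central prime element of the local division algebra, so $e_1'\pi^n$ is still left $\Gamma'$-invariant); minimality of $n$ forces a unit entry. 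You instead identify ${}^{\Gamma'}\End_{\so_\fp}(V)$ with $\Hom_{\so_\fp}(V, V^{\Gamma'})$ for the column module $V=\so_\fp^{l'}$, prove that $V^{\Gamma'}$ is a nonzero saturated (hence free, hence split) $\so_\fp$-submodule, and take the projection onto a rank-one summand of $V^{\Gamma'}$. Your construction is more structural — it explains \emph{why} a unit entry must exist, namely because the invariant column space is a nonzero direct summand — at the cost of a little more module-theoretic setup (torsion-freeness, saturation, bases). The paper's $e_1'\pi^n$ trick is shorter but more opaque. Two small remarks on your write-up: the saturation check only verifies $\pi v \in V^{\Gamma'} \Rightarrow v \in V^{\Gamma'}$, but since every nonzero element of $\so_\fp$ is a unit times a power of $\pi$, an induction gives full saturation; and the claim $V^{\Gamma'} \neq 0$ deserves a sentence more — the point is that ${}^{\Gamma'}(e\Q[\Gamma])=e_1'e\Q[\Gamma]$ is a nonzero right module over the simple algebra $e\Q[\Gamma]$, so its completion at every $\fp$ is nonzero, which after dividing out the matrix factor is exactly $V^{\Gamma'}\otimes\Q_p\neq 0$.
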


\begin{proof} The map is well-defined because $e_1'\cdot y=y$ by multiplication in the group algebra $\Q_p[\Gamma]$, hence the product is actually $exy$ which is contained in $e\fO$.

Just like in the proof of Lemma \ref{lemma:the functor gives Gamma' invariant part when e_1' not contained in O}, we will check the surjectivity locally and use the same abuse of notations for $\fO$ and $\fo$. Let $e\Q_p[\Gamma]\cong M_l(D)$ be an isomorphism of $\Q_p$-algebras with $D$ a division algebra over $\Q_p$ such that $e\fO\cong M_l(\so)$ under the isomorphism where $\so\subseteq D$ is the unique maximal $\Z_p$-order of $D$ with the unique maximal two-sided ideal $\fp$ generated by a prime element $\pi$.

Since $\so$ is given by the valuation on $D$ extended from the valuation on $\Q_p$, there exists a smallest integer $n\in\Z$ such that $e_1'\pi^n\in e\fO$. In particular, there exists at least one unit element in the matrix representation of $e_1'\pi^n$. 

Claim: $e_1'\pi^n$ can generate the whole of $e\fO=M_l(\so)$ as $(e\fO,e\fO)$-bimodule. This can be shown by constructing the usual basis $\{E_{ij}\}$ from $e_1'\pi^n$ via finitely many row/column operations. 

Since $e_1'\pi^n$ is contained in the image, the claim shows that $e\fO e_1'\otimes_{ee_1'\fo}{}^{\Gamma'}(e\fO)\to e\fO$ is surjective, and we prove the lemma.
\end{proof}

We finally have the following.

\begin{theorem}\label{theorem: equivalence of categories} The category of $e_{\Gamma/\Gamma'}\fO$-modules and the category of $\fo$-modules are Morita equivalent via the functors: 
 \[\begin{aligned}
 &{}^{\Gamma'}(e_{\Gamma/\Gamma'}\fO)\otimes_{e_{\Gamma/\Gamma'}\fO}\mathrm{-}:e_{\Gamma/\Gamma'}\fO\mathrm{-Mod}\to\fo\mathrm{-Mod}\\
 &e_{\Gamma/\Gamma'}\fO e_1'\otimes_{\fo}\mathrm{-}:\fo\mathrm{-Mod}\to e_{\Gamma/\Gamma'}\fO\mathrm{-Mod}
 \end{aligned}\]
\end{theorem}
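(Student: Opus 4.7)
The plan is to prove Morita equivalence by verifying that the two natural bimodule evaluation maps are isomorphisms. Concretely, I will show that the $(e_{\Gamma/\Gamma'}\fO, e_{\Gamma/\Gamma'}\fO)$-bimodule map
\[
\alpha: e_{\Gamma/\Gamma'}\fO e_1' \otimes_{\fo} {}^{\Gamma'}(e_{\Gamma/\Gamma'}\fO) \to e_{\Gamma/\Gamma'}\fO,\qquad xe_1'\otimes y \mapsto xe_1'y,
\]
and the $(\fo, \fo)$-bimodule map
\[
\beta: {}^{\Gamma'}(e_{\Gamma/\Gamma'}\fO) \otimes_{e_{\Gamma/\Gamma'}\fO} e_{\Gamma/\Gamma'}\fO e_1' \to \fo, \qquad x\otimes ye_1' \mapsto xye_1',
\]
are both isomorphisms. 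Granting this, associativity of the tensor product immediately yields that the two compositions of the proposed functors are naturally isomorphic to the identity functors, which is the standard criterion for Morita equivalence.

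The map $\beta$ will be essentially tautological: applying Lemma \ref{lemma:the functor gives Gamma' invariant part when e_1' not contained in O} to the left $e_{\Gamma/\Gamma'}\fO$-module $e_{\Gamma/\Gamma'}\fO e_1'$ identifies the source with ${}^{\Gamma'}(e_{\Gamma/\Gamma'}\fO e_1')$, which is precisely $\fo$ by definition; tracing through the identification given in the proof of that lemma shows it agrees with $\beta$ on simple tensors.

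For $\alpha$, I will first decompose $e_{\Gamma/\Gamma'} = e_2 + \cdots + e_k$ into central irreducible idempotents to reduce to the corresponding statement $\alpha_i$ for each simple component $e_i$. Lemma \ref{lemma:Morita equivalence when e_1' not contained in O} directly supplies surjectivity of $\alpha_i$. For injectivity, I will use the faithful flatness of completion to pass to each $p \in S$, where $e_i \widehat{\fO}_p \cong M_l(\so)$ for the maximal order $\so$ in a local division algebra $D/\Q_p$ and $\widehat{\fo}_p = {}^{\Gamma'}(e_{\Gamma/\Gamma'}\widehat{\fO}_p e_1')$, reducing the problem to an explicit statement over matrix algebras.

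In this local picture, the image of $e_1'$ under $e_i\Q_p[\Gamma] \to M_l(D)$ is a nonzero idempotent $f$, and (after the uniformizer-scaling trick from the proof of Lemma \ref{lemma:Morita equivalence when e_1' not contained in O}) the right ideal $e_i\widehat{\fO}_p e_1'$ identifies with $M_l(\so)\cdot f$, a progenerator of the category of finitely generated right $M_l(\so)$-modules whose endomorphism ring is $fM_l(\so)f = ee_1'\widehat{\fo}_p$. Classical Morita theory then delivers the equivalence, and in particular the local form of $\alpha_i$ is an isomorphism, completing the proof after globalization. The main obstacle will be the careful treatment at primes $p \mid |\Gamma'|$ where $e_1' \notin \widehat{\fO}_p$: here one must use the smallest integer $n$ with $e_1'\pi^n \in e_i\widehat{\fO}_p$ to pin down the matrix description of $e_i\widehat{\fO}_p e_1'$, and then verify that its endomorphism ring matches $\widehat{\fo}_p$ on the nose rather than merely up to a subring of finite index.
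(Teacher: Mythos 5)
Your two-map plan is the right skeleton, and your treatment of $\beta$ is exactly the paper's: Lemma~\ref{lemma:the functor gives Gamma' invariant part when e_1' not contained in O} applied to $G=e_{\Gamma/\Gamma'}\fO e_1'$ identifies the source of $\beta$ with ${}^{\Gamma'}(e_{\Gamma/\Gamma'}\fO e_1')=\fo$ compatibly with the evaluation map. Where you and the paper diverge is in what you do next. You propose to prove that $\alpha$ is an isomorphism by hand: decompose by central idempotents, get surjectivity from Lemma~\ref{lemma:Morita equivalence when e_1' not contained in O}, then localize and argue injectivity by realizing $e_i\widehat{\fO}_p e_1'$ as a progenerator of $M_l(\so)$-mod with endomorphism ring $\widehat{\fo}_p$. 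You flag the endomorphism-ring matching at primes $p\mid|\Gamma'|$ as the main obstacle --- and indeed it is the genuinely delicate part, which your sketch leaves unresolved.

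The paper avoids this obstacle entirely by invoking the right piece of general theory. Once one has an honest Morita context $(e_{\Gamma/\Gamma'}\fO, \fo, {}^{\Gamma'}(e_{\Gamma/\Gamma'}\fO), e_{\Gamma/\Gamma'}\fO e_1', (\,,\,),[\,,\,])$ --- verified in the paper by noting the pairings are given by multiplication in $\Q[\Gamma]$, so the two compatibility identities hold automatically --- the Morita theorem (\cite[Theorem~3.54]{RC90methods}) says that if both pairings are \emph{surjective} then they are automatically isomorphisms, both bimodules are progenerators, and the two rings are Morita equivalent. So one never needs to prove injectivity of $\alpha$, and one never needs to identify $\fo$ with an endomorphism ring ``on the nose.'' Surjectivity of $\alpha$ is Lemma~\ref{lemma:Morita equivalence when e_1' not contained in O}, surjectivity of $(\,,\,)$ follows from Lemma~\ref{lemma:the functor gives Gamma' invariant part when e_1' not contained in O} as above, and you are done. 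In short: your route would work if you filled in the endomorphism-ring computation, but you can sidestep that computation completely. I would recommend you check the two Morita-context associativity identities explicitly for your bilinear pairings and then cite the surjectivity form of Morita's theorem rather than attempting the local injectivity argument.
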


\begin{proof}
Let's denote by $(\,,\,)$ the $e_{\Gamma/\Gamma'}\fO$-balanced bilinear map ${}^{\Gamma'}(e_{\Gamma/\Gamma'}\fO)\times e_{\Gamma/\Gamma'}\fO e_1'\to\fo$ defined by $(x,ye_1')\mapsto xye_1'$. This map is well-defined because $xye_1'\in\fO e_1'$ and $e_1'xye_1'=xye_1'$ is contained in the $\Gamma'$-invariant part.

Similarly let $[\,,\,]$ denote the $\fo$-balanced bilinear map $e_{\Gamma/\Gamma'}\fO e_1'\times{}^{\Gamma'}(e_{\Gamma/\Gamma'}\fO)\to e_{\Gamma/\Gamma'}\fO$ given by $[xe_1',y]\mapsto xe_1'y=xy$. Since these bilinear maps are defined using the multiplication in $\Q_p[\Gamma]$, they satisfy the condition for a Morita context, i.e.,
 \[ze_1'\cdot(x,ye_1')=[ze_1',x]\cdot ye_1',\textrm{ and } z\cdot[xe_1',y]=(z,xe_1')\cdot y.\]
Then $\{e_{\Gamma/\Gamma'}\fO,\fo,{}_{\fo}\big({}^{\Gamma'}(e_{\Gamma/\Gamma'}\fO)\big)_{e_{\Gamma/\Gamma'}\fO},{}_{e_{\Gamma/\Gamma'}\fO}\big(e_{\Gamma/\Gamma'}\fO e_1'\big)_{\fo},(\,,\,),[\,,\,]\}$ forms a Morita context.

The map $e_{\Gamma/\Gamma'}\fO e_1'\otimes{}^{\Gamma'}\fO\to e_{\Gamma/\Gamma'}\fO$ is surjective by Lemma \ref{lemma:Morita equivalence when e_1' not contained in O}. The other map is also surjective because we have 
 \[{}^{\Gamma'}(e_{\Gamma/\Gamma'}\fO)\otimes_{e_{\Gamma/\Gamma'}\fO}e_{\Gamma/\Gamma'}\fO e_1'={}^{\Gamma'}(e_{\Gamma/\Gamma'}\fO e_1')=\fo\]
by Lemma \ref{lemma:the functor gives Gamma' invariant part when e_1' not contained in O}. Then the equivalence and the functors are given by Morita theorem (see \cite[Theorem 3.54]{RC90methods}) directly.
\end{proof}

\begin{corollary}\label{cor: o is a maximal order} The $\Z_S$-order $\fo$ in $e_1'e_{\Gamma/\Gamma'}\Q[\Gamma]e_1'$ is a maximal order.\end{corollary}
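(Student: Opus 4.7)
The plan is to deduce the maximality of $\fo$ from the Morita equivalence in Theorem~\ref{theorem: equivalence of categories} together with the maximality of $e_{\Gamma/\Gamma'}\fO$. First I would verify that $e_{\Gamma/\Gamma'}\fO$ is itself a maximal $\Z_S$-order in $e_{\Gamma/\Gamma'}\Q[\Gamma]$. By \cite[Corollary 11.6]{R03maximal}, maximality can be checked prime by prime, so it suffices to check this at each $p\in S$. Since $p$ is a good prime for $e_{\Gamma/\Gamma'}$, the localization $e_{\Gamma/\Gamma'}\Z_{(p)}[\Gamma]$ is by definition a maximal $\Z_{(p)}$-order, and because $\fO\supseteq\Z_S[\Gamma]$ with $e_{\Gamma/\Gamma'}\in\Z_S[\Gamma]$, one has $e_{\Gamma/\Gamma'}\fO_{(p)}=e_{\Gamma/\Gamma'}\Z_{(p)}[\Gamma]$.

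The second step is to transfer maximality across the Morita equivalence. It is a classical fact that if two $\Z_S$-orders $\Lambda,\Lambda'$ in semisimple $\Q$-algebras are Morita equivalent as $\Z_S$-algebras (via a progenerator bimodule defined over $\Z_S$), then $\Lambda$ is a maximal $\Z_S$-order if and only if $\Lambda'$ is; see, e.g., \cite[Theorem 21.6 and Corollary 21.7]{R03maximal} for the local version, and reduce the global case to it by localization. Theorem~\ref{theorem: equivalence of categories} provides exactly such an equivalence between $e_{\Gamma/\Gamma'}\fO$ and $\fo$, with progenerator bimodules ${}^{\Gamma'}(e_{\Gamma/\Gamma'}\fO)$ and $e_{\Gamma/\Gamma'}\fO e_1'$, both of which are $\Z_S$-submodules of $\Q[\Gamma]$, so the equivalence is compatible with the $\Z_S$-structure. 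Therefore $\fo$ is a maximal $\Z_S$-order in $e_1'e_{\Gamma/\Gamma'}\Q[\Gamma]e_1'$.

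The main obstacle is confirming that the general ``Morita equivalence preserves maximality'' statement applies to the specific equivalence we have constructed. If one prefers a self-contained argument, an alternative is to argue locally and explicitly: after completing at each $p\in S$, decompose $e_{\Gamma/\Gamma'}\widehat{\fO}_p$ into blocks isomorphic to $M_{l_i}(\so_i)$ with $\so_i$ the unique maximal order in a $p$-adic division algebra $D_i$, so that the corresponding block of $\widehat{\fo}_p$ is the corner $e_1'M_{l_i}(\so_i)e_1'$. By the Krull-Schmidt-Azumaya argument already invoked in the proof of Proposition~\ref{Prop:7.1} (applied now integrally, by lifting an idempotent decomposition of the identity in $M_{l_i}(\so_i)$ compatible with $e_1'$), this corner is isomorphic as a $\Z_p$-algebra to $M_{l_i'}(\so_i)$ for some $l_i'\leq l_i$, which is well-known to be a maximal $\Z_p$-order in $M_{l_i'}(D_i)$ (\cite[Theorem 17.3]{R03maximal}).
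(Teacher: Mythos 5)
Your main route---transfer maximality across the Morita equivalence of Theorem~\ref{theorem: equivalence of categories}---is the same idea as the paper's, and it works, but the paper does not invoke ``Morita equivalence preserves maximality of orders'' as an off-the-shelf theorem (and I do not believe the results you cite, Theorem~21.6 and Corollary~21.7 of \cite{R03maximal}, say this). Instead the paper proves what is needed: hereditariness is a Morita invariant (it is the statement that global dimension is $\leq 1$), so $\fo$ is hereditary; then the structure theorem \cite[39.14]{R03maximal} for hereditary orders over a complete discrete valuation ring, combined with the fact that the number of isomorphism classes of indecomposable projectives is a Morita invariant and equals $1$ for the maximal order $e\fO$, forces the hereditary order $ee_1'\fo$ to have a single block, hence to be maximal by \cite[17.3]{R03maximal}. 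So if you want to keep your first route, you should supply essentially this argument in place of the black-box citation.

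Your ``self-contained alternative,'' however, has a genuine gap. You identify $\widehat{\fo}_p$ with the two-sided corner $e_1'M_{l_i}(\so_i)e_1'$ inside a block of $e_{\Gamma/\Gamma'}\widehat{\fO}_p$, and then apply Krull--Schmidt--Azumaya to an integral idempotent decomposition ``compatible with $e_1'$.'' This breaks precisely at the primes $p\mid\lvert\Gamma'\rvert$, which are permitted (e.g.\ $p$ is good for $(S_{p+1},S_p)$) and which the construction of $\fo$ is specifically designed to handle. At such $p$ the idempotent $e_1'$ does \emph{not} lie in $\widehat{\fO}_p$, so it cannot be lifted within the order, and the corner $e_1'\widehat{\fO}_pe_1'$ is not even closed under multiplication (for $x,y\in\widehat{\fO}_p$, $(e_1'xe_1')(e_1'ye_1')=e_1'(xe_1'y)e_1'$ and $xe_1'y$ need not lie in $\widehat{\fO}_p$). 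Moreover $\fo={}^{\Gamma'}(e_{\Gamma/\Gamma'}\fO e_1')$ unwinds to $e_1'e_{\Gamma/\Gamma'}\fO e_1'\cap e_{\Gamma/\Gamma'}\fO e_1'$, which can be strictly smaller than the corner. A concrete instance: $\Gamma=S_3$, $\Gamma'=\langle(23)\rangle$, $p=2$, $e_{\Gamma/\Gamma'}\Z_{(2)}[\Gamma]\cong M_2(\Z_{(2)})$; in a suitable basis $e_1'=\bigl(\begin{smallmatrix}1&0\\1/2&0\end{smallmatrix}\bigr)$, and one computes $e_1'M_2(\Z_{(2)})e_1'=\tfrac{1}{2}\Z_{(2)}\cdot e_1'$ (not a ring), while $\fo=\Z_{(2)}\cdot e_1'\cong\Z_{(2)}$. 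So the alternative does not salvage the argument; one really does need the hereditary-order route of the paper.
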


\begin{proof} By \cite[11.6]{R03maximal}, it suffices to show that $\widehat{\fo}_p=\fo\otimes_{\Z_S}\Z_p$ is a maximal $\Z_p$-order in $e_1'e_{\Gamma/\Gamma'}\Q_p[\Gamma]e_1'$ for each $p\in S$.

Let $A=\Q_p[\Gamma]$ and $A'=e_1'e_{\Gamma/\Gamma'}\Q[\Gamma]e_1'$. We use the same abuse of notation for $\fO$ and $\fo$ as in the proof of Lemma \ref{lemma:the functor gives Gamma' invariant part when e_1' not contained in O} (i.e., $\fO:=\widehat{\fO}$ and $\fo:=\widehat{\fo}$).

First of all $\fo$ is Morita equivalent to $e_{\Gamma/\Gamma'}\fO$. Since $e_{\Gamma/\Gamma'}\fO$ is hereditary and this property is preserved by Morita equivalence, we know that $\fo$ is also a hereditary ring. Let $e\neq e_1$ be an irreducible central idempotent in $A$ such that $e\cdot e_1'\neq0$, and $eA\cong M_l(D)$ where $D$ is a division algebra over $\Q_p$ and $ee_1'A'\cong M_{l'}(D)$ for some $l'<l$ (see Proposition \ref{Prop:7.1}). By \cite[39.14]{R03maximal}, if $ee_1'\fo$ is a hereditary order in $ee_1'A'$, then it is of the form
 \[ee_1'\fo\cong\begin{pmatrix}
 (\so)&(\fp)&(\fp)&\cdots&(\fp)\\
 (\so)&(\so)&(\fp)&\cdots&(\fp)\\
 (\so)&(\so)&(\so)&\cdots&(\fp)\\
 \vdots&\vdots&\vdots&\vdots&\vdots\\
 (\so)&(\so)&(\so)&\cdots&(\so)
 \end{pmatrix}^{(n_1,\dots,n_r)}\]
where $\so\subseteq D$ is the maximal order in $D$ and $\fp$ its unique maximal ideal and $n_1+\cdots+n_r=l'$ gives the size of the block along the diagonal. 

Assume for contradiction that $ee_1'\fo$ is not maximal. By \cite[17.3]{R03maximal}, we know that $r\geq2$ and there exists at least two non-isomorphic indecomposable projective modules, because a column in the above matrix representation is exactly an indecomposable projective module. But this is already contradiction, for $e\fO$ only admits one indecomposable projective module up to isomorphism. 

Therefore, $ee_1'\fo$ must be of the form $M_{l'}(\so)$, and it is a maximal order of $ee_1'A'$ again by \cite[17.3]{R03maximal}. The argument holds for all $ee_1'$, hence $\fo$ is a maximal order of $A'$.
\end{proof}

\subsection{Random \texorpdfstring{$\fo$}{o}-Module}
From \eqref{eqn: probability of non-Galois fields in section 6}, we were led to wanting to understand the distribution of the abelian groups ${}^{\Gamma'}X$ for our random $e_{\Gamma/\Gamma'}\fO$-modules $X$.  Now, we realize that ${}^{\Gamma'}X$
is naturally an $\fo$-module, so we will instead consider the distribution of $\fo$-modules ${}^{\Gamma'}X$.

One one hand, the random $e_{\Gamma/\Gamma'}\fO$-module $X=X(e_{\Gamma/\Gamma'}\Q[\Gamma],\uu,e_{\Gamma/\Gamma'}\fO)$ defined in Section~\ref{S: notations for the Heuristics}
with $\uu=(u_2,\dots,u_{k})\in\Q^{k-1}$ gives us a random $\fo$-module ${}^{\Gamma'}X$. 
 On the other hand, since $\fo$ is a maximal order in the semisimple $\Q$-algebra $e_1'e_{\Gamma/\Gamma'}\Q[\Gamma]e_1'$, we can also define a random $\fo$-module $Y=(e_1'e_{\Gamma/\Gamma'}\Q[\Gamma]e_1',\uv,\fo)$ with $\uv=(v_2,\dots,v_{k})\in\Q^{k-1}$. We are going to show that for suitably chosen $\uu\in\Q^{k-1}$ and $\uv\in\Q^{k-1}$, the random $\fo$-modules ${}^{\Gamma'}X$ and $Y$ have the same distribution. For simplicity, let 
$$X'= {}^{\Gamma'}X.$$

\begin{theorem}\label{thm: choice of v} 
Let $e_1,\dots,e_m$ be the distinct irreducible central idempotents of $\Q[\Gamma]$ and $e_{\Gamma/\Gamma'}=e_2+\cdots+e_k$. 
Let $\chi_i$ be the $\Q$-irreducible character associated to $e_i$ and $\varphi_i$ be any fixed absolutely irreducible character contained in $\chi_i$ for all $i=2,\dots,k$. Let $X=X(e_{\Gamma/\Gamma'}\Q[\Gamma],\uu,e_{\Gamma/\Gamma'}\fO)$ and $Y=Y(e_1'e_{\Gamma/\Gamma'}\Q[\Gamma]e_1',\uv,\fo)$ with $\uu,\uv\in\Q^{k-1}$ so that $u_i$ corresponds to $e_i$ and $v_i$ corresponds to $e_ie_1'$ for all $i=2,\dots,k$. The random $\fo$-modules $X'={}^{\Gamma'}X$ and $Y$ give the same probability distribution if and only if
 \[v_i=\frac{\langle\varphi_i,a_{\Gamma}\rangle}{\langle\varphi_i,a_{\Gamma/\Gamma'}\rangle}u_i\]
for all $i=2,\dots,k$, where $a_\Gamma=a_{\Gamma/1}:=-1+\Ind_1^\Gamma 1$ is the augmentation character of the trivial subgroup.
\end{theorem}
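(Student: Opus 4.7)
The plan is to deduce the theorem directly from the Morita equivalence of Theorem~\ref{theorem: equivalence of categories} combined with the character-theoretic computation already carried out in the proof of Theorem~\ref{thm:u in Cohen-Martinet}. First, the functor $F := {}^{\Gamma'}(e_{\Gamma/\Gamma'}\fO)\otimes_{e_{\Gamma/\Gamma'}\fO}-$, which by Lemma~\ref{lemma:the functor gives Gamma' invariant part when e_1' not contained in O} sends a finite $e_{\Gamma/\Gamma'}\fO$-module $G$ to ${}^{\Gamma'}G$, is an equivalence of categories; hence it induces a bijection on isomorphism classes of finite modules together with the identity $|\Aut_{e_{\Gamma/\Gamma'}\fO}(G)| = |\Aut_{\fo}({}^{\Gamma'}G)|$ for every such $G$. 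Consequently $\mathbb{P}(X' \cong {}^{\Gamma'}G) = \mathbb{P}(X \cong G)$ is proportional to $|G|^{-\uu}|\Aut({}^{\Gamma'}G)|^{-1}$, while $\mathbb{P}(Y \cong {}^{\Gamma'}G)$ is proportional to $|{}^{\Gamma'}G|^{-\uv}|\Aut({}^{\Gamma'}G)|^{-1}$.

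Second, since both $X'$ and $Y$ are probability measures on the same countable set, their distributions coincide if and only if the ratio $|G|^{\uu}/|{}^{\Gamma'}G|^{\uv}$ is independent of $G$. I would immediately split this question across the irreducible central idempotents by restricting attention to $G$ supported on a single $e_i$ with $2\leq i\leq k$; the general condition then factors as a product of the conditions $|G|^{u_i} = c_i\,|{}^{\Gamma'}G|^{v_i}$ for some constant $c_i$ depending only on $i$.

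Third, I invoke \cite[Theorem 7.3]{CM90}, the very tool used in the proof of Theorem~\ref{thm:u in Cohen-Martinet}, applied now with $\Gamma'$ in place of the decomposition group $\Gamma_v$: for each finite $e_i\fO$-module $G$ there is an abelian group $H$ with $G \cong H^{\langle \chi_i, a_\Gamma\rangle}$ and ${}^{\Gamma'}G \cong H^{\langle \chi_i, a_{\Gamma/\Gamma'}\rangle}$ as abelian groups. Substituting yields
\[
|G|^{u_i}\,|{}^{\Gamma'}G|^{-v_i} \;=\; |H|^{u_i\langle\chi_i,a_\Gamma\rangle - v_i\langle\chi_i,a_{\Gamma/\Gamma'}\rangle}.
\]
Because $e_i\cdot e_{\Gamma/\Gamma'}=e_i$ forces $\langle\chi_i,a_{\Gamma/\Gamma'}\rangle>0$, and since one can realize $|H|$ for arbitrarily large values (e.g. by taking $G$ to be a free $e_i\fO$-module of increasing rank), the constancy of the above forces $u_i\langle\chi_i,a_\Gamma\rangle = v_i\langle\chi_i,a_{\Gamma/\Gamma'}\rangle$. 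The Schur-index reduction already performed in the proof of Theorem~\ref{thm:u in Cohen-Martinet} gives $\langle\chi_i,a_\Gamma\rangle/\langle\chi_i,a_{\Gamma/\Gamma'}\rangle = \langle\varphi_i,a_\Gamma\rangle/\langle\varphi_i,a_{\Gamma/\Gamma'}\rangle$, yielding the stated formula.

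I expect the only genuine obstacle is checking that the Morita functor $F$ really does match the ${}^{\Gamma'}(-)$ functor as $\fo$-modules, not just as abelian groups; Lemma~\ref{lemma:the functor gives Gamma' invariant part when e_1' not contained in O} together with Lemma~\ref{lemma: Gamma'-invariant part is an o-module} makes this essentially automatic, but one must be careful that the $\fo$-action transported through $F$ is the one defined in Lemma~\ref{lemma: Gamma'-invariant part is an o-module}. A small bookkeeping point is that the reduction to a single $e_i$ requires the ratio relation to factor multiplicatively over the components, which follows because both $|G|^{\uu}$ and $|{}^{\Gamma'}G|^{\uv}$ do; verifying this and the range of $|H|$ in the last step are the only computational details, but they are routine.
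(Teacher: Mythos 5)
Your proof follows essentially the same route as the paper's: Morita equivalence gives $|\Aut_{e_{\Gamma/\Gamma'}\fO}(G)| = |\Aut_\fo({}^{\Gamma'}G)|$, the idempotent decomposition reduces to each $e_i$, and \cite[Theorem 7.3]{CM90} (with $\Gamma'$ in place of a decomposition group) supplies the abelian group $H$ whose order controls both $|e_iG|$ and $|{}^{\Gamma'}(e_iG)|$, after which the Schur-index manipulation from Theorem~\ref{thm:u in Cohen-Martinet} converts $\chi_i$ to $\varphi_i$. One small slip: a free $e_i\fO$-module of positive rank is infinite, so you cannot use it as a witness; instead take finite modules such as $(e_i\fO/p^n e_i\fO)$ for $p\in S$ and varying $n$, or simply $G$ and $G\oplus G$, to see that $|H|$ takes at least two distinct values and force the exponent to vanish.
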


\begin{proof} We will start by obtaining the formula for the probability distribution of $X'$. For any finite $\fo$-module $H$, we have $X'\cong H$ if and only if $X\cong e_{\Gamma/\Gamma'}\fO e_1'\otimes_{\fo}H$ by Theorem~\ref{theorem: equivalence of categories}. Therefore for any two finite $\fo$-modules $H_1,H_2$, let $G_i:=e_{\Gamma/\Gamma'}\fO e_1'\otimes_{\fo}H_i$ for $i=1,2$, and we have
 \[\frac{\mathbb{P}(X'\cong H_1)}{\mathbb{P}(X'\cong H_2)}=\frac{\lvert G_2\rvert^{\uu}\lvert\Aut_{e_{\Gamma/\Gamma'}\fO}(G_2)\rvert}{\lvert G_1\rvert^{\uu}\lvert\Aut_{e_{\Gamma/\Gamma'}\fO}(G_1)\rvert}=\frac{\lvert G_2\rvert^{\uu}\lvert\Aut_\fo(H_2)\rvert}{\lvert G_1\rvert^{\uu}\lvert\Aut_\fo(H_1)\rvert}.\]

Given any finite $\fo$-module $H$, let $G:=e_{\Gamma/\Gamma'}\fO e_1'\otimes H$ be the finite $e_{\Gamma/\Gamma'}\fO$-module such that ${}^{\Gamma'}G\cong H$. By \cite[Theorem 7.3]{CM90}, for each $i=2,\ldots,k$, there exists some finite $\Z_S$-module $G_i$ such that
 \begin{equation}\label{eqn:7.3 in CM90} e_iG\cong G_i^{\langle\chi_i,a_{\Gamma}\rangle}\quad\text{ and }\quad e_ie_1'H={}^{\Gamma'}(e_iG)\cong G_i^{\langle\chi_i,a_{\Gamma/\Gamma'}\rangle},\end{equation}
where the isomorphisms are isomorphisms as abelian groups. We then know that 
\begin{equation}\label{E:utov}
|e_iG|=|e_ie_1'H|^{\langle\chi_i,a_{\Gamma}\rangle/\langle\chi_i,a_{\Gamma/\Gamma'}\rangle}.
\end{equation}
Therefore if
 \[v_i=\frac{\langle\varphi_i,a_{\Gamma}\rangle}{\langle\varphi_i,a_{\Gamma/\Gamma'}\rangle}u_i\]
for all $i=2,\dots,k$, then $\lvert G\rvert^{\uu}=\lvert H\rvert^{\uv}$, hence $X'$ is defined the same way as $Y$ and they give the same probability distribution.

Conversely if $X'$ and $Y$ give the same distribution, then
 \[\frac{\lvert G_2\rvert^{\uu}}{\lvert G_1\rvert^{\uu}}=\frac{\lvert H_2\rvert^{\uv}}{\lvert H_1\rvert^{\uv}}\]
for all finite $e_{\Gamma/\Gamma'}\fO$-modules $G_1,G_2$ such that $H_i:={}^{\Gamma'}G_i$ with $i=1,2$. Then the identities (\ref{eqn:7.3 in CM90}) tell us that this condition forces
 \[v_i=\frac{\langle\varphi_i,a_{\Gamma}\rangle}{\langle\varphi_i,a_{\Gamma/\Gamma'}\rangle}u_i\]
for all $i=2,\dots,k$. 
\end{proof}

\begin{definition} Let $L|K_0$ be a \(\Gamma\)-extension and $\uu\in\Q^m$ be the rank of $L|K_0$. Then define $\uv\in\Q^{k-1}$ given by the formula in Theorem \ref{thm: choice of v} to be the rank of $L^{\Gamma'}|K_0$.
(In Section~\ref{S:ind} we show this does not depend on $L$, but only $L^{\Gamma'}$.)
\end{definition}

Just like in Section~\ref{S:u}, we can express  $\lvert H\rvert^{\uv}$ in terms of the decomposition groups $\Gamma_v$ at infinite places $v|\infty$.

\begin{corollary}\label{C:uandv}
 If $\uu$ is given by the rank of a \(\Gamma\)-extension $L|K_0$ and $\uv$ the rank of $L^{\Gamma'}|K_0$ (as given in the definition just above), then for any finite $\fo$-module $H$, we have
 \[\lvert H\rvert^{\uv}=\lvert e_{\Gamma/\Gamma'}\fO e_1'\otimes_\fo H\rvert^{\uu}=\prod_{v|\infty}\lvert(e_{\Gamma/\Gamma'}\fO e_1'\otimes_\fo H)^{\Gamma_v}\rvert\]
where $v$ runs over all infinite places of $K_0$.
\end{corollary}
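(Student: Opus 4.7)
The plan is to chain two displayed computations from earlier in the paper. Set $G := e_{\Gamma/\Gamma'}\fO e_1' \otimes_\fo H$, so by Theorem~\ref{theorem: equivalence of categories} the object $G$ is a finite $e_{\Gamma/\Gamma'}\fO$-module with ${}^{\Gamma'}G \cong H$ as $\fo$-modules. The corollary then amounts to two separate identities: $|H|^{\uv} = |G|^{\uu}$ and $|G|^{\uu} = \prod_{v\mid\infty}|G^{\Gamma_v}|$.

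First I would establish the right-hand equality. This is a direct application of the extension of Theorem~\ref{thm:u in Cohen-Martinet} recorded in the remark after its proof: that extension allows the central idempotent $e$ and the set $S$ to satisfy only the conditions $e_1\cdot e = 0$, $e \in \Z_S[\Gamma]$, and $e\Z_S[\Gamma]$ a maximal order in $e\Q[\Gamma]$. In our setting $e = e_{\Gamma/\Gamma'}$ satisfies $e_1\cdot e_{\Gamma/\Gamma'} = 0$ by definition, and by our standing assumption every $p \in S$ is a good prime for $e_{\Gamma/\Gamma'}$, so the hypotheses are met. Applied to the $e_{\Gamma/\Gamma'}\fO$-module $G$, the extended theorem gives $|G|^{\uu} = \prod_{v\mid\infty}|G^{\Gamma_v}|$.

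Next I would handle the left equality $|H|^{\uv} = |G|^{\uu}$, which is essentially the content of equation~\eqref{E:utov} inside the proof of Theorem~\ref{thm: choice of v}. Indeed, by \cite[Theorem 7.3]{CM90} one produces for each $i = 2,\dots,k$ an abelian group $G_i$ with $e_i G \cong G_i^{\langle\chi_i,a_\Gamma\rangle}$ and $e_i e_1' H = {}^{\Gamma'}(e_i G) \cong G_i^{\langle\chi_i,a_{\Gamma/\Gamma'}\rangle}$ as abelian groups, hence $|e_i G| = |e_i e_1' H|^{\langle\chi_i,a_\Gamma\rangle/\langle\chi_i,a_{\Gamma/\Gamma'}\rangle}$. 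Since $\chi_i$ is the common Schur multiple $d_i$ times the sum of the Galois conjugates of $\varphi_i$, both inner products scale by the same factor $d_i j_i$, so the exponent above equals $\langle\varphi_i,a_\Gamma\rangle/\langle\varphi_i,a_{\Gamma/\Gamma'}\rangle = v_i/u_i$ by the definition of the rank $\uv$ of $L^{\Gamma'}|K_0$. Raising to the $u_i$ and multiplying across $i = 2,\dots,k$ then yields $|G|^{\uu} = |H|^{\uv}$.

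There is no substantive obstacle: once one records that the extended version of Theorem~\ref{thm:u in Cohen-Martinet} applies to $G$ and that the rank $\uv$ was defined precisely to match the Herbrand-quotient identities of \cite[Theorem 7.3]{CM90}, the corollary is a two-line concatenation. The only point to double-check is that the factor of $d_i j_i$ cancels between numerator and denominator when passing from $\chi_i$ to $\varphi_i$, so that the rank formula from Theorem~\ref{thm: choice of v} reproduces exactly the exponents forced by the abelian-group identities.
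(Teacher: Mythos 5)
Your proof is correct and is exactly the paper's intended argument, just unpacked: the paper's one-line proof cites Theorem~\ref{thm: choice of v} (whose displayed identity \eqref{E:utov} yields $|H|^{\uv}=|G|^{\uu}$) and Theorem~\ref{thm:u in Cohen-Martinet} (which, via the remark after its proof extending it to good primes for a central idempotent $e$ with $e_1e=0$, yields $|G|^{\uu}=\prod_{v\mid\infty}|G^{\Gamma_v}|$). You were right to invoke the remark rather than the theorem's literal statement, since $S$ in Section~\ref{S:Re} may contain primes dividing $|\Gamma|$.
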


\begin{proof} This is the combination of Theorem \ref{thm: choice of v} and Theorem \ref{thm:u in Cohen-Martinet}.
\end{proof}

By  Theorem~\ref{thm: choice of v}, we can always identify the random \(\fo\)-module ${}^{\Gamma'}X$ with some random $\fo$-module $Y=Y(e_1'e_{\Gamma/\Gamma'}\Q[\Gamma]e_1',\uv,\fo)$ and the Cohen-Martinet conjecture predicts $\Cl_K^S$ are distributed as random $\fo$-modules.

\begin{theorem}\label{thm: punchline} Let $\Gamma$ be a finite group and $\Gamma'\subseteq\Gamma$ a subgroup. Assume that $S$ only contains good primes for $e_{\Gamma/\Gamma'}$. If $\uu$ is the rank of some \(\Gamma\)-extension $L|K_0$, then let $\uv$ be the rank of $L^{\Gamma'}|K_0$ (as given in the definition just above) and $Y=Y(e_1'e_{\Gamma/\Gamma'}\Q[\Gamma]e_1',\uv,\fo)$ be the random finite $\fo$-module. For a non-negative function $f$ defined on the class of isomorphism classes of finite $\fo$-modules, the Cohen-Martinet conjecture (Conjecture~\ref{C:CMfull} for $f({}^{\Gamma'}-)$ and $e=e_{\Gamma/\Gamma'}$) implies that
 \[\lim_{x\to\infty}\frac{\sum_{|d_L|\leq x}f(\Cl^S_{L^{\Gamma'}/K_0})}{\sum_{|d_L|\leq x}1}=\mathbb{E}\big(f(Y)\big),\]
where the sums are over \(\Gamma\)-extensions $L|K_0$ and the discriminant $\lvert d_L\rvert\leq x$ and the rank of $L|K_0$ is $\uu$.
\end{theorem}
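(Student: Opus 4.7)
The plan is to deduce the statement by chaining three of the structural results already established in the paper; essentially all the technical content has been packaged into Theorem~\ref{thm: Gamma'-invariant}, Theorem~\ref{theorem: equivalence of categories}, and Theorem~\ref{thm: choice of v}, so the argument will be primarily a matter of assembly.

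First I would apply Conjecture~\ref{C:CMfull} with $e = e_{\Gamma/\Gamma'}$ to the non-negative function $g$ on isomorphism classes of finite $e_{\Gamma/\Gamma'}\Z_S[\Gamma]$-modules defined by $g(G) := f({}^{\Gamma'}G)$. Since the functor $G\mapsto {}^{\Gamma'}G$ depends only on the isomorphism class of $G$, the function $g$ inherits reasonableness from $f$ under any sensible interpretation of the word. This rewrites the Cohen--Martinet prediction in the Galois case as
\[
\lim_{x\to\infty}\frac{\sum_{|d_L|\leq x}f\bigl({}^{\Gamma'}(e_{\Gamma/\Gamma'}\Cl^S_L)\bigr)}{\sum_{|d_L|\leq x}1} \;=\; \E\bigl(f({}^{\Gamma'}X)\bigr),
\]
where $X = X(e_{\Gamma/\Gamma'}\Q[\Gamma],\uu,e_{\Gamma/\Gamma'}\fO)$ and the sum is over $\Gamma$-extensions $L/K_0$ of rank $\uu$.

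Next I would invoke Theorem~\ref{thm: Gamma'-invariant}(ii) (translating its right-action $(-)^{\Gamma'}$ notation of Section~\ref{S:NG} into the left-action notation ${}^{\Gamma'}(-)$ of Section~\ref{S:Re}) to obtain a canonical isomorphism $i_* : \Cl^S_{L^{\Gamma'}/K_0} \stackrel{\sim}{\to} {}^{\Gamma'}(e_{\Gamma/\Gamma'}\Cl^S_L)$. By Lemma~\ref{lemma: Gamma'-invariant part is an o-module} the right-hand side carries a natural $\fo$-module structure, which transports across $i_*$ to endow $\Cl^S_{L^{\Gamma'}/K_0}$ with the $\fo$-module structure referred to in the theorem statement. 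Hence the values $f(\Cl^S_{L^{\Gamma'}/K_0})$ and $f({}^{\Gamma'}(e_{\Gamma/\Gamma'}\Cl^S_L))$ agree. Finally, Theorem~\ref{thm: choice of v} identifies the distribution of the random $\fo$-module ${}^{\Gamma'}X$ with that of $Y$, precisely because the definition just above Corollary~\ref{C:uandv} fixes $\uv$ to be the rank vector arising in that theorem. Consequently $\E(f({}^{\Gamma'}X)) = \E(f(Y))$, and combining the three steps yields the desired equality.

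The ``hard part,'' such as it is, lies entirely upstream: the Morita equivalence of Theorem~\ref{theorem: equivalence of categories} is what makes Theorem~\ref{thm: choice of v} meaningful, and Theorem~\ref{thm: Gamma'-invariant} is what lets us pass between the class group of the non-Galois field and the $\Gamma'$-invariants of a piece of the Galois class group. Given those ingredients, the only thing to be careful about here is conceptual bookkeeping: one must track through that the $e_{\Gamma/\Gamma'}$-component selected by Conjecture~\ref{C:CMfull}, once pushed through $\Gamma'$-invariants, produces exactly the independently-defined random $\fo$-module $Y$ of the statement, rather than some other parametrized random $\fo$-module.
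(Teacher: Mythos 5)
Your proposal is correct and matches what the paper intends: the paper states Theorem~\ref{thm: punchline} without a displayed proof, packaging it as the assembly of Conjecture~\ref{C:CMfull} (applied to $g=f({}^{\Gamma'}-)$ and $e=e_{\Gamma/\Gamma'}$), Theorem~\ref{thm: Gamma'-invariant}(ii) (to identify $\Cl^S_{L^{\Gamma'}/K_0}$ with ${}^{\Gamma'}(e_{\Gamma/\Gamma'}\Cl^S_L)$ as $\fo$-modules via Lemma~\ref{lemma: Gamma'-invariant part is an o-module}), and Theorem~\ref{thm: choice of v} (to identify the distribution of ${}^{\Gamma'}X$ with that of $Y$), exactly as you lay out. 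Your bookkeeping remarks about the left- versus right-superscript invariant conventions and about $e_{\Gamma/\Gamma'}\Z_S[\Gamma]=e_{\Gamma/\Gamma'}\fO$ under the good-prime hypothesis are the only real pitfalls, and you address both.
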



In particular, the results of Section~\ref{S:moments} all apply here to give the moments of the predicted distributions and see that the distributions are determined by their moments.  

\begin{remark}
The probabilities in Theorem~\ref{thm: punchline} are
$$
\frac{c}{|H|^{\uv}|\Aut_{\fo}(H)|}
$$
for each finite $\fo$-module $H$.  
We also see that if we want the probability of obtaining some finite abelian group $H$, then the desired probability in \eqref{eqn: probability of non-Galois fields in section 6} can be rewritten as a sum over $\fo$-module structures on the finite abelian group $H$ of the above probabilities.
One could  also apply the class triples approach of Section~\ref{S:getaut} to obtain probabilities that are purely inversely proportional to automorphisms of some object.  Perhaps the simplest way to do this to make a class triple from $e_{\Gamma/\Gamma'}\Cl_L^S$.
\end{remark}

\subsection{Examples}
In this section, we give some examples of specific $\Gamma$ and $\Gamma'$ to see what $\fo$ is in that case.
Given a finite group $\Gamma$ and subgroup $\Gamma'$, we define $e_i,\chi_i,\varphi_i$ as in Theorem~\ref{thm: choice of v}.
We have that $e_i\Q[\Gamma]\isom M_{l_i}(D_i),$ where $D_i$ is a division algebra with center $K_i$, and $K_i$ is the field generated by the values of $\varphi_i$.
We can decompose
$$
a_{\Gamma/\Gamma'}=\sum_{i=2}^{k} a_i\chi_i.
$$
for positive integers $a_i.$  Then we can see from the proof of Proposition~\ref{Prop:7.1} and a dimension calculation using Frobenius reciprocity that
$$
e_1' e_{\Gamma/\Gamma'} \Q[\Gamma]e_1' \isom \bigoplus_{i=2}^k M_{a_i}(D_i).
$$
From this we conclude the following about the cases in which there is really no additional structure by realizing the class group is an $\fo$-module.
\begin{proposition}\label{Prop: criterion} The maximal $\Z_S$-order $\fo$ in $e_1'e_{\Gamma/\Gamma'}\Q[\Gamma]e_1'$ is isomorphic to $\Z_S$ if and only if $a_{\Gamma/\Gamma'}$ is absolutely irreducible.\end{proposition}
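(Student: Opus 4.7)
The plan is to leverage the decomposition $e_1'e_{\Gamma/\Gamma'}\Q[\Gamma]e_1' \cong \bigoplus_{i=2}^k M_{a_i}(D_i)$ stated just before the proposition, together with Corollary~\ref{cor: o is a maximal order} which says that $\fo$ is a maximal $\Z_S$-order in this algebra. Since $\Z_S$ is itself the unique maximal $\Z_S$-order in $\Q$, and since any ring isomorphism $\fo \cong \Z_S$ extends to a $\Q$-algebra isomorphism $\fo \otimes_{\Z_S} \Q \cong \Q$, the whole problem reduces to characterizing when the above direct sum collapses to a single copy of $\Q$.

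First I would observe that $\dim_\Q M_{a_i}(D_i) = a_i^2 \cdot \dim_\Q D_i \geq 1$, with equality if and only if there is a single summand with $a_i = 1$ and $D_i = \Q$. Translating the condition $D_i = \Q$ via the standard facts recalled in Section~\ref{S:ssNot}, namely that the center of $D_i$ is the field of values $K_i$ of $\varphi_i$ and $[D_i:K_i] = d_i^2$ where $d_i$ is the Schur index, this forces $K_i = \Q$ and $d_i = 1$. Combined with the usual decomposition $\chi_i = d_i(\varphi_{i,1} + \cdots + \varphi_{i,j_i})$ over Galois conjugates of $\varphi_i$ (used already in the proof of Theorem~\ref{thm:u in Cohen-Martinet}), the conditions $K_i = \Q$ and $d_i = 1$ give $j_i = 1$ and $\chi_i = \varphi_i$. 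Therefore $a_{\Gamma/\Gamma'} = a_i \chi_i = \varphi_i$ is absolutely irreducible.

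For the converse, suppose $a_{\Gamma/\Gamma'}$ is absolutely irreducible. Expanding $a_{\Gamma/\Gamma'} = \sum_{i=2}^k a_i\chi_i = \sum_i a_i d_i (\varphi_{i,1} + \cdots + \varphi_{i,j_i})$ expresses $a_{\Gamma/\Gamma'}$ as a nonnegative integer combination of distinct absolutely irreducible characters with total multiplicity $\sum_i a_i d_i j_i$. Absolute irreducibility forces exactly one nonzero term with $a_i = d_i = j_i = 1$, whence $K_i = \Q$, $D_i = \Q$, and the direct sum reduces to $M_1(\Q) = \Q$. Then $\fo$ is the maximal $\Z_S$-order in $\Q$, so $\fo \cong \Z_S$.

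The proof is essentially bookkeeping with the Wedderburn decomposition and the relation between Schur indices, Galois conjugates, and simple components; the technical content has already been absorbed into the decomposition formula stated before the proposition and into Corollary~\ref{cor: o is a maximal order}, so no real obstacle is expected.
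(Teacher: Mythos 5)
Your proof is correct and follows the same route the paper intends: the paper itself gives no detailed proof of Proposition~\ref{Prop: criterion}, instead presenting it as an immediate consequence of the Wedderburn decomposition $e_1'e_{\Gamma/\Gamma'}\Q[\Gamma]e_1'\cong\bigoplus_{i=2}^k M_{a_i}(D_i)$ stated just above, which is exactly what you use. The only spot worth tightening is the claim that a ring isomorphism $\fo\cong\Z_S$ "extends" to a $\Q$-algebra isomorphism after tensoring; it is cleaner to say that $\fo$ is a finitely generated $\Z_S$-module with $\Q\otimes_{\Z_S}\fo=e_1'e_{\Gamma/\Gamma'}\Q[\Gamma]e_1'$, so $\fo\cong\Z_S$ forces this algebra to be one-dimensional over $\Q$, while conversely the unique maximal $\Z_S$-order in $\Q$ is $\Z_S$ itself.
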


\begin{example}[$a_{\Gamma/\Gamma'}$ multiplicity $1$]
So if all the $a_i$ are $1$ and $D_i=K_i$ (i.e. all the Schur indices are 1), or equivalently, every absolutely irreducible character that appears in 
$a_{\Gamma/\Gamma'}$ appears with multiplicity $1$, then by Corollary~\ref{cor: o is a maximal order}, we have that 
$$
\fo \isom \bigoplus_{i=2}^k \Z_{K_i},
$$
where $\Z_{K_i}$ is the localization of the ring of algebraic integers of $K_i$ at by the non-zero rational integers not in $S.$

If in addition, all the decomposition groups $\Gamma_v$ are trivial for a Galois $\Gamma$-extension $L/K_0$, then for the associated $v_i$ for $L^{\Gamma'}$, we can compute using Theorem~\ref{thm: choice of v} that
$v_i=r_K l_i,$ where $r_K$ is the number of infinite places of $K$.
\end{example}

\begin{example}[An example on $S_n$] \label{Ex:Sn}
Even more specifically, we consider the case where $K|\Q$ is a non-Galois extension whose Galois closure $L|\Q$ is a $\Gamma=S_n$-field such that $K$ is the fixed field of $\Gamma'=S_{n-1}$ where $S_{n-1}\hookrightarrow S_n$ in the usual way. Moreover assume that $L|\Q$ is totally real, so $\underline{u}=\underline{1}$ by Theorem~\ref{thm:u in Cohen-Martinet}. Since $a_{\Gamma/\Gamma'}$ is absolutely irreducible with $a_{\Gamma/\Gamma'}\big(23\cdots(n-1)\big)=1$, we have
 \[a_{\Gamma/\Gamma'}=\frac{a_{\Gamma/\Gamma'}(1)}{\lvert\Gamma\rvert}\big((23\cdots(n-1))^{-1}+\cdots\big)=\frac{n-1}{n!}\big((23\cdots(n-1))^{-1}+\cdots\big).\]
 Also, for $p\nmid n!/(n-1)$, one can explicitly compute $e_{\Gamma/\Gamma'}\Z_{(p)}[\Gamma]=
M_{n-1}(\Z_{(p)}).$ 
Therefore $p$ is a good prime if and only if $p\nmid n!/(n-1)$. Let $S$ be the set of good primes for $e_{\Gamma/\Gamma'}$.
 
By Theorem~\ref{thm: choice of v} 
we  have
 \[\lvert H\rvert^v=\lvert H\rvert^{n-1}\]
where $n=\lvert\Gamma/\Gamma'\rvert$, i.e., $v=n-1$.
In this case, $\fo$ is just $\Z_S$.
 Hence we expect $\Cl_K^S$ to behave like a random abelian  group without any additional structure coming from the $\fo$ action, and the predictions have each finite abelian $\Z_S$-module $H$ appearing with probability $|H|^{-(n-1)}|\Aut(H)|^{-1}$ as $\Cl_K^S$.
\end{example}

\begin{example}[An example on $D_4$] \label{Ex:D4}
Let $\Gamma=D_4$, the dihedral group of order $8$ and $S$ only contain odd primes. Write $\Gamma=\langle\sigma,\tau\rangle$ with $\tau^2=\sigma^4=1$ and $\tau\sigma\tau^{-1}=\sigma^{-1}$. Let $K|\Q$ be a degree $4$ extension with Galois closure $L|\Q$ a totally real \(\Gamma\)-field such that $K$ is the fixed field of the subgroup $\Gamma'=\{1,\tau\}$ (so $\underline{u}=\underline{1}$ by Theorem~\ref{thm:u in Cohen-Martinet}). 

The character $a_{\Gamma/\Gamma'}$ is of degree $3$, the sum of two absolutely irreducible characters $\varphi$ of degree $1$, and $\chi$ of degree $2$. Let $e_\varphi$, resp. $e_\chi$, be the irreducible central idempotent in $\Q[\Gamma]$ associated to $\varphi$, resp. $\chi$. The idempotents are given by
$$e_\chi=\frac{1}{8}(1+\sigma^2-\sigma-\sigma^3+\tau+\sigma^2\tau-\sigma\tau-\sigma^3\tau)\quad\text{and}\quad e_\varphi=\frac{1}{2}(1-\sigma^2)$$
and $2$ is the only bad prime number for $e_{\Gamma/\Gamma'}$.

Since $\varphi$ is an absolutely irreducible character of degree $1$ and $e_1'\cdot e_\varphi=e_\varphi$, we then know that $e_\varphi e_1'\fo\cong\Z_S$. On the other hand, Frobenius reciprocity shows that $\dim_\Q e_1'e_\chi\Q[\Gamma]e_1'=1$, hence $e_\chi e_1'\fo$, as a maximal order in $e_1'e_\chi\Q[\Gamma]e_1'$, is also isomorphic to $\Z_S$. So $\fo =\Z_S^2$ as an algebra. 

On the other hand, the normalizer of $\Gamma'$ is $\{1,\tau,\sigma^2,\sigma^2\tau\}$, i.e., there exists $2$ automorphisms of $K|\Q$. In particular, the class group $\Cl_K^S$ is not only an abelian group but an abelian group with an order $2$ automorphism, i.e., $\Cl_K^S$ is a $\Z_S[t]/(t^2-1)$-module with $t\cdot x=\sigma^2\cdot x$. Moreover, one can check that the ring homomorphism $e_{\Gamma/\Gamma'}e_1'\fo\to\Z_S[t]/(t^2-1)$ given by 
$$e_\varphi e_1'\mapsto\dfrac{1}{2}(1+t)\quad\text{and}\quad e_\chi e_1'\mapsto\dfrac{1}{2}(1-t)$$
is an isomorphism which is compatible with the actions on class groups. So in this example, considering the $\fo$-module structure on $\Cl_K^S$ and the structure on $\Cl_K^S$ from the automorphisms of $K|\Q$ are equivalent.

We will also work out the predicted moments explicitly in this case.
Let $X=(e_{\Gamma/\Gamma'}\Q[\Gamma],\underline{1},e_{\Gamma/\Gamma'}\fO)$, and let $G$ be a finite $e_{\Gamma/\Gamma'}\fO$-module, and $H={}^{\Gamma'}G$. Then
 \[\mathbb{E}\left(\lvert\Sur_\fo({}^{\Gamma'}X,H)\rvert\right)=\mathbb{E}\left(\lvert\Sur_{e_{\Gamma/\Gamma'}\fO}(X,G)\rvert\right)=\frac{1}{|G|^{\uu}}=\frac{1}{|e_\varphi G|}\frac{1}{|e_\chi G|}.\]
Then using \eqref{E:utov}, we have
 \[\mathbb{E}\left(\lvert\Sur_\fo({}^{\Gamma'}X,H)\rvert\right)=\frac{1}{\lvert \frac{1+t}{2} H\rvert\lvert \frac{1-t}{2} H\rvert^2}.\]
\end{example}

\begin{example}[An Example on $A_5$]\label{Ex:A5}
This is an example where the non-Galois extension admits no ``automorphism'' but the ring $\fo$ is nontrivial.

Let $\Gamma=A_5$.  The subgroup $\Gamma'$ generated by $(123)$ and $(12)(45)$ is called the twisted $S_3$ in $A_5$ because this subgroup is isomorphic to $S_3$. It is a maximal proper subgroup of $A_5$. Since $\Gamma$ is simple, this says that the normalizer of $\Gamma'$ is itself. 

Now assume that $K|\Q$ is a non-Galois extension with Galois closure a \(\Gamma\)-field $L|\Q$ such that $K=L^{\Gamma'}$. Since automorphisms of $K$ over $\Q$ correspond to $\Gamma'$ cosets of elements $\sigma\in\Gamma$ such that $\sigma\Gamma'\sigma^{-1}=\Gamma'$, then we can see that $K$ admits no nontrivial automorphism. 

The character $r_{\Gamma/\Gamma'}$ is given by a $\Q$-representation of dimension $10$. By checking the character table, $\Gamma$ has $4$ characters over $\Q$. Note that there is a unit character contained in $r_{\Gamma/\Gamma'}$. The character $r_{\Gamma/\Gamma'}$ contains three different absolutely irreducible characters, say $r_{\Gamma/\Gamma'}=\chi_1+\chi_2+\chi_3$ where $\chi_1$ is the unit character, $\chi_2$ is the character of degree $4$ and $\chi_3$ is the character of degree $5$. By Theorem \ref{theorem: equivalence of categories}, this implies that $\fo$ admits two orthogonal irreducible idempotents, hence cannot be isomorphic to $\Z_S$. By computations using Frobenius reciprocity, we can see that $e_1'e_i\Q[\Gamma]e_1'$ is a one-dimensional $\Q$-vector space where $e_i$ is the irreducible central idempotent associated to $\chi_i$, for $i=2,3$. Therefore the ring $\fo$ is isomorphic to $\Z^2_S$. Moreover,
we can check that a prime number $p$ is good for $e_{\Gamma/\Gamma'}$ if and only if $p\neq2,3,5$, i.e., $p\nmid\lvert\Gamma\rvert$. So for a set $S$ of good primes, the class group $\Cl_K^S$ has a natural order $2$ automorphism (from $(1,-1)\in\Z_S^2$) and the conjectures reflect this structure.
\end{example}

\section{Independence of Galois field}\label{S:ind}

Though we imagine the reader was thinking of $L$ as the Galois closure of $K$ in the last two sections, that was never strictly required.  It could have also been a larger Galois extension.  In fact, we could have even considered $\Gamma'$ normal so that $K/K_0$ was Galois.  With this realization, we see that the Cohen-Martinet heuristics make several (infinitely many) predictions for the averages of the the same class groups (though each prediction is with a different ordering of the fields, since the conjectures as worded are always ordered by the discriminants of the Galois fields).  In this section, we show that all those predictions agree.

We starting by showing that  $\uv$ does not depend on the choice of the Galois extension $L|K_0$ containing $K|K_0$ (see the explicit statement below). We start with a lemma, whose proof is straightforward.

\begin{lemma}\label{L:twogroups} If $\Gamma'\subseteq\Gamma$ is a normal subgroup, then $e_1'$ is central in $\Q[\Gamma]$ and
 \[(e_1+e_{\Gamma/\Gamma'})\Q[\Gamma]\cong e_1'\Q[\Gamma]\cong\Q[\Gamma/\Gamma'].\]
In particular, if we let $\bar{e}_1$  be the irreducible central idempotent in $\Q[\Gamma/\Gamma']$ associated to the unit character of $\Gamma/\Gamma'$, then the maximal order $\fo$ of $e_1'e_{\Gamma/\Gamma'}\Q[\Gamma]$ is isomorphic to a maximal order in $(1-\bar{e}_1)\Q[\Gamma/\Gamma']$.
\end{lemma}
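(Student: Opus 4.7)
The plan is to verify centrality of $e_1'$, then identify $e_1'$ with $e_1 + e_{\Gamma/\Gamma'}$ as central idempotents of $\Q[\Gamma]$, and finally translate the Hecke algebra $e_1' e_{\Gamma/\Gamma'} \Q[\Gamma] e_1'$ through the isomorphism $e_1'\Q[\Gamma] \cong \Q[\Gamma/\Gamma']$.

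First I would observe that because $\Gamma'$ is normal, conjugation by any $\sigma\in\Gamma$ permutes $\Gamma'$, so $\sigma e_1' \sigma^{-1} = e_1'$, which gives that $e_1'$ is central in $\Q[\Gamma]$. Next, the natural surjection $\Q[\Gamma]\twoheadrightarrow\Q[\Gamma/\Gamma']$ factors through multiplication by $e_1'$: since $\tau e_1' = e_1'$ for all $\tau\in\Gamma'$, the map $\sigma \mapsto \sigma e_1'$ descends to a well-defined $\Q$-algebra map $\Q[\Gamma/\Gamma']\to e_1'\Q[\Gamma]$, which is clearly surjective and has the right dimension, giving $e_1'\Q[\Gamma]\cong\Q[\Gamma/\Gamma']$.

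For the identification $e_1' = e_1 + e_{\Gamma/\Gamma'}$, I would argue that since $e_1'$ is a central idempotent of $\Q[\Gamma]$, it is a sum of some subset of the irreducible central idempotents $e_i$: indeed, $e_ie_1'$ is a central idempotent of the simple algebra $e_i\Q[\Gamma]$, hence equal to $0$ or $e_i$. By Lemma \ref{lem: Frobenius reciprocity}, the $e_i$ for which $e_ie_1'\neq 0$ are exactly $e_1$ together with the irreducible summands of $e_{\Gamma/\Gamma'}$, yielding $e_1' = e_1 + e_{\Gamma/\Gamma'}$ and thus the first displayed isomorphism.

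Finally, since $e_1'$ is central and $e_{\Gamma/\Gamma'}\cdot e_1' = e_{\Gamma/\Gamma'}$ (by the identification just made), we have
\[
e_1' e_{\Gamma/\Gamma'}\Q[\Gamma] e_1' \;=\; e_{\Gamma/\Gamma'}\Q[\Gamma].
\]
Under the isomorphism $e_1'\Q[\Gamma]\cong \Q[\Gamma/\Gamma']$, the idempotent $e_1$ corresponds to $\bar e_1$ (both are the central idempotent cutting out the trivial representation), so $e_{\Gamma/\Gamma'} = e_1' - e_1$ corresponds to $1 - \bar e_1$. Hence $e_{\Gamma/\Gamma'}\Q[\Gamma] \cong (1-\bar e_1)\Q[\Gamma/\Gamma']$, and $\fo$, as a maximal order in the left-hand side, is isomorphic to a maximal order in $(1-\bar e_1)\Q[\Gamma/\Gamma']$. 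No step poses a real obstacle; the only place requiring care is invoking Lemma \ref{lem: Frobenius reciprocity} to match $e_1'$ with $e_1 + e_{\Gamma/\Gamma'}$ rather than verifying this by direct computation with characters.
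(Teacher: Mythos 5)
Your proof is correct and follows essentially the route the authors had in mind (the paper declares the lemma's proof ``straightforward'' and omits it). Every step checks out: normality of $\Gamma'$ makes $e_1'$ central by direct conjugation; the map $\sigma\Gamma'\mapsto\sigma e_1'$ is well-defined because $\tau e_1'=e_1'$ for $\tau\in\Gamma'$ and is a bijection by the dimension count $\dim_\Q e_1'\Q[\Gamma]=|\Gamma/\Gamma'|$; and since $e_1'$ is a central idempotent of $\Q[\Gamma]$, each $e_ie_1'$ is a central idempotent of the simple algebra $e_i\Q[\Gamma]$, hence $0$ or $e_i$, which combined with Lemma~\ref{lem: Frobenius reciprocity} (and Remark~\ref{remark: central idempotent associated to a character} for the identification $e_{\Gamma/\Gamma'}=e_2+\cdots+e_k$) forces $e_1'=e_1+e_{\Gamma/\Gamma'}$. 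Your choice to derive this identity from Lemma~\ref{lem: Frobenius reciprocity} rather than computing the character $r_{\Gamma/\Gamma'}$ directly is a mild economy: it keeps the argument uniform with the non-normal case and reuses machinery the paper already established, whereas a direct computation would note that $r_{\Gamma/\Gamma'}(\sigma)=|\Gamma/\Gamma'|$ for $\sigma\in\Gamma'$ and $0$ otherwise (using normality) and match the associated idempotent with $e_1'$ by inspection. The final step---observing $e_1'e_{\Gamma/\Gamma'}\Q[\Gamma]e_1'=e_{\Gamma/\Gamma'}\Q[\Gamma]$ and that $e_1\leftrightarrow\bar e_1$ under the isomorphism, so $e_{\Gamma/\Gamma'}\leftrightarrow 1-\bar e_1$---is exactly the content needed to transport $\fo$ to a maximal order in $(1-\bar e_1)\Q[\Gamma/\Gamma']$.
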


\cut{
\begin{proof}
Since $\Gamma'$ is normal, we then know that if $\{\sigma_1,\dots,\sigma_n\}$ is a fixed set of representatives of $\Gamma/\Gamma'$, then the morphism $e_1'\Q[\Gamma]\to\Q[\Gamma/\Gamma']$ given by
 \[\sum_{i=1}^na_{\sigma_i}\sigma_ie_1'\mapsto\sum_{i=1}^na_{\sigma_i}\sigma_i\Gamma'\]
is an isomorphism of $\Q$-algebras. 

On the other hand, we'll show that $e_1'$ is exactly $e_1+e_{\Gamma/\Gamma'}$. The central idempotent $e_1+e_{\Gamma/\Gamma'}$ is associated to the character $r_{\Gamma/\Gamma'}$, which is the character of $\bigoplus_{i=1}^n\sigma_iV$ where $V$ is the trivial representation of $\Gamma'$. By $\sigma\sigma_iV=\sigma_iV$ if and only if $\sigma\in\Gamma'$ for all $i=1,\dots,n$, we know that 
 $$r_{\Gamma/\Gamma'}(\sigma)=\left\{\begin{aligned}
 &n=\lvert\Gamma/\Gamma'\rvert\quad&\text{if }\sigma\in\Gamma'\\
 &0\quad&\text{otherwise.}
 \end{aligned}\right.$$
Since
 \[\frac{\lvert\Gamma/\Gamma'\rvert}{\lvert\Gamma\rvert}\sum_{\sigma\in\Gamma}r_{\Gamma/\Gamma'}(\sigma)=\frac{1}{\lvert\Gamma'\rvert}\sum_{\tau\in\Gamma'}\tau=e_1'\]
already gives us a central idempotent, this implies that $e_1+e_{\Gamma/\Gamma'}=e_1'$. Then the first isomorphism $(e_1+e_{\Gamma/\Gamma'})\Q[\Gamma]\cong e_1'\Q[\Gamma]$ is clear.

The statement on maximal orders is just a direct consequence of the isomorphism of the $\Q$-algebras.
\end{proof}}

\begin{theorem} Let $K|K_0$ be any finite extension with Galois closure a \(\Gamma\)-extension $L|K_0$ of rank $\uu\in\Q^{m-1}$ such that \(\Gal(L|K)\cong\Gamma\). Let $M|K_0$ be a \(\Sigma\)-extension of rank $\underline{w}\in\Q^{n-1}$ such that $L\subseteq M$ with $\Gal(M|L)\cong\Delta$, and $\Gal(M|K)\cong\Sigma'$. If $S$ only contains good primes for $e_{\Gamma/\Gamma'}\in\Q[\Gamma]$ and $e_{\Sigma/\Sigma'}\in\Q[\Sigma]$, then the rank $\uv$ of $L^{\Gamma'}|K_0$ and the rank $\tilde{\uv}$ of $M^{\Sigma'}|K_0$ are the same. Moreover, ${}^{\Gamma'}(e_{\Gamma/\Gamma'}\fO)$ is isomorphic to ${}^{\Sigma'}(e_{\Sigma/\Sigma'}\widetilde{\fO})$ where $\fO$, resp. $\widetilde{\fO}$, is a maximal $\Z_S$-order in $\Q[\Gamma]$, resp. in $\Q[\Sigma]$ provided that the embedding $\Q[\Gamma]\to\Q[\Sigma]$ defined by 
 $$\gamma\mapsto\sigma\sum_{\delta\in\Delta}\delta$$
where $\gamma$ is the image of $\delta$ under the surjective map $\Sigma\to\Gamma$, maps $\fO$ into $\widetilde{\fO}$.
\end{theorem}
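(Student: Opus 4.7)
The plan is to exploit the surjection $\Sigma \twoheadrightarrow \Gamma$ with kernel $\Delta$ to identify $\Q[\Gamma]$ with the two-sided ideal $e_\Delta\Q[\Sigma] \subseteq \Q[\Sigma]$, where $e_\Delta = \frac{1}{|\Delta|}\sum_{\delta\in\Delta}\delta$ is central because $\Delta$ is normal. This gives a $\Q$-algebra isomorphism $\Phi\colon \Q[\Gamma] \stackrel{\sim}{\to} e_\Delta\Q[\Sigma]$, $\gamma \mapsto \sigma e_\Delta$ (for any lift $\sigma$); the ``embedding'' in the theorem statement is $|\Delta|\cdot\Phi$, scaled precisely so that $\Z_S[\Gamma]$ lands in $\Z_S[\Sigma]$ rather than merely in $e_\Delta\Z_S[\Sigma]$. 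The main structural observation is that under $\Phi$, the central idempotent $e_{\Gamma/\Gamma'}$ corresponds to $e_{\Sigma/\Sigma'}$: since $\Sigma/\Sigma'\cong\Gamma/\Gamma'$ as $\Sigma$-sets, the permutation character $r_{\Sigma/\Sigma'}$ is the inflation of $r_{\Gamma/\Gamma'}$, and hence $a_{\Sigma/\Sigma'}$ is the inflation of $a_{\Gamma/\Gamma'}$. A direct computation with the formula $e_\chi = \frac{\chi(1)}{|G|}\sum_g\chi(g^{-1})g$ and the identity $|\Sigma|=|\Delta||\Gamma|$ verifies that $\Phi$ sends the central idempotent of each $\Q$-irreducible character of $\Gamma$ to the central idempotent of its inflation, so in particular $\Phi(e_{\Gamma/\Gamma'}) = e_{\Sigma/\Sigma'}$.

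Once this is established, $\Phi$ restricts to a $\Q$-algebra isomorphism $e_{\Gamma/\Gamma'}\Q[\Gamma] \cong e_{\Sigma/\Sigma'}\Q[\Sigma]$, and because maximal $\Z_S$-orders decompose over simple components, the hypothesis that $|\Delta|\Phi$ sends $\fO$ into $\widetilde\fO$ ensures compatibility: $\Phi(e_{\Gamma/\Gamma'}\fO)$ is a maximal $\Z_S$-order in $e_{\Sigma/\Sigma'}\Q[\Sigma]$ contained (up to the factor $|\Delta|$) in $e_{\Sigma/\Sigma'}\widetilde\fO$, and checking prime-by-prime using Theorem~\ref{theorem: equivalence of categories} forces equality $\Phi(e_{\Gamma/\Gamma'}\fO) = e_{\Sigma/\Sigma'}\widetilde\fO$ as orders in the same algebra. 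The kernel $\Delta$ of $\Sigma' \twoheadrightarrow \Gamma'$ acts trivially on $e_\Delta\Q[\Sigma]$ (since $\delta e_\Delta = e_\Delta$ for all $\delta\in\Delta$), so the left $\Gamma'$-action on $e_{\Gamma/\Gamma'}\fO$ is intertwined by $\Phi$ with the left $\Sigma'$-action on $e_{\Sigma/\Sigma'}\widetilde\fO$, yielding the claimed isomorphism ${}^{\Gamma'}(e_{\Gamma/\Gamma'}\fO) \cong {}^{\Sigma'}(e_{\Sigma/\Sigma'}\widetilde\fO)$.

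For the ranks, I would write $\tilde\varphi_i$ for the inflation of $\varphi_i$ to $\Sigma$. The correspondence $\varphi_i \leftrightarrow \tilde\varphi_i$ matches simple components of $e_{\Gamma/\Gamma'}\Q[\Gamma]$ and $e_{\Sigma/\Sigma'}\Q[\Sigma]$, so in particular $\tilde h_i = h_i$; moreover $\langle \tilde\varphi_i, a_\Sigma\rangle_\Sigma = \tilde\varphi_i(1) = \varphi_i(1) = \langle \varphi_i, a_\Gamma\rangle_\Gamma$, and by inflation $\langle\tilde\varphi_i, a_{\Sigma/\Sigma'}\rangle_\Sigma = \langle\varphi_i, a_{\Gamma/\Gamma'}\rangle_\Gamma$. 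It thus suffices to show that the $u_i$ for $L|K_0$ and the corresponding $w_i$ for $M|K_0$ agree on the relevant coordinates. For each infinite place $v$ of $K_0$, choosing compatible places above $v$ makes $\Sigma_v \twoheadrightarrow \Gamma_v$ surjective with kernel $\Sigma_v\cap\Delta$, so Frobenius reciprocity together with the inflation identity for $\tilde\varphi_i$ gives $\langle\tilde\varphi_i, \Ind_{\Sigma_v}^\Sigma 1\rangle_\Sigma = \frac{1}{|\Sigma_v|}\sum_{\sigma\in\Sigma_v}\varphi_i(\bar\sigma) = \frac{1}{|\Gamma_v|}\sum_{\gamma\in\Gamma_v}\varphi_i(\gamma) = \langle\varphi_i, \Ind_{\Gamma_v}^\Gamma 1\rangle_\Gamma$. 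Summing over $v$ and noting that the $-1$ terms in $\chi_L$ and $\chi_M$ pair trivially with $\varphi_i$ and $\tilde\varphi_i$ (both of which are nontrivial, as $i\geq 2$) gives $w_i = u_i$, hence $\tilde v_i = v_i$ by the formula in Theorem~\ref{thm: choice of v}.

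The main obstacle is the bookkeeping needed to verify $\Phi(e_{\Gamma/\Gamma'}) = e_{\Sigma/\Sigma'}$ and the matching of maximal orders cleanly: morally this is just the statement that inflation of characters is compatible with forming central idempotents, but one must track the factors of $|\Delta|$ and $|\Sigma|/|\Gamma|$ carefully (and in particular reconcile the $|\Delta|$-scaling in the hypothesized embedding with the fact that $p$ may well divide $|\Delta|$). Once these identifications are established, the remaining steps are largely formal consequences of the semisimple structure theory and the Morita-equivalence framework developed in Section~\ref{S:Re}.
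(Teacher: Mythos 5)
Your proof is essentially correct and arrives at the same identifications the paper uses (the isomorphism $\Q[\Gamma]\cong F_1\Q[\Sigma]$ for $F_1=\frac{1}{|\Delta|}\sum_{\delta\in\Delta}\delta$ given by Lemma~\ref{L:twogroups}, and the compatibility of $\Gamma'$- and $\Sigma'$-invariants under it), but the route to the rank equality is genuinely different.  For the claim $w_i=u_i$, the paper argues indirectly: it chooses maximal orders with $E_{\Sigma/\Delta}\widetilde\fO\cong(1-e_1)\fO$, observes for corresponding finite modules $G$ and $H$ that ${}^{\Sigma_v}G={}^{\Sigma_v\Delta}G={}^{\Gamma_v}H$ since $\Delta$ acts trivially, and then invokes Theorem~\ref{thm:u in Cohen-Martinet} (which expresses $|H|^{\uu}$ in terms of $\prod_v|H^{\Gamma_v}|$) to conclude.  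You instead prove $w_i=u_i$ by a direct Frobenius-reciprocity computation with the inflated characters $\tilde\varphi_i$, using the surjection $\Sigma_v\twoheadrightarrow\Gamma_v$ with kernel $\Sigma_v\cap\Delta$.  Your argument is more elementary and self-contained (it does not route through Theorem~\ref{thm:u in Cohen-Martinet}), and it also makes explicit the auxiliary identities $\langle\tilde\varphi_i,a_\Sigma\rangle=\langle\varphi_i,a_\Gamma\rangle$ and $\langle\tilde\varphi_i,a_{\Sigma/\Sigma'}\rangle=\langle\varphi_i,a_{\Gamma/\Gamma'}\rangle$ needed to pass from $w_i=u_i$ to $\tilde v_i=v_i$ via Theorem~\ref{thm: choice of v}; the paper leaves this last passage implicit.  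For the isomorphism of the orders $\fo\cong\tilde\fo$, you emphasize the idempotent compatibility $\Phi(e_{\Gamma/\Gamma'})=e_{\Sigma/\Sigma'}$ under inflation and the triviality of the $\Delta$-action, while the paper instead computes directly with the Hecke algebras $E_1'\Q[\Sigma]E_1'$ using Lemma~\ref{L:twogroups}; these are two presentations of the same phenomenon.

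One caveat: you flag but do not fully resolve the issue that the hypothesized embedding is $|\Delta|\cdot\Phi$, so that ``$\fO$ lands in $\widetilde\fO$'' only gives $|\Delta|\Phi(\fO)\subseteq\widetilde\fO$ rather than $\Phi(\fO)\subseteq F_1\widetilde\fO$ when $p\mid|\Delta|$ for some $p\in S$.  The paper's proof is equally terse on this point, simply asserting that $\fO\cong(E_1+E_{\Sigma/\Delta})\widetilde\fO$ follows.  So this is not a gap introduced by your argument; it is a point of delicacy in the statement itself that both proofs treat the same way.
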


\begin{proof}
We use $E$ for central idempotents in $\Q[\Sigma]$ and $e$ for the ones in $\Q[\Gamma]$. For example let $e_1':=\dfrac{1}{\lvert\Gamma'\rvert}\sum_{\gamma\in\Gamma'}\gamma$, $E_1':=\dfrac{1}{\lvert\Sigma'\rvert}\sum_{\sigma\in\Sigma'}\sigma$. Moreover let $F_1:=\dfrac{1}{\lvert\Delta\rvert}\sum_{\delta\in\Delta}\delta$.
Note that $E_1'\cdot F_1=F_1\cdot E_1'=E_1'$. By Lemma~\ref{L:twogroups}, we have
 \[\begin{aligned}
 E_1'\Q[\Sigma]E_1'&=E_1'F_1\Q[\Sigma]E_1'=\dfrac{1}{\lvert\Sigma'\rvert}\sum_{\sigma\in\Sigma'}\sigma\cdot\Q[\Sigma/\Delta]E_1'\\
 &=\frac{1}{\lvert\Sigma'/\Delta\rvert}\sum_{\sigma\Delta\in\Sigma'/\Delta}\sigma\Delta\cdot\Q[\Sigma/\Delta]E_1'\cong e_1'\Q[\Gamma]e_1'
 \end{aligned}\]
This computation shows that ${}^{\Gamma'}(e_{\Gamma/\Gamma'}\fO)$ is equivalent to ${}^{\Sigma'}(e_{\Sigma/\Sigma'}\widetilde{\fO})$, because they are both maximal orders in $e_1'\Q[\Gamma]e_1'$. Moreover, if the embedding $\Q[\Gamma]\hookrightarrow\Q[\Sigma]$ sends $\fO$ into $\widetilde{\fO}$, then by the isomorphism in Lemma~\ref{L:twogroups} $\Q[\Gamma]\cong(E_1+E_{\Sigma/\Delta})\Q[\Sigma]$ which is induced by the embedding, we know that $\fO\cong(E_1+E_{\Sigma/\Delta})\widetilde{\fO}$, hence the isomorphism ${}^{\Gamma'}(e_{\Gamma/\Gamma'}\fO)\cong{}^{\Sigma'}(e_{\Sigma/\Sigma'}\widetilde{\fO})$.

Note that by Lemma~\ref{L:twogroups}, $E_{\Sigma/\Delta}\Q[\Sigma]\cong(1-e_1)\Q[\Gamma]$, and they have same number of irreducible components whose correspondence is given by $E\mapsto EF_1$ for all irreducible central idempotents $E\in\Q[\Sigma]$. Assume without loss of generality that $E_{\Sigma/\Delta}=E_2+\cdots+E_m$ in $\Q[\Sigma]$ and $E_iF_1=e_i$ for all $i=2,\dots,m$.

Claim: $w_i=u_i$ for all $i=2,\dots,m$. Let's prove the claim. Let $\Sigma_v\subseteq\Sigma$ be any decomposition group of some infinite place $v|\infty$ of $K_0$ defined up to conjugacy. Note that the ranks $\uu$ and $\underline{w}$ do not depend on the choice of the maximal orders. We may assume without loss of generality that $E_{\Sigma/\Delta}\widetilde{\fO}\cong(1-e_1)\fO$. Let $G$ be any finite $E_{\Sigma/\Delta}\widetilde{\fO}$-module and $H$ the corresponding $(1-e_1)\fO$-module under the isomorphism of maximal orders. Since $G$ is fixed by $\Delta$, hence a $\Sigma/\Delta$-module, and we can take $\Gamma_v$ to be the image of $\Sigma_v$ under the surjective map $\Sigma\to\Gamma$, and obtain
 $$\lvert{}^{\Sigma_v}G\rvert=\lvert{}^{\Sigma_v\Delta}G\rvert=\lvert{}^{\Gamma_v}H\rvert.$$
By Theorem \ref{thm:u in Cohen-Martinet}, we know that the claim is true. 

Then by the interpretation of $\uv$ for non-Galois case and the fact that we can choose the maximal orders such that $E_{\Sigma/\Delta}\widetilde{\fO}\cong(1-e_1)\fO$, we know that the computation of the rank $\uv$ of $K|K_0$ can always be reduced to its Galois closure $L|K_0$, i.e., the rank $\uv$ of $K|K_0$ is a property of $K$ and the distribution of the random $\fo$-module $Y=(e_1'e_{\Gamma/\Gamma'}\Q[\Gamma]e_1',\uv,\fo)$ does not depend on the choice of the Galois extension $M|K_0$ containing $K$.
\end{proof}

\subsection*{Acknowledgements} 
The authors would like to thank Joseph Gunther, Yuan Liu, and Jiuya Wang  for useful conversations related to this work.  We would like to thank Alex Bartel for numerous comments that helped improve the exposition of this paper.
This work was done with the support of an American Institute of Mathematics Five-Year Fellowship, a Packard Fellowship for Science and Engineering, a Sloan Research Fellowship, and National Science Foundation grants DMS-1301690 and DMS-1652116.

\newcommand{\etalchar}[1]{$^{#1}$}

\end{document}